\documentclass[11pt]{amsart}
\usepackage{amssymb,latexsym,amsmath,graphicx,graphics,epic,eepic,enumerate}

\addtolength{\oddsidemargin}{-2pc}
\addtolength{\evensidemargin}{-2pc}
\addtolength{\textwidth}{4pc}

\theoremstyle{plain}
\newtheorem{theorem}{Theorem}
\numberwithin{theorem}{section}
\newtheorem{lemma}[theorem]{Lemma}
\newtheorem{proposition}[theorem]{Proposition}

\theoremstyle{definition}
\newtheorem{definition}[theorem]{Definition}
\newtheorem{defn}[theorem]{Definition}

\newtheorem{remark}[theorem]{Remark}
\newtheorem{rmk}[theorem]{Remark}

\newcommand{\C}{{\mathbb C}}
\newcommand{\R}{{\mathbb R}}
\newcommand{\Z}{{\mathbb Z}}
\newcommand{\Q}{{\mathbb Q}}
\renewcommand{\P}{{\mathbb P}}
\newcommand{\s}{{\mathbb S}}

\def\C{\mathbb{C}}

\def\Q{\mathbb{Q}}
\def\R{\mathbb{R}}
\def\Z{\mathbb{Z}}
\def\CP{\mathbb{CP}}

\def\tl{\triangleleft}
\def\w{\omega}

              \newcommand{\E}{{\mathcal E}}


\begin{document}
\title{Symplectic rational $G$-surfaces and equivariant symplectic cones}
\author{Weimin Chen, Tian-Jun Li, and Weiwei Wu}

\AtEndDocument{\bigskip{\footnotesize
  \textsc{Weimin Chen, Department of Mathematics, Unviersity of Massachusetts at Amherst, Amherst, MA 01003} \par
  \textit{E-mail address}: \texttt{wchen@math.umass.edu} \par

}\bigskip{\footnotesize
  \textsc{Tian-Jun Li, School of Mathematics,
      University of Minnesota,
      Minneapolis, MN 55455} \par
  \textit{E-mail address}: \texttt{tjli@math.umn.edu} \par

}\bigskip{\footnotesize
  \textsc{Weiwei Wu, Department of Mathematics,
      University of Georgia,
      Athens, GA 30606} \par
  \textit{E-mail address}: \texttt{weiwei@math.uga.edu} \par

}}

\subjclass[2000]{}
\keywords{}
\thanks{}
\date{\today}
\maketitle

\begin{abstract}
We give characterizations of a finite group $G$ acting symplectically on a rational surface 
($\C\P^2$ blown up at two or more points).  In particular, we obtain a symplectic 
version of the dichotomy of $G$-conic bundles versus $G$-del Pezzo surfaces for the corresponding 
$G$-rational surfaces, analogous to a classical result in algebraic geometry. Besides the characterizations 
of the group $G$ (which is completely determined for the case of $\CP^2\# N\overline{\CP^2}$, 
$N=2,3,4$), we also investigate the equivariant symplectic minimality and equivariant symplectic 
cone of a given $G$-rational surface.
\end{abstract}


\section{Introduction}

In this paper we study symplectic $4$-manifolds $(X,\omega)$ equipped with a finite
symplectomorphism group $G$, where $X$ is diffeomorphic to a rational surface.  
We shall call such a pair, i.e., $((X,\omega),G)$, a {\bf symplectic rational $G$-surface}.

They are the symplectic analog of \textbf{(complex) rational $G$-surfaces} studied in algebraic geometry, which are rational surfaces equipped with a holomorphic $G$-action.  These rational 
$G$-surfaces played a central role in the classification of finite subgroups of the plane 
Cremona group, a problem dating back to the early 1880s, see \cite{DI}. 

Note that any rational $G$-surface can be regarded
as a symplectic rational $G$-surface -- simply endowing it with a $G$-invariant K\"{a}hler form
which always exists. Our work shows that a large part of the story regarding the classification of
rational $G$-surfaces can be recovered by techniques from $4$-manifold theory and symplectic topology. Furthermore, we also add some new, interesting symplectic geometry aspect to the
study of rational $G$-surfaces; in particular, in regard to the equivariant symplectic minimality and 
equivariant symplectic cone of the underlying smooth action of a rational $G$-surface.
In addition, we also obtain some result which does not seem previously known in the algebraic geometry literature (cf. Theorem \ref{t:sixPoints}).

We begin with a discussion on the notion of minimality (i.e., $G$-minimality) in the equivariant context. 
Let $(X,\omega)$ be a symplectic $4$-manifold with a finite symplectomorphism group $G$. Suppose there
exists a $G$-invariant set of disjoint union of symplectic $(-1)$-spheres in $X$. Then blowing
down $X$ along the $(-1)$-spheres gives rise to a symplectic $4$-manifold 
$(X^\prime,\omega^\prime)$, which can be arranged so that $G$ is natually isomorphic to
a finite symplectomorphism group of $(X^\prime,\omega^\prime)$. The symplectic $G$-manifold
$X$ is called {\it minimal} if no such set of $(-1)$-spheres exists. It was shown in \cite{C4} that when 
$X$ is neither rational nor ruled, the symplectic $G$-manifold is minimal if and only if the underlying 
smooth manifold is minimal. However, in the case considered in the present paper, the underlying 
rational surface is often not minimal even though the corresponding symplectic rational $G$-surface 
is minimal. Furthermore, it is not known whether the notion of $G$-minimality is the same for the various
different categories, i.e., the holomorphic, symplectic, or smooth categories. In general it is a difficult problem
to establish the equivalence of $G$-minimality in the different categories, and we refer the reader to \cite{C4}
for a more thorough discussion on this topic. For our purpose in this paper, it suffices to only study 
minimal symplectic rational $G$-surfaces.

The most fundamental problem in our study is to classify symplectic rational $G$-surfaces up to equivariant 
symplectomorphisms. However, work in \cite{C1,C2,C3,C4} showed that even in the simple case where $X$ 
is $\C\P^2$ or a Hirzebruch surface and $G$ is a cyclic or meta-cyclic group, such a classification is already 
quite involved. In fact, in one circumstance where $G$ is meta-cyclic,  a weaker classification, i.e., classification up to equivariant diffeomorphisms, still remains open. 

With the preceding understood, the main objectives of this paper are more basic: for symplectic rational $G$-surfaces $X$ in general, we would like to

\begin{enumerate}[(1)]
  \item classify the possible symplectic structures;
  \item describe the induced action of $G$ on $H_2(X)$;
  \item give a list of possible finite groups for $G$;
  \item understand the equivariant minimality and equivariant symplectic cones.
\end{enumerate}

These problems, however, are still highly non-trivial and not completely settled.  In particular, 
part of our determination of $G$ and the induced action on $H_2(X)$ relies on the 
Dolgachev-Iskovskikh's solution of the corresponding problems in algebraic geometry, with new inputs from Gromov-Witten theory and a detailed analysis of the symplectic structures.

\subsection{The setup}
In this paper, we shall be focusing on the case where the rational surface, denoted by $X$, is 
$\C\P^2$ blown up at $2$ or more points. More concretely, we shall consider minimal symplectic rational $G$-surfaces $(X,\omega)$ where $X=\C\P^2\# N\overline{\C\P^2}$, for $N\geq 2$. (Note that the minimality assumption implies in particular that $G$ is a nontrivial group.) The case where the rational surface is $\C\P^2$ or a Hirzebruch surface had been previously studied, cf. \cite{C1,C2,C3,C4}; we point out that the $J$-holomorphic curve 
techniques employed in this paper are drastically different in flavor from those developed in these previous works.

For convenience, we shall fix some notations and terminology, which will be frequently used throughout the paper.
We will denote by $H, E_1,E_2,\cdots, E_N$ a basis of $H^2(X,\Z)$, under which the intersection matrix takes its
standard form, i.e., $H^2=1$, $E_i^2=-1$, $H\cdot E_i=0$, $\forall i$, and $E_i\cdot E_j=0, \forall i\neq j$.
The canonical class of $(X,\omega)$ will be denoted by $K_\omega\in H^2(X)$, in order to emphasize its
dependence on the symplectic form $\omega$. Another frequently used notation is $H^2(X)^G$, which denotes the
subset of $H^2(X,\Z)$ consisting of elements fixed under the induced action of $G$, and is called the {\it invariant lattice}. 

Recall that a symplectic rational surface $(X,\omega)$ is called {\it monotone} if $K_\omega=\lambda [\omega]$ is satisfied 
in $H^2(X;\R)$ for some $\lambda\in\R$. In this case, we have $\lambda<0$, and $N$ must be in the range $N\leq 8$. Such
a symplectic rational surface is the symplectic analog of Del Pezzo surface in algebraic geometry. Another important notion,
given in the following definition and called a {\it symplectic $G$-conic bundle}, corresponds to a conic bundle structure 
on a rational $G$-surface in algebraic geometry.

\begin{definition}
Let $(X,\omega)$ be a symplectic $4$-manifold equipped with a finite symplectomorphism 
group $G$. A symplectic $G$-conic bundle structure on $(X,\omega)$ is a genus-$0$ smooth Lefschetz fibration 
$\pi: X\rightarrow B$ which obeys
the following conditions:
\begin{itemize}
\item each singular fiber of $\pi$ contains exactly one critical point;
\item there exists a $G$-invariant, $\omega$-compatible almost complex structure $J$ such that the fibers of $\pi$ are $J$-holomorphic;
\item the group action of $G$ preserves the Lefschetz fibration.
\end{itemize}

\begin{rmk}\label{rem:}
    Although the above definition looks more rigid than it should be (in particular, it is always an almost complex fibration), Theorem \ref{t:X-structure} shows that this is a purely symplectic notion in the case of minimal symplectic rational $G$-surfaces.
\end{rmk}

A symplectic $G$-conic bundle is called {\it minimal} if for any singular fiber there is an element of
$G$ whose action switches the two components of the singular fiber.
\end{definition}

Here are some immediate consequences from the definition:
\begin{itemize}
\item $X$ is a rational surface if and only if $B=\s^2$; in this case, note that the number of singular fibers of $\pi$ equals
$N-1$, where $X=\C\P^2\# N\overline{\C\P^2}$;
\item the Lefschetz fibration is symplectic with respect to $\omega$;
\item the fiber class lies in the invariant lattice $H^2(X)^G$ as $G$ preserves the Lefschetz fibration;
\item if the underlying symplectic $G$-manifold is minimal, then the symplectic $G$-conic bundle must be also minimal.
\end{itemize}

\vspace{3mm}


\begin{figure}[]
  \centering
  \includegraphics[]{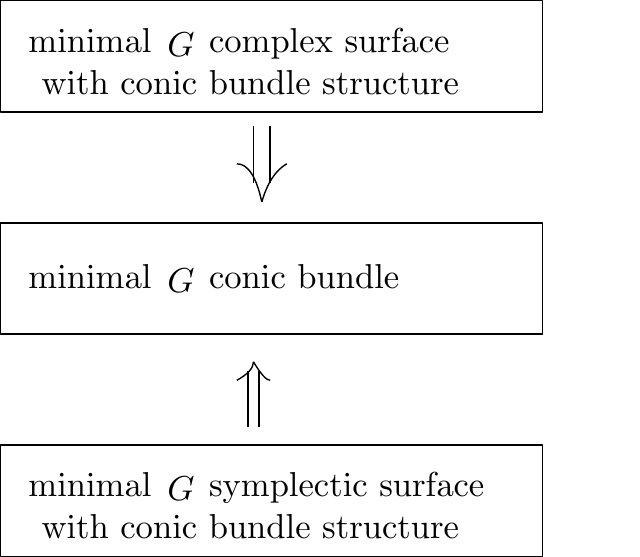}
  \label{fig:figure1}
\end{figure}

\subsection{The symplectic structures}

Our first theorem is concerned with the symplectic structure of a minimal symplectic rational $G$-surface.

\begin{theorem}\label{t:X-structure}
Let $(X,\omega)$ be a minimal symplectic rational $G$-surface, where 
$X=\C\P^2\# N\overline{\C\P^2}$ for some $N\geq 2$.
Then $N\neq 2$, and one of the following holds true:
\begin{itemize}
\item [{(1)}] The invariant lattice $H^2(X)^G$ has rank $1$. In this case, $3\leq N\leq 8$ and 
$(X,\omega)$ must
be monotone.
\item [{(2)}] The invariant lattice $H^2(X)^G$ has rank $2$.  In this case,  $N=5$  or $N\geq 7$, and   there exists a 
symplectic $G$-conic bundle structure on $((X,\omega),G)$.
\end{itemize}
\end{theorem}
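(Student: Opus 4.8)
The plan is to analyze the $G$-action on $H^2(X)$ via its action on the symplectic canonical class and the set of symplectic $(-1)$-sphere classes, then split according to the rank of the invariant lattice. First I would recall the fundamental fact from the theory of symplectic rational surfaces (Li--Liu, Taubes, etc.) that $K_\omega$ is $G$-invariant (it is a diffeomorphism invariant up to sign, and orientation/canonical-class considerations pin down the sign), so $K_\omega \in H^2(X)^G$, and moreover $K_\omega^2 = 9-N$. Since $G$ acts by isometries of the intersection form preserving $K_\omega$, it preserves the ``positive cone'' and the symplectic cone, and in particular it permutes the (finitely generated up to the Weyl group) structure of exceptional classes. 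The key dichotomy will come from looking at $K_\omega^\perp \cap H^2(X)^G$ inside the invariant lattice: if the invariant lattice has rank $1$ it must be spanned (rationally) by $K_\omega$, and then $G$-averaging any symplectic form shows $[\omega]$ is proportional to $K_\omega$, giving monotonicity; the constraint $N\le 8$ is then forced because a monotone symplectic rational surface has $K_\omega^2 = 9-N>0$, while $N\ge 2$ combined with minimality rules out $N=2$ (a minimal $G$-action on $\C\P^2\#2\overline{\C\P^2}$ would have to blow down equivariantly — here one uses that the two exceptional classes $E_1, E_2$ and the class $H-E_1-E_2$ are the only $(-1)$-classes and a finite group permuting them must fix one, contradicting minimality).

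For case (2), rank $2$, the plan is to produce the conic bundle structure by finding the fiber class. The invariant lattice has rank $2$ and contains $K_\omega$; I would argue that the orthogonal complement of $K_\omega$ inside $H^2(X)^G$ is rank $1$, spanned by a primitive class $F$ with $F^2 = 0$ (the intersection form restricted to the rank-$2$ invariant lattice is indefinite once $N$ is large enough, and one checks $F^2=0$ is forced by minimality — if $F^2>0$ we would be in a del Pezzo-type situation contradicting rank $2$, and $F^2<0$ combined with the light-cone structure is incompatible with $F\perp K_\omega$ and $K_\omega^2$ having the relevant sign). Having $F$ with $F^2=0$ and $F\cdot K_\omega = -2$ (the adjunction value for a sphere fiber), I would invoke Gromov--Witten / Taubes theory for symplectic rational surfaces to realize $F$ (or a multiple) by an $S^2$-family of $J$-holomorphic spheres for a generic $G$-invariant $J$, producing the genus-$0$ Lefschetz fibration $\pi: X\to S^2$; the $G$-invariance of $J$ makes $\pi$ $G$-equivariant, and the minimality of the $G$-action forces each singular fiber's two components to be swapped by some group element (otherwise blow down an invariant collection of $(-1)$-spheres). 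The number-of-singular-fibers count $N-1$ then pins down which $N$ occur, and the parity/arithmetic constraints (each swapped singular fiber contributes an even number, boundary classes must be consistent with $F^2=0$) yield exactly $N=5$ or $N\ge 7$ — here the exclusion of $N=3,4,6$ in the rank-$2$ case is the delicate arithmetic part, combining the intersection-form constraints on the two-dimensional invariant lattice with the requirement that $G$ act with no global fixed exceptional class.

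The main obstacle I expect is twofold: first, establishing that the class $F$ with $F^2=0$ is actually represented by a $J$-holomorphic fibration with controlled singular fibers (only nodal, one critical point each) and with the $G$-action genuinely permuting components in the minimal case — this requires care with the structure of $J$-holomorphic curves in negative-$K_\omega$ settings and an equivariant transversality or perturbation argument for $G$-invariant $J$; second, the combinatorial/arithmetic case analysis ruling out $N=2$ entirely, $N=3,4,6$ in case (2), and $N\ge 9$ in case (1). For the latter I would lean on the classification data of finite subgroups acting on the relevant lattices (essentially the Weyl group $W(E_N)$ picture) together with the constraint that the invariant lattice has the prescribed rank, echoing the Dolgachev--Iskovskikh dichotomy; the symplectic input (monotonicity in case (1), existence of the conic bundle in case (2)) is what upgrades a lattice-theoretic statement to a geometric one. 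I would organize the proof as: (a) $K_\omega$ is $G$-invariant and $K_\omega^2 = 9-N$; (b) rank $1$ $\Rightarrow$ monotone, and monotone $\Rightarrow N\le 8$; (c) $N\ne 2$; (d) rank $2$ $\Rightarrow$ existence of $F$, $F^2=0$, hence a conic bundle; (e) the conic bundle plus minimality plus lattice arithmetic $\Rightarrow N=5$ or $N\ge 7$.
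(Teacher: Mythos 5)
Your case (1) and the exclusion of $N=2$ are essentially sound and close to the paper's argument (the paper pins down the fixed class among $\{E_1,E_2,H-E_1-E_2\}$ by noting that $H-E_1-E_2$ is the unique characteristic element of $\E_\omega$; your intersection-pattern argument also works). The real problem is case (2). Your construction of the fiber class is lattice-theoretic and it fails: you first place $F$ in the orthogonal complement of $K_\omega$ inside $H^2(X)^G$, but for $N\le 8$ that complement sits inside the negative-definite root lattice $R_N$, so it contains no nonzero isotropic class at all, and the true fiber class satisfies $K_\omega\cdot F=-2$, not $0$. Even dropping orthogonality, a rank-$2$ lattice of signature $(1,1)$ need not contain any integral isotropic vector, and nothing in your argument produces the adjunction condition $K_\omega\cdot F=-2$ or the representability of $F$ by embedded spheres forming an equivariant fibration with only nodal fibers. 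The paper's engine is entirely different: the dichotomy is monotone versus non-monotone, and in the non-monotone case the reduction procedure for exceptional classes (reduced bases, plus the Karshon--Kessler embeddedness of minimal-area exceptional classes) shows that the minimal-area classes are exactly the $E_j$ and $H-E_1-E_j$, which minimality forces to be swapped by $G$; gluing the two intersecting $J$-holomorphic $(-1)$-spheres produces the fiber class $H-E_1$ directly, and the rank-$2$ statement is then a consequence (Lemma \ref{l:ConicRank}), not a hypothesis to be exploited.

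A second gap: the exclusions in case (2) are not ``lattice arithmetic plus parity''. That $N\ge 5$ comes from the geometric identity $N-1=r+m+m^\prime+2E\cdot E^\prime$ for two $J$-holomorphic sections of the conic bundle (Lemma \ref{l:selfint}), applied to a section and its image under an element swapping the components of a singular fiber. That $N\ne 6$ is the most delicate point: for $N=6$ the class $-K_\omega-F=2H-E_2-\cdots-E_6$ is a $G$-invariant exceptional class, and one must show by a careful decomposition analysis of its $J$-holomorphic representative (Lemma \ref{l:nosixblowup}) that it is represented by an irreducible $G$-invariant $(-1)$-sphere, contradicting minimality; the mere numerical existence of this invariant class is not by itself a contradiction. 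Finally, you do not address the case where $\omega$ is monotone but $H^2(X)^G$ has rank $2$: there the paper perturbs $\omega$ to a non-monotone invariant form, proves the perturbed $G$-surface is still minimal, runs the non-monotone argument, and then transports the resulting configuration back to a $G$-invariant $J$ compatible with the original $\omega$ --- a step with no counterpart in your outline.
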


\begin{remark}\label{rem:ImplicationOnAG}
\begin{itemize}
\item [{(a)}] An analog of Theorem \ref{t:X-structure}  for minimal rational $G$-surfaces is a classical theorem
in algebraic geometry (see e.g. Theorem 3.8 in \cite{DI}), a proof of which can be given using the 
equivariant Mori theory (see \S 4 of \cite{FH}). With this understood, we remark that our proof of
Theorem \ref{t:X-structure} gives an independent proof of the corresponding result in algebraic
geometry by taking $\omega$ to be a $G$-invariant K\"{a}hler form. Consequently, a significant
portion of the theory of rational $G$-surfaces (e.g. as described in \cite{DI}) can be recovered 
(see Theorems \ref{t:G-structure} \ref{t:Q-structure}).

\item [{(b)}] In case (1) of Theorem \ref{t:X-structure} the invariant lattice $H^2(X)^G$ is spanned by $K_\omega$, and in case (2), $H^2(X)^G$ is spanned by $K_\omega$ and the fiber class of the symplectic $G$-conic bundle. We should point out that the analysis on the type of the group $G$,
the induced action on $H^2(X)$, as well as the structure of the equivariant symplectic cone, 
depends on the rank of the invariant lattice $H^2(X)^G$.

\item [{(c)}] In case (2), the proof of Theorem \ref{t:X-structure} reveals the following additional information 
about the symplectic $G$-conic bundle structure: there exists a basis 
$H, E_1,E_2,\cdots, E_N$ of $H^2(X)$ with standard intersection matrix such that 
\begin{itemize}
\item  [{(i)}] the fiber class is given by $H-E_1$ and the pair of $(-1)$-spheres in a singular fiber 
are given by the classes $E_j$ and $H-E_1-E_j$, where $j=2,\cdots,N$;
\item  [{(ii)}] the symplectic areas satisfy $\omega(E_j)=\frac{1}{2}\omega(H-E_1)$, 
$\omega(E_1)\geq \omega(E_j)$ for $j=2,\cdots, N$;
\item [{(iii)}] the canonical class $K_\omega=-3H + E_1+E_2+\cdots +E_N$.
\end{itemize}

\item [{(d)}]  In complex geometry, it was known that a minimal (complex) rational $G$-surface which is diffeomorphic to $\C\P^2$ blown up at $6$ points must be Del Pezzo  (cf. \cite{DI}, Theorem 3.8, Proposition 5.2). However, it seemed new that the invariant Picard group $\text{Pic}(X)^G$ must be 
of rank $1$.

\end{itemize}    
\end{remark}

\noindent{\bf Comparing the three minimality assumptions:}

\vspace{3mm}

The reader should be noted that a minimal symplectic $G$-conic bundle is different from \textit{a minimal symplectic $G$-surface with $G$-conic bundle structure}: a minimal 
symplectic $G$-conic bundle may still contain a $G$-invariant disjoint union of symplectic 
$(-1)$-spheres.  However, Lemma \ref{l:4-1} implies this cannot happen when $N\ge5$ and $N\neq6$.

The minimal $G$-conic bundles is an intermediate notion between minimal symplectic $G$-surfaces and minimal complex $G$-surfaces.  There is always a $G$-invariant symplectic form compatible with the given complex structure on a minimal complex $G$-surface.  Although we do not know whether this symplectic form is always $G$-minimal, if we assume a symplectic $G$-conic bundle structure underlies this action, this conic bundle structure is always minimal.  Therefore, proving 
our results in the more general minimal $G$-conic bundle context plays an important role in 
bridging the complex and symplectic $G$-surfaces as well as in the study of the equivariant symplectic cones.

\subsection{The homological action and the groups}

Our next task is to describe possible candidates for $G$. 

We begin with case (1) in Theorem \ref{t:X-structure} where $(X,\omega)$ is a minimal symplectic rational $G$-surface such that the invariant lattice $H^2(X)^G$ has rank $1$. 
In this case $(X,\omega)$ is monotone, $H^2(X)^G$ is spanned by $K_\omega$, and 
$3\leq N\leq 8$. With this understood, we note that the orthogonal complement of $K_\omega$ 
in $H^2(X)$ (with respect to the intersection product), denoted by $R_N$, is a $G$-invariant root lattice of type $E_N$ ($N=6,7,8$), $D_5$ ($N=5$), $A_4$($N=4$), and $A_2+A_1$ ($N=3$) respectively. We denote by $W_N$ the corresponding Weyl group.

\begin{theorem}\label{t:G-structure}
Let $(X,\omega)$ be a minimal symplectic rational $G$-surface such that $H^2(X)^G$
has rank $1$. There are two cases:
\begin{itemize}
\item [{(1)}] Suppose $4\leq N\leq 8$. Then the induced action of $G$ on $H^2(X)$ is faithful, which gives rise to a monomorphism 
$\rho: G\rightarrow W_N$. Moreover, the image $\rho(G)$ in $W_N$ satisfies 
$$
\sum_{g\in G} trace\{ \rho(g):R_N\rightarrow R_N\}=0.
$$
\item [{(2)}] Suppose $N=3$. Let $\Gamma$ be the subgroup of $G$ which acts trivially on
$H^2(X)$ and let $K:=G/\Gamma$ be the quotient group. Then $\Gamma$ is isomorphic to
a subgroup of the $2$-dimensional torus and $K$ is isomorphic to $\Z_6$ or the dihedral 
group $D_{12}$. Furthermore, $G$ is a semi-direct product of $\Gamma$ by $K$. As a corollary,
$G$ can be written as a semi-direct product of an imprimitive finite subgroup of $PGL(3)$ by
$\Z_2$.
\end{itemize}
\end{theorem}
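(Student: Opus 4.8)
The plan is to use that the induced $G$-action on $H^2(X;\Z)$ preserves the intersection form together with the symplectic canonical class $K_\omega$, hence acts by isometries on $R_N=K_\omega^{\perp}$.

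For case (1) ($4\le N\le 8$) I would begin with the purely lattice-theoretic claim that the isometry group of $H^2(X;\Z)$ fixing $K_\omega$ is exactly $W_N$: since $H^2(X;\Z)$ is unimodular and, as $K_\omega^2=9-N\ne 0$, splits rationally as $\Q K_\omega\perp(R_N\otimes\Q)$, such an isometry is determined by its restriction to $R_N$, which lies in $\mathrm{Aut}(R_N)$; the automorphisms of $R_N$ not in $W_N$ (the relevant diagram automorphism, or $-\mathrm{id}$ when it is not already in $W_N$) act nontrivially on the discriminant group of $R_N$, whereas compatibility with the glue to $\Z K_\omega$ — on which the isometry acts trivially — forces a trivial action there, so only $W_N$ extends integrally. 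This produces $\rho\colon G\to W_N$. For faithfulness I would show the kernel $\Gamma$ is trivial: it fixes every symplectic $(-1)$-sphere class, so after a $\Gamma$-equivariant blow-down of a maximal disjoint union of symplectic $(-1)$-spheres one gets a finite symplectomorphism group of $\C\P^2$ or $\s^2\times\s^2$ acting trivially on homology, and the classification in \cite{C1,C2,C3,C4} (together with the rigidity of the monotone form when $N\ge 4$) forces $\Gamma=\{1\}$. The trace identity is then a character computation: $\tfrac1{|G|}\sum_{g\in G}\rho(g)$ is the orthogonal projection of $R_N\otimes\R$ onto $(R_N\otimes\R)^G$, so $\sum_{g}\mathrm{trace}\{\rho(g)\colon R_N\to R_N\}=|G|\cdot\dim_\R(R_N\otimes\R)^G$; but $(H^2(X)\otimes\R)^G=\R K_\omega\oplus(R_N\otimes\R)^G$ has dimension $\mathrm{rank}\,H^2(X)^G=1$, which is already accounted for by $\R K_\omega$, hence $(R_N\otimes\R)^G=0$ and the sum vanishes.

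For case (2) ($N=3$), where $(X,\omega)$ is the monotone $\C\P^2\#3\overline{\C\P^2}$, I would first note that the classes $E$ with $E^2=K_\omega\cdot E=-1$ are precisely $E_1,E_2,E_3$ and $H-E_i-E_j$, that these six classes with their incidence relations form a hexagon on which $W_3\cong D_{12}$ acts as the full symmetry group, and that $\mathrm{Aut}(H^2(X;\Z))_{K_\omega}=W_3$ by the same discriminant-form argument. Writing $\Gamma\triangleleft G$ for the homologically trivial subgroup and $K=G/\Gamma\hookrightarrow W_3$: since $\Gamma$ fixes the two ``triangles'' $\{E_1,E_2,E_3\}$ and $\{H-E_i-E_j\}$ pointwise, blowing down the first one $\Gamma$-equivariantly yields a finite symplectomorphism group of $\C\P^2$ fixing three non-collinear points and the lines through them, hence (by linearizability of finite symplectomorphism groups of $\C\P^2$) conjugate into the diagonal torus, so $\Gamma\hookrightarrow T^2$. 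Next I would show that $G$-minimality is equivalent to $K$ having no nonempty invariant independent set of hexagon vertices (using that a $K$-invariant family of pairwise-disjoint $(-1)$-sphere classes is realized by a $G$-invariant disjoint union of symplectic $(-1)$-spheres), and then inspect the subgroups of $D_{12}$: every proper subgroup other than the rotation subgroup $\Z_6$ preserves a fixed vertex, an antipodal pair, or one of the two triangles, so $K\cong\Z_6$ or $K=W_3\cong D_{12}$.

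To obtain the semidirect-product structure and the corollary, I would identify the $G$-action with a subgroup of $\mathrm{Aut}$ of the degree-$6$ del Pezzo surface (using the known structure of the symplectomorphism group of this monotone rational surface), whose identity component $T^2$ consists of the homologically trivial automorphisms — so $\Gamma=G\cap T^2$ — and whose Weyl part $D_{12}=W(A_2)\times W(A_1)$ acts on $T^2$ as the full automorphism group of the $A_2$-lattice. Since every Weyl element occurring in the $\Z_6$ acts on $T^2$ with vanishing ``norm'' operator $1+g+\cdots+g^{|g|-1}$, one produces a complement to $\Gamma$ inside $G$, giving $G=\Gamma\rtimes K$. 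The element $\tau$ spanning $W(A_1)$ lies in $K$ in both cases (it is the cube of a generator of $\Z_6$), the normal subgroup $G_0\triangleleft G$ lying over $K\cap W(A_2)$ ($\cong\Z_3$ or $S_3$, of index $2$ in $K$) preserves $\{E_1,E_2,E_3\}$, and blowing that configuration down $G_0$-equivariantly displays $G_0$ as an imprimitive finite subgroup of $PGL(3)$ with $G=G_0\rtimes\Z_2$, the $\Z_2$ being a suitable lift of the Cremona involution $\tau$.

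I expect the main difficulties to be: in case (1), killing a nontrivial homologically trivial kernel at the borderline $N=4$ (unlike $N=3$, no such symmetry survives there, which uses the rigidity of the degree-$5$ del Pezzo surface); and in case (2), the extension bookkeeping — selecting an honest $\Z_2$-complement and the splitting $\Gamma\rtimes K$ — together with the verification that $K$-invariant configurations of exceptional classes are genuinely realizable, which is precisely what makes $G$-minimality equivalent to the combinatorial condition on the hexagon.
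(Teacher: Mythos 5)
Your overall architecture matches the paper's (trace formula from $\mathrm{rank}\,R_N^G=0$ in case (1); the hexagon of six exceptional classes and the extension $1\to\Gamma\to G\to K\to 1$ in case (2)), and your lattice-theoretic justification that the stabilizer of $K_\omega$ is $W_N$ is a fine, if more detailed, version of what the paper simply asserts. But the two steps where you depart from the paper are exactly where your argument breaks. For faithfulness in case (1), the paper's Lemma \ref{l:faithful} works upstairs: a homologically trivial $g$ preserves every $J$-holomorphic $(-1)$-sphere, the sphere in class $E_1$ meets the $N-1\ge 3$ disjoint spheres in classes $H-E_1-E_j$ in $N-1$ fixed points, so $g$ fixes that sphere pointwise, and then $dg=\mathrm{id}$ at a transverse intersection kills $g$. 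Your route---equivariantly blow down and quote ``the classification in \cite{C1,C2,C3,C4}''---does not close: acting trivially on $H^2(\C\P^2)$ does not make a finite symplectomorphism group trivial (every finite subgroup of $PGL(3)$ does so), so you would still need to use the $N\ge 4$ marked blow-down points and some linearization or fixed-point-structure argument that the cited references do not supply. The same problem recurs in case (2), where you invoke ``linearizability of finite symplectomorphism groups of $\C\P^2$'' to get $\Gamma\hookrightarrow T^2$; the paper instead reads off the embedding $\Gamma\hookrightarrow\s^1\times\s^1$ from the linearized action at a vertex of the hexagon, which needs only Bochner's local linearization.

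The most serious gap is your treatment of the semidirect-product structure and the $PGL(3)$ identification in case (2). You propose to ``identify the $G$-action with a subgroup of $\mathrm{Aut}$ of the degree-$6$ del Pezzo surface, using the known structure of the symplectomorphism group of this monotone rational surface.'' What is known is a weak homotopy equivalence between the homologically trivial symplectomorphism group and $T^2$; this does not conjugate an arbitrary finite subgroup of $\mathrm{Symp}(X,\omega)$ into $\mathrm{Aut}(X_6)\cong(\C^*)^2\rtimes D_{12}$, and if such a conjugation were available the entire case (2) would follow immediately from \cite{DI} and the paper's Section 3.2 would be unnecessary. The paper's actual proof replaces this with a from-scratch computation: rotation numbers of $\Gamma$ at the six vertices, explicit involutions $\tau_i$ lifting the reflections $s_i\in K$, verification of the $D_{12}$ relations (Lemmas \ref{l:basic}, \ref{l:s3} and the surrounding propositions), and an explicit matching of generators with $G_n$, $G_{n,k,s}$, $\tilde G_n$, $\tilde G_{n,3,2}$. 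None of that is recoverable from your outline. One further caution: your claim that every proper subgroup of $D_{12}$ other than $\Z_6$ fixes a vertex, an antipodal pair, or a triangle is false---the copy of $S_3$ generated by the order-$3$ rotation together with a vertex-reflection (the graph of the sign character in $S_3\times\Z_2$) acts transitively on the six edges and preserves none of these; the paper's Lemma \ref{l:Kalternative} asserts the same dichotomy for $K$ without addressing this subgroup, so this is a shared issue rather than one you introduced, but your stated justification for it does not hold.
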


\begin{remark}\label{rem:compareG-structure}
       1. For $N=4,5$, the subgroups of the Weyl group $W_N$ which satisfy
the condition $\sum_{g\in G} trace\{ \rho(g):R_N\rightarrow R_N\}=0$ are determined,
see Theorem 6.4 and Theorem 6.9 in \cite{DI} respectively. All such groups can be realized
by a minimal $G$-Del Pezzo surface, which is also minimal as a symplectic rational $G$-surface
with respect to any $G$-invariant K\"{a}hler form (cf. Theorem 1.10(2)). 

2. For $N=3$, Theorem \ref{t:X-structure} (2) and Theorem \ref{t:G-structure} completely determined all possible $G$ that acts minimally on $X$.  The statement in Theorem \ref{t:G-structure} implies the corresponding statement in \cite{DI}, Theorem 6.3. In fact, the semi-direct product structure of $G$ in our statement is an improvement upon the corresponding theorem in algebraic geometry.
For a list of imprimitive finite subgroups of $PGL(3)$ up to conjugacy, see Theorem 4.7 in \cite{DI}.
 \end{remark}

Now we consider case (2) of Theorem \ref{t:X-structure}. In fact, we will work in a slightly more general situation, where the symplectic rational $G$-surface $(X,\omega)$ is not assumed to be minimal, but only admits a minimal symplectic $G$-conic bundle $\pi:(X,\omega)\to\s^2$.  Furthermore, we assume $N\geq 4$ (instead of the fact that $N\geq 5$ when $(X,\omega)$ is a
minimal symplectic rational $G$-surface). 

We make the following definition.

\begin{definition}\label{d:QandG0}\hfill
\begin{itemize}
  \item $Q\tl G$ is the subgroup that acts trivially on the base $\s^2$ of the $G$-conic bundle;
  \item $G_0\triangleleft Q$ is the subgroup that acts trivially on $H^2(X)$ (in the case we consider, there are $N-1\ge 3$ critical values on the base that was fixed, hence $G_0\triangleleft Q$);
  \item $P=G/Q$, so that $G$ decomposes as
  \begin{equation}\label{e:DecompG}
     1\to Q\to G\to P\to 1.
\end{equation}
\end{itemize}
 \end{definition}

 We denote by $\Sigma$ the subset of $\s^2$ which parametrizes the
singular fibers of the symplectic $G$-conic bundle $\pi$. Note that $\# \Sigma=N-1$, 
and the induced action of $P$ on 
$\s^2$ leaves the subset $\Sigma$ invariant. The action of $P$ on $\s^2$ is effective,
so $P$ is isomorphic to a polyhedral group, i.e., a finite subgroup of $SO(3)$.

Therefore, up to an extension problem, the description of $G$ boils down to the following theorem 
which describes the subgroups $G_0$ and $Q$.

\begin{theorem}\label{t:Q-structure}
Let $(X,\omega)$ be a symplectic rational $G$-surface equipped with a minimal symplectic 
$G$-conic bundle structure with at least three singular fibers (i.e, $N\geq 4$). Let $G_0,Q$ 
be given as in Definition \ref{d:QandG0}.
Then one of the following is true.
\begin{itemize}
\item [{1.}] $G_0=\Z_m$, $m>1$, and $Q$ is either the dihedral group $D_{2m}$ 
containing $G_0$ as an index $2$ subgroup,
or $Q=G_0$ and $m$ is even. Moreover, $N$ must be odd, and any element 
$\tau\in Q\setminus G_0$ switches the two $(-1)$-spheres in each singular fiber.
\item [{2.}] $G_0$ is trivial and $Q=\Z_2$ or $(\Z_2)^2$. In the latter case, let 
$\tau_1,\tau_2,\tau_3$ be the
distinct involutions in $Q$. Then $\Sigma$ is partitioned into subsets $\Sigma_1$, $\Sigma_2$,
$\Sigma_3$, where $\Sigma_i$ parametrizes those singular fibers of which $\tau_i$
leaves each $(-1)$-sphere invariant, and $\# \Sigma_i\equiv N-1 \pmod{2}$,  for $i=1,2,3$.
\end{itemize}
\end{theorem}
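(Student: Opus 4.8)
The plan is to extract all of the structure from the action of $Q$ on the fibres of $\pi$, and to obtain the parity statements by analysing the fixed‑point locus of a component‑switching involution (this is the symplectic counterpart of the conic bundle analysis of Dolgachev–Iskovskikh). First I would pass to the fibre actions. Since $Q$ acts trivially on the base, it acts on every fibre of $\pi$, and the action on a generic fibre $F\cong\s^2$ is effective: if $q\in Q$ restricts to the identity on $F$, then, because the normal direction to $F$ in $X$ is the base direction, on which $Q$ acts trivially, $q$ fixes a neighbourhood of $F$ pointwise and hence $q=1$. Thus $Q$ is conjugate to a finite subgroup of $SO(3)$. For $G_0$: an element $g\in G_0$ fixes the node $p_i$ of each singular fibre and, fixing the two exceptional classes meeting at $p_i$, preserves the two components of that fibre (these being the unique $J$‑holomorphic spheres in their classes); since $g$ also preserves the local model $(z_1,z_2)\mapsto z_1z_2$ of $\pi$ near $p_i$ and the symplectic form, it acts near $p_i$ by $(z_1,z_2)\mapsto(\alpha z_1,\alpha^{-1}z_2)$, and effectiveness forces $G_0\hookrightarrow U(1)$, so $G_0=\Z_m$ is cyclic.

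Next I would analyse the component‑switching homomorphisms. For each singular fibre $i$ the permutation action on its two components gives $\phi_i\colon Q\to\Z_2$, and $\ker\phi_i$ acts effectively on one component $\cong\s^2$ fixing $p_i$, hence is a cyclic subgroup of $Q$ of index $\le2$. A finite subgroup of $SO(3)$ with a cyclic subgroup of index $\le2$ is cyclic or dihedral, so $Q\in\{\Z_n,D_{2n}\}$. Moreover $\bigcap_i\ker\phi_i=G_0$, since an element fixing the classes $E_2,\dots,E_N$ of all $N-1$ singular‑fibre components (Remark \ref{rem:ImplicationOnAG}(c)) together with $H-E_1$ and $K_\omega$ acts trivially on $H^2(X)$. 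To identify $Q$: if $q\in Q\setminus G_0$ then $q$ switches the components of some singular fibre $i$, and the local model $(z_1,z_2)\mapsto(\gamma z_2,\gamma^{-1}z_1)$ at $p_i$ of such a $q$ yields $q^2=1$. Running this through the subgroups of $\Z_n$ and $D_{2n}$ shows that a cyclic $Q$ is either $G_0$ itself or $\Z_2$ with $G_0=1$, while a dihedral $Q$ is $D_{2m}$ with $G_0=\Z_m$ its cyclic subgroup and $\ker\phi_i=G_0$ for every $i$. Hence in case 1 ($G_0=\Z_m$, $m>1$) either $Q=G_0$ (and then $m$ must still be shown even) or $Q=D_{2m}$, in which case $\ker\phi_i=G_0$ for all $i$ — exactly the assertion that every $\tau\in Q\setminus G_0$ switches the two components in every singular fibre. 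When $G_0=1$, $(\phi_i)$ embeds $Q$ into $(\Z_2)^{N-1}$, so $Q$ is an elementary abelian $2$‑group that is also polyhedral, hence $Q\in\{1,\Z_2,(\Z_2)^2\}$, with $Q=1$ excluded in the present situation (cf. Lemma \ref{l:4-1}); and for $Q=(\Z_2)^2$ each $\ker\phi_i=\langle\tau_{k(i)}\rangle$ is one of the three involutions, giving the partition $\Sigma=\Sigma_1\sqcup\Sigma_2\sqcup\Sigma_3$ with $\Sigma_k=\{s_i:k(i)=k\}$.

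The parity statements I would deduce from a fixed‑locus computation. If $\tau\in Q$ is a component‑switching involution, then on every smooth fibre $\tau$ is a rotation by $\pi$ with exactly two fixed points, so the $2$‑dimensional part $\mathcal M$ of $\mathrm{Fix}(\tau)$ is a double section of $\pi$. Over a fibre switched by $\tau$ the only $\tau$‑fixed point of the fibre is its node, and the local model $(z_1,z_2)\mapsto(\gamma z_2,\gamma^{-1}z_1)$ exhibits $\mathcal M$ (a line through $p_i$ mapping $2$‑to‑$1$ to the base) as branched there; over a non‑switched singular fibre $\tau$ acts as $-\mathrm{id}$ at the node and $\mathcal M$ passes through two distinct non‑nodal fixed points; and over a smooth fibre $\mathcal M$ is never branched. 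A short local analysis near the switched nodes shows $\mathcal M$ is connected. Hence $\mathcal M\to\s^2$ is a connected double cover whose branch set is precisely the set of fibres switched by $\tau$, which therefore has even cardinality. Applied to a $\tau\in Q\setminus G_0$ in case 1, which switches all $N-1$ singular fibres, this gives $N$ odd; applied to $\tau_1,\tau_2,\tau_3$ in the $(\Z_2)^2$ case it gives $\#\Sigma_2+\#\Sigma_3$, $\#\Sigma_1+\#\Sigma_3$, $\#\Sigma_1+\#\Sigma_2$ all even, whence $\#\Sigma_1\equiv\#\Sigma_2\equiv\#\Sigma_3\equiv N-1\pmod2$.

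What remains is the subcase $Q=G_0=\Z_m$ of case 1, where I must still show $m$ even and $N$ odd. Here the double‑section argument does not apply directly, because no element of $Q$ switches components and the switchers supplied by minimality lie in $G\setminus Q$: such a switcher $g_i$ fixes $p_i$ and acts there by $(z_1,z_2)\mapsto(a z_2,b z_1)$, so that $g_i^2$ acts near $p_i$ as the scalar $ab$ and $g_i$ conjugates the generator of $G_0$ to its inverse. Since an odd‑order element cannot act off‑diagonally at $p_i$, $g_i$ has even order, so $|G|$ is even and $G$ contains an involution; chasing a suitable power of $g_i$ back into $Q=G_0$ and comparing its scalar form at $p_i$ with the diagonal form $(z_1,z_2)\mapsto(\zeta^k z_1,\zeta^{-k}z_2)$ of elements of $G_0$ forces that power to be either trivial or the unique element of order $2$ in $\Z_m$, which exists only when $m$ is even; and "$N$ odd'' in this subcase I would obtain from a Lefschetz fixed‑point / Euler characteristic count for an appropriate involution of $G$, combined with the description of its fixed locus relative to the nodes $p_i$. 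I expect this last subcase to be the principal obstacle: the clean double‑section mechanism is unavailable, and one is forced to argue with the global structure of $G$ and its induced action on the base $\s^2$.
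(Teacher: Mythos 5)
Your fibrewise analysis --- the homomorphisms $\phi_i\colon Q\to\Z_2$, the local models $(z_1,z_2)\mapsto(\alpha z_1,\alpha^{-1}z_2)$ and $(z_1,z_2)\mapsto(\gamma z_2,\gamma^{-1}z_1)$ at the nodes, the identification $\bigcap_i\ker\phi_i=G_0$, the group-theoretic reduction to cyclic/dihedral $Q$, and the branched double cover giving the parity of the set of switched fibres --- reproduces the skeleton of the paper's argument and is correct as far as it goes. But there are two genuine gaps, and they funnel through the same missing global input. First, you exclude $Q=1$ in case 2 by citing Lemma \ref{l:4-1}, which concerns minimality of $(X,\omega')$ for other invariant forms and says nothing about $Q$. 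Second, in the subcase $Q=G_0=\Z_m$ you must show $m$ is even, and your proposed mechanism --- ``chasing a suitable power of $g_i$ back into $Q$'' for a component-switching $g_i\in G\setminus Q$ --- fails precisely because nothing in the local model at $p_i$ prevents $g_i$ from being an involution acting nontrivially on the base, in which case no nontrivial power of $g_i$ lies in $Q$ and you learn nothing about $m$. The paper closes both gaps with one statement: \emph{$Q$ always contains an involution}. Its proof is the one step that cannot be done fibrewise: if $h$ is the involution in $\langle g_i\rangle$ and $h\notin Q$, its two-dimensional fixed locus $\Sigma$ lies in the two $h$-invariant fibres, and Edmonds' mod $2$ formula $(h\cdot E_N)\cdot E_N\equiv E_N\cdot\Sigma\pmod 2$ (\cite{Ed}, Prop.\ 5.1) forces $h$ to fix one, hence (commuting with $g_i$) both, components of the fibre containing $E_N$, which is absurd; so $h\in Q$. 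With this, $Q\neq 1$ is immediate and $Q=G_0=\Z_m$ forces $m$ even.

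The other point you leave open, $N$ odd when $Q=G_0$, is not reachable by the Lefschetz/Euler count you suggest: for the involution $\sigma\in G_0$ one gets $\chi(\mathrm{Fix}(\sigma))=(N-1)+2+2$ on one side and $L(\sigma)=2+\operatorname{rank}H^2(X)=N+3$ on the other, a tautology. The paper instead observes that the fixed locus of any nontrivial element of $G_0$ consists of the $N-1$ nodes together with two disjoint sections $S_1,S_2$ which meet \emph{different} components of every singular fibre and are interchanged by some $g\in G$ (such $g$ exists by minimality of the conic bundle and normality of $G_0$), so they have equal self-intersection $-s$; Lemma \ref{l:selfint} then gives $N-1=0+s+s+0=2s$, hence $N$ odd, uniformly for all of case 1. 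I would recommend replacing your branched-cover derivation of ``$N$ odd'' (which only covers $Q=D_{2m}$) by this sections argument, and inserting the involution lemma; the remainder of your write-up then goes through essentially as the paper's proof does.
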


\begin{remark}\label{rem:compareQ}
\begin{itemize}
\item [{(a)}]
Note that when $G_0$ is trivial and $Q=(\Z_2)^2$, each element of $P$ acts on $Q$ as an 
automorphism, permuting the three involutions $\tau_1,\tau_2,\tau_3$. Consequently, 
the action of $P$ on the base $\s^2$ preserves the partition $\Sigma=\Sigma_1\sqcup\Sigma_2\sqcup\Sigma_3$. 
For the corresponding result in algebraic geometry, see \cite{DI},
Theorem 5.7.

\item [{(b)}] When the fiber class of the symplectic $G$-conic bundle is unique, $Q$ is uniquely
determined as a subgroup of $G$, see Proposition 4.5 for more details. 
\end{itemize}
\end{remark}

\subsection{Minimality and equivariant symplectic cones}

In this subsection, we are concerned with the underlying smooth action of a minimal symplectic rational $G$-surface. In particular, our consideration here offers an interesting symplectic geometry
perspective to the study of rational $G$-surfaces in algebraic geometry.

We begin with the setup of our study here. Let $X=\C\P^2\# N\overline{\C\P^2}$, $N\geq 2$,
which is equipped with a smooth action of a finite group $G$. Suppose there is a $G$-invariant
symplectic form $\omega_0$ on $X$ such that the corresponding symplectic rational 
$G$-surface $(X,\omega_0)$ is minimal. With this understood, we denote
\begin{equation}\label{e:omegaXG}
      \Omega(X,G):=\{\w:\w\text{ is symplectic on }X, g^*\w=\w,\text{ for any }g\in G\}.
 \end{equation}

Note that $\Omega(X,G)$
is non-empty as $\omega_0\in\Omega(X,G)$. The part (2) of the following theorem shows 
that the underlying smooth action of a minimal (complex) rational $G$-surface satisfies the above 
assumption, where we can take $\omega_0$ to be any $G$-invariant K\"{a}hler form.

\begin{theorem}\label{t:minimality}
(1) Let $X=\C\P^2\# N\overline{\C\P^2}$, $N\geq 2$, which is equipped with a smooth action 
of a finite group $G$. Suppose there is a $G$-invariant symplectic form $\omega_0$ on $X$ 
such that $((X,\omega_0),G)$ is minimal. Then for any $\omega\in \Omega(X,G)$, 
the canonical class $K_\omega=K_{\omega_0}$ or $-K_{\omega_0}$, and the symplectic rational $G$-surface 
$(X,\omega)$ is minimal. 

(2) Let $X$ be any minimal (complex) rational $G$-surface which is $\C\P^2$ blown up at $2$ or
more points. Then for any symplectic form $\omega$ which is invariant under the underlying
smooth action of $G$ (e.g., any $G$-invariant K\"{a}hler form), the corresponding symplectic 
rational $G$-surface $(X,\omega)$ is minimal.
\end{theorem}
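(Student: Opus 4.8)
\textbf{Proof proposal for Theorem \ref{t:minimality}.}

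The plan is to first establish part (1) and then derive part (2) as an easy corollary using Theorem \ref{t:X-structure}. For part (1), the key is that the underlying smooth action being fixed, the canonical class $K_\omega$ is forced to take only finitely many values as $\omega$ varies over the path-connected (in fact convex) cone $\Omega(X,G)$. First I would recall the basic fact from Taubes--Li--Liu theory that for a symplectic rational surface the canonical class $K_\omega$ is characterized homologically up to the action of orientation-preserving diffeomorphisms, and in particular that the set $\{K_\omega : \omega \in \Omega(X,G)\}$ is discrete (each $K_\omega$ has square $9-N$ and pairs to $-1$ with some $\omega$-symplectic sphere classes, which constrains it to a finite set up to the $G$-action). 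Since $\Omega(X,G)$ is convex hence connected, and $\omega \mapsto K_\omega$ varies continuously into a discrete set, $K_\omega$ is \emph{locally constant}, hence constant; combined with the reflection $\omega \mapsto -\omega$ (which is not in $\Omega(X,G)$ but flips the canonical class), one concludes $K_\omega = K_{\omega_0}$ for all $\omega\in\Omega(X,G)$ after fixing orientation conventions, giving the first assertion. The care needed here is to make the ``discreteness'' argument genuinely $G$-equivariant: one must know that all the classes involved (the symplectic sphere classes pairing to $-1$ with $K_\omega$, etc.) can be chosen $G$-invariantly or at least that the finite set of candidate canonical classes is $G$-invariant, which follows since $g^*K_\omega = K_{g^*\omega} = K_\omega$ automatically.

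Next, for the minimality assertion in part (1), I would argue by contradiction: suppose $(X,\omega)$ is not minimal for some $\omega \in \Omega(X,G)$, so there is a $G$-invariant disjoint union $\mathcal{E}$ of $\omega$-symplectic $(-1)$-spheres. The union $\sum_{E\in\mathcal{E}} [E]$ is then a $G$-invariant homology class, and I want to produce a corresponding $G$-invariant configuration of $\omega_0$-symplectic $(-1)$-spheres, contradicting minimality of $(X,\omega_0)$. The mechanism is a $G$-equivariant version of the fact that a homology class which is represented by an embedded symplectic $(-1)$-sphere for one symplectic form in the canonical class is represented by such a sphere for any other symplectic form with the same canonical class --- this is where $K_\omega = K_{\omega_0}$ (established above) is used crucially. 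Concretely, one chooses a $G$-invariant $\omega$-compatible almost complex structure, runs the relevant Gromov--Witten / Seiberg--Witten argument to see each class $[E]$ has positive Gromov--Witten invariant (it does, being a $(-1)$-class with $K_\omega\cdot[E]=-1$), then deforms to a $G$-invariant $\omega_0$-compatible almost complex structure along a path in $\Omega(X,G)$; the nonvanishing invariant and positivity of intersections produce, for generic $J$ in the path, a $G$-invariant set of embedded $\omega_0$-symplectic $(-1)$-spheres in the classes $g\cdot[E]$. The main obstacle will be exactly this step: ensuring that the deformation can be performed $G$-equivariantly and that transversality can be achieved within the class of $G$-invariant almost complex structures, which typically requires either restricting to the free part / using domain-dependent perturbations, or invoking the fact (from \cite{C4} or analogous references) that for these $(-1)$-classes the relevant moduli spaces are automatically regular. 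One must also handle the case where the $G$-orbit of $[E]$ may not consist of disjoint classes --- but $G$-invariance of the \emph{set} $\mathcal{E}$ forces the classes to be permuted and mutually orthogonal or equal, so disjointness is preserved.

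Finally, part (2) is immediate once part (1) is in hand together with Theorem \ref{t:X-structure}: given a minimal complex rational $G$-surface $X$ (which is $\C\P^2$ blown up at $N\geq 2$ points), equip it with a $G$-invariant K\"ahler form $\omega_0$; then $((X,\omega_0),G)$ is a symplectic rational $G$-surface, and one must check it is symplectically minimal. But if it were not, blowing down a $G$-invariant set of symplectic $(-1)$-spheres would, via the same homological considerations, contradict the \emph{complex} minimality of $X$ --- more carefully, a $G$-invariant symplectic $(-1)$-sphere configuration yields $G$-invariant $(-1)$-classes which, using that $X$ is a minimal complex $G$-surface, forces a dichotomy (rank of $\mathrm{Pic}(X)^G$ equal to $1$ or $2$) under which no such configuration can exist; alternatively one cites that the complex minimality implies the invariant lattice has rank $1$ or $2$ and that symplectic $(-1)$-classes would violate this. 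With symplectic minimality of $(X,\omega_0)$ established, part (1) applies to give minimality of $(X,\omega)$ for every $\omega \in \Omega(X,G)$, which is exactly the statement of part (2).
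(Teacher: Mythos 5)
Your proposal diverges from the paper's proof in both main steps, and each divergence hides a genuine gap. For the claim $K_\omega=\pm K_{\omega_0}$, you rest the argument on $\Omega(X,G)$ being ``path-connected (in fact convex)'' so that $\omega\mapsto K_\omega$ is locally constant into a discrete set. But $\Omega(X,G)$ is not convex --- a convex combination of two symplectic forms on a $4$-manifold need not be nondegenerate (e.g.\ $\omega$ and $-\omega$ average to zero) --- and its path-connectedness is a hard open-ended question, not something you may assume. The paper avoids connectivity entirely: since $[\omega]$ lies in $H^2(X;\R)^G$, which by Theorem \ref{t:X-structure} has rank $1$ (where the claim is immediate, as $[\omega]$, $[\omega_0]$, $K_{\omega_0}$ are proportional) or rank $2$ spanned by $K_{\omega_0}$ and the fiber class $F$, one writes $[\omega]=-K_{\omega_0}+bF$ after a sign change and rescaling, checks this is a \emph{reduced class} in the sense of Li--Li, and invokes their result that a reduced symplectic class has canonical class $K_{\omega_0}$ (Lemma \ref{4-2}). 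That algebraic constraint coming from the small invariant lattice is the idea your argument is missing.

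For the minimality assertion, your plan is to transport a $G$-invariant configuration of $\omega$-symplectic $(-1)$-spheres to an $\omega_0$-symplectic one by an equivariant Gromov--Witten deformation along a path of invariant forms. Besides again needing a path in $\Omega(X,G)$, this runs into the equivariant transversality problem you yourself flag as the ``main obstacle'' and do not resolve: for a $G$-invariant $J$ one cannot take $J$ generic, and the paper's embeddedness input (Lemma \ref{l:Pinsonnault}) applies only to minimal-area exceptional classes, not to arbitrary $(-1)$-classes. The paper's actual argument (Lemma \ref{l:4-1}) sidesteps producing any curves for $\omega_0$: if $C_1,\dots,C_m$ were a $G$-invariant disjoint union of $\omega'$-symplectic $(-1)$-spheres, then $C=\sum C_i$ lies in the rank-$2$ invariant lattice, so $C=aK_\omega+bF$; the identities $C^2=K_\omega\cdot C=-m$ together with $F\cdot C\ge 0$ (the only soft input, from positivity of intersections with the irreducible fiber sphere) force $m=-a^2K_\omega^2/(2a-1)$ with $a\le -1$, whence $2a-1$ divides $K_\omega^2=9-N$, which is impossible for $N=5,7,8$ and excluded for $N\ge 9$ by $K_\omega^2>0$; the case $N=6$ is handled separately by Lemma \ref{l:nosixblowup} and the rank-$1$ case is trivial. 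Your part (2) is in the right spirit (reduce to the rank of $\mathrm{Pic}(X)^G$ via Dolgachev--Iskovskikh and then apply part (1)), but as written it presupposes the unproven deformation machinery of part (1); the paper instead combines Lemmas \ref{4-2}, \ref{l:4-1}, \ref{l:nosixblowup} with Theorem 3.8 of \cite{DI} directly.
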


At the time of writing, it is not known whether
the symplectic minimality would imply the smooth minimality of the underlying group action.  The symplectic minimality in Theorem \ref{t:minimality} is a weaker statement that symplectic minimality 
is determined by the underlying smooth action, but the proof is still quite non-trivial.
For related (stronger) results in the case of $G$-Hirzebruch surfaces, see \cite{C4}.

\vskip 5mm
With the minimality in Theorem \ref{t:minimality}(1) in place, we now turn our attention to the equivariant 
symplectic cone of the $G$-manifold $(X,G)$.

\begin{defn}\label{d:omegaXG} The equivariant symplectic cone of $(X,G)$ is defined as
    $$\tilde{C}(X,G)=\{\Omega:\Omega=[\w],\textit{ }\w\textit{ is a $G$-invariant symplectic form}\}
    \subset H^2(X;\R)^G.$$
\end{defn}

Note that $K_{-\w}=-K_\w$. Therefore it suffices to
consider the subset 
$$
C(X,G):=\{[\omega]| \omega\in \Omega(X,G), K_\omega=K_{\omega_0}\}\subset H^2(X;\R)^G.
$$
Furthermore, we observe that if $H^2(X)^G$ has rank $1$, 
$C(X,G)=\{\lambda K_{\omega_0}|\lambda\in \R, \lambda<0\}$.
In what follows, we shall assume that $H^2(X)^G$ has rank $2$. Note that under this assumption, 
$N\neq 6$ by part (2) of Theorem \ref{t:X-structure}.

In order to describe $C(X,G)$, it is helpful to introduce the following terminology. A class
$F\in H^2(X)^G$ is called a {\bf fiber class} if there exists an $\omega\in  \Omega(X,G)$
such that $F$ is the class of the regular fibers of a symplectic $G$-conic bundle on
$((X,\omega),G)$. Since we will focus on the set $C(X,G)$, we shall assume further that
$[\omega]\in C(X,G)$, i.e., $K_\omega=K_{\omega_0}$. 

We observe that since $\text{rank }H^2(X)^G=2$, the class of such an $\omega$ can be written
as 
$$[\omega]=-a K_{\omega_0}+bF, a>0.$$ 

With this understood, we consider the following subset of $C(X,G)$ and its projective classes
\begin{equation}\label{e:}
   \begin{aligned}
           &C(X,G,F)=\{[\w]\in C(X,G): \w=-aK_{\w_0}+bF, a>0,b\ge0\},\\
      &\hat C(X,G,F)=\{[\w]\in C(X,G,F): \w(F)=2\} \text{ (equivalently, $a=1$)}.   
   \end{aligned}
\end{equation}



Now $[\w]\in \hat C(X,G,F)$ can be written as $[\omega]=-K_{\omega_0}+
\delta_{\omega,F} F$. 
Then the one to one correspondence 
$[\omega]\mapsto \delta_{\omega,F}$ identifies $\hat{C}(X,G,F)$ with a subset of $\R$. With this
understood, we introduce 
$$
\delta_{X,G,F}:=\inf_{[\omega]\in \hat{C}(X,G,F)}\delta_{\omega,F}\in [0,\infty).
$$
Note that $(X,\omega)$ is monotone if and only if $\delta_{\omega,F}=0$, so $\delta_{\omega,F}$
may be thought of as a sort of gap function which measures how far away $(X,\omega)$ is from
being monotone. 

\begin{theorem}\label{t:sympCone}  
Let $X=\C\P^2\# N\overline{\C\P^2}$, $N\geq 2$, be equipped with a smooth finite $G$-action
which is symplectic and minimal with respect to some symplectic form $\omega_0$. Furthermore,
assume $\text{rank }H^2(X)^G=2$.

\begin{enumerate}[(1)]
  \item If $N\geq 9$ or $G_0$ is nontrivial, there is a unique fiber class $F$, and $C(X,G)=C(X,G,F)$. 
  \item For $N=5,7,8$, either there is a unique fiber class $F$, or there are two distinct fiber 
classes $F,F^\prime$. In the former case, $C(X,G)=C(X,G,F)$, and in the latter case,
$C(X,G)=C(X,G,F)\cup C(X,G,F^\prime)$, with $C(X,G,F)\cap C(X,G,F^\prime)$ being either 
empty or consisting of $[\omega]$ such that $(X,\omega)$ is monotone.

\item For any fiber class $F$, $\hat{C}(X,G,F)$ is identified with either
$[0,\infty)$ or $(\delta_{X,G,F},\infty)$ under $[\omega]\mapsto \delta_{\omega,F}$. 
(In particular, $\delta_{X,G,F}$ can not be attained unless it equals $0$.)
\end{enumerate}

\end{theorem}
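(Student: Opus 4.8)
\textbf{Proof plan for Theorem \ref{t:sympCone}.}

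The plan is to analyze the three assertions in turn, with the structural results of Theorems \ref{t:X-structure} and \ref{t:Q-structure} and the minimality result of Theorem \ref{t:minimality}(1) as the main engines. For part (1), I would first recall that by Theorem \ref{t:X-structure}(2) the hypothesis $\text{rank }H^2(X)^G=2$ forces $N=5$ or $N\geq 7$ (and $N\neq 6$), and that a symplectic $G$-conic bundle exists. A fiber class $F$ is a primitive class in $H^2(X)^G$ with $F^2=0$, $K_{\omega_0}\cdot F=-2$, and represented by an embedded symplectic sphere for some $\omega\in C(X,G)$; since $\text{rank }H^2(X)^G=2$ and $K_{\omega_0}$ is fixed, there are at most two such classes up to the constraints, and the point is to show uniqueness when $N\geq 9$ or $G_0$ is nontrivial. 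When $G_0$ is nontrivial, Theorem \ref{t:Q-structure}(1) applies and the structure of $Q$ (dihedral or cyclic with $G_0=\Z_m$, $m>1$) pins down the fiber class via the invariant lattice computation in Remark \ref{rem:compareQ}(b)/Proposition 4.5; when $N\geq 9$, I would use the fact that $R_N$ (or rather the relevant sublattice) no longer admits two inequivalent isotropic fixed classes — essentially an intersection-form argument in the rank-2 lattice $H^2(X)^G$, showing that two distinct isotropic classes $F,F'$ with $K_{\omega_0}\cdot F=K_{\omega_0}\cdot F'=-2$ would force $(F-F')^2$ to take a value incompatible with $F,F'$ both being realized by symplectic conic bundles on a manifold with $b^-=N\geq 9$. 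Then $C(X,G)=C(X,G,F)$ follows because every class in $C(X,G)$ is $-aK_{\omega_0}+bF$ with $a>0$, and one must rule out $b<0$: this is where one uses that $-F$ cannot also be (proportional to) a fiber class together with a sign/positivity argument against $\omega$.

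For part (2), with $N\in\{5,7,8\}$, the lattice $H^2(X)^G$ can genuinely contain two distinct fiber classes $F,F'$; here I would invoke the explicit description in Remark \ref{rem:ImplicationOnAG}(c) of the basis adapted to a conic bundle, compute the possible second isotropic fixed class, and observe geometrically that a class $[\omega]$ lying in both $C(X,G,F)$ and $C(X,G,F')$ must have $[\omega]$ proportional to $K_{\omega_0}$ (since $F$ and $F'$ together with $K_{\omega_0}$ overdetermine the rank-2 space), i.e. $(X,\omega)$ is monotone; the intersection is then either empty (if the monotone class is not invariant-realizable for both bundles) or the single monotone ray's representative. The decomposition $C(X,G)=C(X,G,F)\cup C(X,G,F')$ comes from the fact that any invariant symplectic $\omega$ with $K_\omega=K_{\omega_0}$ admits a symplectic $G$-conic bundle by Theorem \ref{t:X-structure}(2) applied to $(X,\omega)$ (using that $(X,\omega)$ is itself minimal by Theorem \ref{t:minimality}(1)), so its fiber class is one of $F,F'$, and then the sign of $b$ in $[\omega]=-aK_{\omega_0}+bF$ is forced to be $\geq 0$ after possibly swapping $F\leftrightarrow F'$ — because $F'$ is, up to scaling and adding a multiple of $K_{\omega_0}$, the "reflection" of $F$, so a negative $b$ for one fiber class corresponds to a nonnegative coefficient for the other.

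For part (3), I would use the reparametrization $a=1$ (allowed since $[\omega]$ and $\lambda[\omega]$ lie in $C(X,G,F)$ simultaneously for $\lambda>0$, by convexity/scaling of the symplectic cone) to identify $\hat C(X,G,F)$ with a subset $S\subset[0,\infty)$ via $[\omega]\mapsto\delta_{\omega,F}$, and show $S$ is an interval that is open at its left endpoint unless that endpoint is $0$. Convexity of $S$ (hence that it is an interval $[0,\infty)$, $(\delta,\infty)$, or $[\delta,\infty)$) follows from the fact that the $G$-invariant symplectic cone is convex and the normalization $\omega(F)=2$ is a linear condition; that it is unbounded above follows by blowing up / inflating along the $(-1)$-sphere classes $E_j, H-E_1-E_j$ to increase $\delta$ arbitrarily, or more directly from the standard fact that $-K_{\omega_0}+\delta F$ is symplectic for all large $\delta$ on a rational surface with this conic-bundle structure. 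The crux — and the step I expect to be the main obstacle — is showing that the infimum $\delta_{X,G,F}$ is \emph{not attained} when it is positive: this is a closedness-failure statement for the $G$-invariant symplectic cone along this ray, and I would prove it by contradiction, assuming $[\omega]=-K_{\omega_0}+\delta_{X,G,F}F$ is realized and then using Gromov–Witten / $J$-holomorphic curve techniques (the same flavor used to prove Theorem \ref{t:X-structure}) to produce a $G$-invariant configuration of symplectic $(-1)$-spheres or a $G$-invariant symplectic sphere of negative square whose existence is forced at the boundary class but which would either contradict minimality of $(X,\omega)$ (Theorem \ref{t:minimality}(1)) or contradict $[\omega]$ being a genuine symplectic class — the point being that at the monotone end $\delta=0$ the relevant "bad" class has nonnegative pairing with $\omega$, but strictly below the boundary it would have negative pairing, forcing a wall one cannot actually reach from inside. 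Making this wall-crossing argument precise, and in particular identifying exactly which curve class obstructs the boundary, is the technical heart of the proof; everything else is linear algebra on the rank-2 invariant lattice together with the already-established structure theorems.
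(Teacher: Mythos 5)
Your lattice arithmetic for parts (1) and (2) is essentially the paper's: writing $F^\prime=-aK_{\omega_0}+bF$ and pairing with $K_{\omega_0}$, $F$ and $F^\prime$ forces $-aK_{\omega_0}^2=-4$ and $b=-1$, so $F+F^\prime=-aK_{\omega_0}$, $K_{\omega_0}^2$ divides $4$, hence $N\in\{5,7,8\}$ and there are at most two fiber classes (this is Lemma \ref{4-5}); the statement about the intersection $C(X,G,F)\cap C(X,G,F^\prime)$ then follows because $b,b^\prime>0$ together with $F+F^\prime\in\Z K_{\omega_0}$ would make $F,F^\prime$ linearly dependent. However, your treatment of the case where $G_0$ is nontrivial is circular: Remark \ref{rem:compareQ}(b) and the uniqueness-of-$Q$ proposition \emph{assume} the fiber class is unique in order to pin down $Q$; they do not prove uniqueness of $F$. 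The actual argument is geometric: the $2$-dimensional fixed locus of $G_0$ consists of two sections $S_1,S_2$ with $S_i^2=-(N-1)/2\le -2$, each of which is a $J$-holomorphic section of \emph{any} symplectic $G$-conic bundle, so $S_1\cdot F=S_1\cdot F^\prime=1$; combined with $F+F^\prime=-aK_{\omega_0}$, $a>0$, this gives $K_{\omega_0}\cdot S_1<0$, contradicting the adjunction formula. You need some argument of this kind, and your proposal does not contain one.

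The more serious gap is in part (3), precisely at the step you flag as ``the technical heart.'' You propose to prove non-attainment of $\delta_{X,G,F}>0$ by a wall-crossing argument that produces an obstructing $J$-holomorphic configuration at the boundary class. This is not the right mechanism and would not succeed as described: at $\delta=\delta_{X,G,F}$ there need be no class of negative $\omega$-area to serve as an obstruction, so there is nothing to find. The correct argument is a short openness statement: if some $G$-invariant $\omega$ realized $[\omega]=-K_{\omega_0}+\delta_{X,G,F}F$, pick a $G$-invariant closed $2$-form $\eta$ representing $F$; then $\omega-\epsilon\eta$ is still a $G$-invariant symplectic form for small $\epsilon>0$ (nondegeneracy is open), its canonical class is unchanged, and its class lies in $\hat{C}(X,G,F)$ with parameter $\delta_{X,G,F}-\epsilon<\delta_{X,G,F}$, contradicting the definition of the infimum. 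Likewise, the fact that everything above $\delta_{\omega,F}$ is realized should not be derived from ``convexity of the equivariant symplectic cone'' (which is nowhere established); it follows from the explicit deformation $\omega^\prime=\omega+(\delta-\delta_{\omega,F})\pi^\ast\eta$ with $\eta$ an area form of total area $1$ on the base of the $G$-conic bundle $\pi$, which is manifestly $G$-invariant and symplectic. With these two elementary constructions, part (3) is complete and requires no Gromov--Witten input at all.
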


We conjecture that when there are two distinct fiber classes, the equivariant symplectic cone 
must contain the class of a monotone form. Furthermore, it is an interesting problem to determine
the gap functions $\delta_{X,G,F}$ for a given minimal rational $G$-surface $X$ with 
$\text{Pic}(X)^G=\Z^2$. We shall leave these studies for a future occasion. 

\vspace{3mm}

The organization of this paper is as follows. Section 2 is concerned with the proof of the structural
theorem, Theorem 1.3. In section 2.1 we collect some preliminary lemmas on minimal symplectic 
$G$-conic bundles. In section 2.2 we review a reduction process for the exceptional classes 
of a rational surface which plays an essential role in the proof of Theorem 1.3. Sections 2.3
and 2.4 are devoted to the proof of Theorem 1.3. Section 3 is concerned with the analysis of
the structure of group $G$. Proofs of Theorems 1.5 and 1.8 are presented here. Finally,
section 4 is devoted to the discussion on equivariant symplectic minimality and equivariant symplectic cones. In particular, we prove Theorems 1.10 and 1.12. At the end of section 4,
we also include a uniqueness result on the subgroup $Q$ in Definition 1.7. 

\vspace{3mm}

\noindent{\bf Acknowledgments:}

This long overdue project started from the \textit{Workshop and Conference on Holomorphic Curves and Low Dimensional Topology} in Stanford, 2012.
We are grateful to Igor Dolgachev for inspiring conversations during the \textit{FRG conference on symplectic birational geometry 2014} in University of Michigan.  This work was partially supported by NSF Focused Research Grants DMS-0244663 and DMS-1065784, under \textit{FRG: Collaborative Research: Topology and Invariants of Smooth $4$-Manifolds}.


\section{The structure of $(X,\omega)$}

\subsection{Preliminary lemmas on symplectic $G$-conic bundles}

Let $(X,\omega)$ be a symplectic rational $G$-surface, where 
$X=\C\P^2\# N\overline{\C\P^2}$, and let $\pi: X\rightarrow \s^2$ be
a symplectic $G$-conic bundle on $(X,\omega)$. We first observe that 
each singular fiber of $\pi$ consists of a pair of $(-1)$-spheres. To see this,
let $C_1,C_2$ be the components of a singular fiber, which are embedded 
$J$-holomorphic spheres. Since $C_1,C_2$ are isolated $J$-curves, their self-intersection
must be negative. On the other hand, $C_1\cdot C_2=1$, so it follows easily from
$(C_1+C_2)^2=0$ that both of $C_1,C_2$ are $(-1)$-spheres.

There are $N-1$ singular fibers. We shall pick a $(-1)$-sphere from each singular fiber
and name the homology classes by $E_2,\cdots, E_N$. Then there is a unique pair of line class and exceptional class $H$ and  $E_1$
such that 

\begin{enumerate}[(i)]
  \item $H-E_1$ is the fiber class of $\pi$,
  \item $H,E_1, E_2,\cdots, E_N$ form a basis
of $H_2(X)$ with standard intersection matrix.
\end{enumerate}

 With this understood, observe that

 \begin{enumerate}[(1)]
   \item for each $(-1)$-sphere $E_j$, $j=2,\cdots, N$, the other $(-1)$-sphere lying in the same
singular fiber has homology class $H-E_1-E_j$,
   \item the canonical class $K_\omega=-3H+
E_1+\cdots +E_N$.
 \end{enumerate}

There are further consequences if the symplectic $G$-conic bundle is minimal. In this case, 
for each $(-1)$-sphere $E_j$, $j=2,\cdots,N$, there exists a $g\in G$ such that 
$g\cdot E_j=H-E_1-E_j$. This implies
$$
\omega(E_j)=\frac{1}{2}\omega(H-E_1), \;\; j=2,\cdots, N.
$$

Thus a minimal symplectic $G$-conic bundle falls into three cases:
$$
(i) \; \omega(E_1)=\omega(E_j), \; (ii) \; \omega(E_1)>\omega(E_j), \; (iii)\;  \omega(E_1)<\omega(E_j).
$$
Case (i) occurs iff $(X,\omega)$ is monotone. Since $E_1$ is a section class, we shall call
case (ii) (resp. case (iii)) a symplectic $G$-conic bundle with \textit{small fiber area} (resp. \textit{large
fiber area}).

The following lemma is the $J$-holomorphic analog of Lemma 5.1 in \cite{DI}. 

\begin{lemma}\label{l:selfint}
Let $\pi:X\rightarrow \s^2$ be a symplectic $G$-conic bundle on $(X,\omega)$,
which comes with a $G$-invariant, $\omega$-compatible almost complex structure $J$.
Suppose $E, E^\prime$ are two distinct $J$-holomorphic sections of self-intersection 
$-m,-m^\prime$, and let $r$ be the number of singular fibers where $E,E^\prime$ intersect 
the same component. Then
$$
N-1=r+m+m^\prime+2E\cdot E^\prime.
$$
Moreover, if the symplectic $G$-manifold $(X,\omega)$ is minimal, then $N\geq 5$ must be true.
\end{lemma}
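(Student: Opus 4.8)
The plan is to compute the intersection-theoretic identity first, and then derive the lower bound on $N$ as a consequence of minimality. For the identity, I would work in the basis $H,E_1,\dots,E_N$ adapted to the conic bundle, where the fiber class is $F = H-E_1$ and the $N-1$ singular fibers have components $E_j$ and $F-E_j = H-E_1-E_j$ for $j=2,\dots,N$. A section class $S$ is characterized by $S\cdot F = 1$; writing $S = aH + \sum b_i E_i$, the conditions $S\cdot F=1$ and $S^2 = -m$ translate into explicit relations among $a, b_i, m$. The key geometric input is that for each singular fiber, $S$ meets exactly one of the two components $E_j$, $F-E_j$ (since $S$ is a section and the total fiber is $F$), so $S\cdot E_j \in \{0,1\}$ and these account for how the coefficients $b_j$ distribute. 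Doing this for both $E$ (self-intersection $-m$) and $E'$ (self-intersection $-m'$) and expanding $(E - E')\cdot F$, $E\cdot E'$, together with the count $r$ of common-component fibers, should collapse to $N - 1 = r + m + m' + 2\,E\cdot E'$ after bookkeeping. The cleanest route is probably to use the adjunction/genus formula: each $J$-holomorphic section is a sphere, so $E^2 + K_\omega\cdot E = -2$, i.e. $-m + K_\omega\cdot E = -2$, and similarly for $E'$; subtracting and using $K_\omega = -3H + E_1 + \dots + E_N$ (hence $K_\omega\cdot F = -1$, so $K_\omega$ restricted to each section is the relative canonical plus the $-1$ from the fiber) pins down $K_\omega\cdot E$ in terms of how many singular-fiber components $E$ passes through. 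I expect the identity then drops out by comparing the two expressions for $E\cdot E'$ read off from the class decompositions versus from the fiberwise intersection pattern.

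For the second assertion, suppose the symplectic $G$-manifold $(X,\omega)$ is minimal and $\pi$ is a (necessarily minimal, by the last bullet after the Definition) symplectic $G$-conic bundle with $N-1$ singular fibers. The section class $E_1$ has self-intersection $-1$; if $(X,\omega)$ is minimal it cannot be represented by a symplectic $(-1)$-sphere whose $G$-orbit is a disjoint union of $(-1)$-spheres, so in particular $E_1$ is not $G$-invariant, and some $g\in G$ moves $E_1$ to another section class $E' := g\cdot E_1$, which is again a $J'$-holomorphic section for $J' = g_* J$ — but since $J$ is $G$-invariant, $E'$ is $J$-holomorphic as well. Thus $E_1$ and $E'$ are two distinct $J$-holomorphic sections, both of self-intersection $-1$. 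Applying the identity with $m = m' = 1$ gives $N - 1 = r + 2 + 2\,E_1\cdot E'$, hence $N \geq 3 + r + 2\,E_1\cdot E' \geq 3$; to upgrade $N\geq 3$ to $N\geq 5$ I would argue that $r$ and $E_1\cdot E'$ cannot both be too small. The point is that $E_1$ and $E' = g\cdot E_1$ are genuinely distinct exceptional section classes inside the conic bundle, and being in the same $G$-orbit forces $\omega(E_1) = \omega(E')$; combined with the area constraints $\omega(E_j) = \tfrac12\omega(F)$ for $j\geq 2$ and the fact that a section meeting the $E_j$-component contributes a half-fiber worth of area, a parity/area comparison should force $r + 2\,E_1\cdot E' \geq 2$, giving $N\geq 5$.

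The main obstacle I anticipate is precisely this last upgrade from $N\geq 3$ to $N\geq 5$: ruling out the borderline configurations $r + 2\,E_1\cdot E' = 0$ and $= 1$. The case $E_1\cdot E' = 0$, $r = 0$ would mean the two sections are disjoint and pass through opposite components in every singular fiber — one must show this is incompatible with $E_1, E'$ lying in the same $G$-orbit (equal areas) together with the small/large-fiber-area trichotomy, perhaps by observing that swapping all components via $g$ would have to reverse an area inequality; and the case $r + 2E_1\cdot E' = 1$, forcing $E_1\cdot E' = 0$ and $r = 1$, should be excluded by a similar parity argument — note $N - 1 = r + m + m'$ would be odd, but one can also play off the $G$-action permuting singular fibers. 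I would handle both by a short case analysis using the area identities $\omega(E_1) = \omega(E')$ and $\omega(E_j) = \tfrac12\omega(F)$ rather than by any delicate $J$-holomorphic curve argument, since the structural constraints from minimality of the conic bundle are already strong enough.
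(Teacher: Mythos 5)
Your derivation of the identity is essentially the paper's: both proofs rest on the observation that a section meets exactly one component of each singular fiber, so that $E'-E=bF+\sum_s b_sE_s$ with $b_s=\pm1$ precisely on the $N-1-r$ fibers where the two sections separate, and then $(E'-E)^2=-(N-1-r)$ gives the formula. (The adjunction detour you mention is unnecessary.) The real problem is in the second half, where there are two genuine gaps. First, you assume the class $E_1$ is represented by an irreducible $J$-holomorphic $(-1)$-section for the \emph{given} $G$-invariant $J$. In general it is only represented by a stable curve; the paper extracts from it the unique section component $E$ and writes $E_1=a(H-E_1)+\sum_{s>1}a_sE_s+E$, so that $E^2=-2a-1-\sum_s a_s$ may well be $\leq -2$. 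That case is not a nuisance but a help: $m+m'\geq 4$ already forces $N\geq 5$ with no information about $r$ or $E\cdot E'$.

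Second, and more seriously, your plan to upgrade $N\geq 3$ to $N\geq 5$ in the genuine $(-1)$-section case by an area/parity argument does not work. Take $N=3$ with a monotone form: $E_1$ and $H-E_2-E_3$ are disjoint $(-1)$-sections of the conic bundle with fiber class $H-E_1$, they meet opposite components of both singular fibers ($r=0$, $E_1\cdot E'=0$), and they have equal area. So equality of areas together with $\omega(E_j)=\tfrac12\omega(F)$ cannot rule out the borderline configuration $r+2E\cdot E'=0$; nothing in the numerics obstructs it. What rules it out is a different use of minimality: if \emph{every} $g\cdot E$ distinct from $E$ were disjoint from $E$, the orbit $\{g\cdot E\}$ would be a $G$-invariant disjoint union of $(-1)$-spheres, contradicting minimality of the symplectic $G$-manifold. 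Hence you may \emph{choose} $g$ so that $g\cdot E$ actually intersects $E$, and positivity of intersections gives $E\cdot E'\geq 1$, i.e.\ $2E\cdot E'\geq 2$, whence $N\geq 1+0+1+1+2=5$. You state the relevant minimality principle early in your second paragraph but then only use it to conclude $g\cdot E_1\neq E_1$; the extra mileage — choosing $g$ so that the two curves meet — is exactly the missing step.
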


\begin{proof}

Since $E_1,F=H-E_1,E_s,s>1$ generate $H^2(X)$, both $E$ and $E'$ have the form 
\begin{equation}\label{e:decomp}
     E_1+cF+\sum_{t>1}c_tE_t,
\end{equation}
where $c\in\Z$, and $c_t=0$ or $1$ depending on which component they intersect on each singular fiber.

Since $E,E^\prime$ are sections, we have
 $$
  E^\prime-E=bF +\sum_s b_s E_s,
 $$
 where $b\in\Z$ and $b_s=\pm 1$ with $s$ running over the set of singular fibers at where
 $E^\prime, E$ intersect \textit{different} components. Note that the number of $s$ is exactly $N-1-r$.
 Now 
 $$(E^\prime-E)^2= \sum_s b_s^2 E_s^2=-(N-1-r),$$
 which gives
 $$
 N-1=r+m+m^\prime+2E\cdot E^\prime.
 $$

Now suppose $(X,\omega)$ is minimal. We will show that $N\geq 5$ in this case.
First, we claim that the sections $E,E^\prime$ in the lemma do exist. This is because 
the class $E_1$ can be represented by a $J$-holomorphic stable curve.  By checking the intersection with $H-E_1$, the $E_1$ stable representative contains a unique  $J$-holomorphic section $E$.  Note that $E,F=H-E_1,E_s,s>1$ also form a basis of $H_2(X)$, and 
$E_1$ can be written as
 $$
 E_1=a(H-E_1)+\sum_{s>1} a_s E_s+E,
 $$ 
 where $a\geq 0$, and $a_s=0$ or $1$ depending on which component that $E$ intersects at each singular fiber. 
 Note that 
 $$
 E^2=-2a-1-\sum_{s>1} a_s.
 $$
 We take $E^\prime=g\cdot E$ for some $g\in G$ such that $E^\prime\neq E$. Such a $g\in G$ exists 
 because $E$ must intersect a singular fiber and there is a $g\in G$ which switches the two 
 $(-1)$-spheres in that singular fiber. Note that $E^\prime$ is a section, as the fiber class is $G$-invariant so that $E^\prime \cdot (H-E_1)=E\cdot(H-E_1)=1$. Furthermore, note that $(E^\prime)^2=E^2$. Consequently, if $E^2\leq -2$, we must have
 $$
 N=1+r+m+m^\prime+2 E\cdot E^\prime\geq 1+0+2+2+0=5.
 $$
 If $E^2>-2$, then $a=a_s=0$ and $E=E_1$ must be a $(-1)$-sphere. The minimality
 assumption then implies that $E^\prime=g\cdot E$ for some $g\in G$ can be chosen such that 
 $E^\prime$ intersects $E$. In this case,  we have 
 $$
 N= 1+r+m+m^\prime+2 E\cdot E^\prime\geq 1+0+1+1+2=5.
 $$
 This finishes off the proof of Lemma \ref{l:selfint}.
 
 \end{proof}

 \begin{lemma}\label{l:ConicRank}
 Let $(X,\omega)$ be a symplectic rational $G$-surface which admits a minimal
 symplectic $G$-conic bundle structure. Then the invariant lattice $H^2(X)^G$ has
 rank $2$ which is spanned by $K_\omega$ and the fiber class of the  
 symplectic $G$-conic bundle.
  \end{lemma}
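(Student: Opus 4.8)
The plan is to show that $H^2(X)^G$ has rank at most $2$ and contains two independent invariant classes; the latter half is immediate since both $K_\omega$ and the fiber class $F = H - E_1$ lie in $H^2(X)^G$ (the canonical class is always $G$-invariant, and $F$ lies in the invariant lattice because $G$ preserves the Lefschetz fibration, as already recorded among the ``immediate consequences'' of the definition). Moreover $K_\omega\cdot F = (-3H + E_1 + \cdots + E_N)\cdot(H-E_1) = -3 - 1 = -2 \neq 0$ while $F^2 = 0$, so $K_\omega$ and $F$ are linearly independent over $\R$, hence span a rank-$2$ sublattice. It remains to prove the reverse inequality $\operatorname{rank} H^2(X)^G \le 2$.

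For the upper bound I would argue by contradiction: suppose $\operatorname{rank} H^2(X)^G \ge 3$. The strategy is to produce from a suitable invariant class a third $J$-holomorphic section of the conic bundle — or more precisely, enough independent section-type classes — and then play them off against each other using the numerical identity of Lemma \ref{l:selfint}. Concretely, since $F$ and $K_\omega$ span a rank-$2$ subspace inside the (assumed) rank-$\ge 3$ space $H^2(X;\R)^G$, there is an invariant class $\alpha$ not in $\operatorname{span}_\R\{F, K_\omega\}$; intersecting with $F$ we may normalize $\alpha\cdot F$ to be $0$ or $1$, and by adding multiples of $F$ arrange that $\alpha$ is ``reduced'' relative to the basis $E_1, F, E_2,\dots,E_N$ in the sense of the decomposition \eqref{e:decomp}. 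The key point is that the proof of Lemma \ref{l:selfint} already exhibits a distinguished $J$-holomorphic section $E$ (sitting inside a stable representative of $E_1$), and that the orbit $G\cdot E$ together with this invariant data forces $N$ to be small; but more importantly, an invariant class $\alpha$ transverse to $\langle F, K_\omega\rangle$ would give, after pairing with the classes $E_j$ and $H - E_1 - E_j$ in each singular fiber, an \emph{invariant} choice of one of the two $(-1)$-spheres in each singular fiber — contradicting minimality of the conic bundle, which by definition demands an element of $G$ switching the two components of \emph{every} singular fiber.

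Making this last step precise is where the real work lies, and I expect it to be the main obstacle. The cleanest route is probably: restrict the intersection form to the rank-$(N-1)$ sublattice spanned by the differences $E_j - (H - E_1 - E_j) = 2E_j - F$ (equivalently, work modulo $F$), observe that $G$ permutes the $N-1$ singular fibers and, within each, the minimality hypothesis says some $g\in G$ acts by the reflection swapping the two components — so the $G$-representation on this sublattice (tensored with $\Q$) is, fiber-orbit by fiber-orbit, a sum of sign representations with no nonzero invariants unless a whole orbit of fibers is fixed pointwise by all of $G$, which minimality of the conic bundle forbids. Combining this with the fact that modulo $F$ the lattice $H^2(X)$ is generated by $E_1$ and the $E_j$, and that $E_1$ contributes (together with $F$ and $K_\omega$) the rank-$2$ invariant part already found, yields $H^2(X)^G \otimes \Q = \operatorname{span}_\Q\{K_\omega, F\}$, which is exactly the claim. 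I would double-check the bookkeeping between $E_1$ versus a general section $E$ (they differ by the ``$a$'' and ``$a_s$'' terms in the proof of Lemma \ref{l:selfint}) to be sure no stray invariant direction hides in the section coordinate; this is routine but needs care.
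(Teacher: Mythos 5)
Your overall strategy coincides with the paper's: split $H^2(X;\R)$ into the rank-$2$ piece spanned by $K_\omega$ and $F=H-E_1$ and a complementary piece on which $G$ acts by signed permutations indexed by the singular fibers, then use minimality of the conic bundle to kill invariants in the complement. (The paper passes to the quotient $H^2(X;\R)/\mathrm{Span}_{\R}(K_\omega,F)$ with basis the images of $E_2,\dots,E_N$; you take the orthogonal complement, spanned by $v_j=2E_j-F$. Since $K_\omega\cdot v_j=F\cdot v_j=0$ and the form is nondegenerate on $\mathrm{Span}(K_\omega,F)$, these are the same $G$-representation.)

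Two points need repair. First, your criterion ``no nonzero invariants unless a whole orbit of fibers is fixed pointwise by all of $G$'' is not the right dichotomy for a signed permutation representation, and your earlier ``invariant choice of a component in each fiber'' only contradicts minimality at those fibers where $\alpha\cdot E_j\neq\alpha\cdot(H-E_1-E_j)$. What actually closes the argument is simpler, and you already have the ingredients: for each $j$, minimality supplies $g_j\in G$ that preserves the $j$-th singular fiber and swaps its components, so $g_j\cdot v_j=-v_j$ while $g_j\cdot v_k=\pm v_{\sigma(k)}$ with $\sigma(j)=j$; comparing the $v_j$-coefficient of $x$ and of $g_j\cdot x$ for an invariant $x=\sum_k a_kv_k$ gives $a_j=-a_j=0$, for every $j$. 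Equivalently: an invariant $\alpha$ must satisfy $\alpha\cdot(2E_j-F)=0$ for all $j=2,\dots,N$, and since the $2E_j-F$ are linearly independent, the solution space is exactly the $2$-dimensional space $\mathrm{Span}_{\R}(K_\omega,F)$. (The paper instead fixes one such $g$ and partitions the index set; your fiber-by-fiber version is, if anything, cleaner.) Second, the lemma asserts that the integral lattice $H^2(X)^G$ is spanned by $K_\omega$ and $F$ over $\Z$, so ``$H^2(X)^G\otimes\Q=\mathrm{Span}_\Q\{K_\omega,F\}$'' is not yet ``exactly the claim'': you still must check that an integral invariant class $\alpha=aK_\omega+bF$ with $a,b\in\Q$ has $a,b\in\Z$, which follows by pairing with $E_N$ (giving $-a\in\Z$) and with $E_1$ (giving $-a+b\in\Z$). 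Finally, the digression about producing a third $J$-holomorphic section and invoking Lemma \ref{l:selfint} is not needed and can be dropped.
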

 
 \begin{proof}
 
 First, we show that $H^2(X;\R)^G$ is $2$-dimensional.
To see this, we first note that  $K_\omega, H-E_1, E_2,\cdots,E_N$ form a basis of 
$H^2(X,\R)$.  We set 
 $$
 V=H^2(X;\R)/\text{Span }_\R(K_\omega,H-E_1).
 $$
 Then it suffices to show that $V^G=\{0\}$ because $K_\omega,H-E_1\in H^2(X)^G$.
 
 We let $e_2,\cdots,e_N$ be the image of $E_2,\cdots,E_N$ under the quotient map, which
 form a basis of $V$. Suppose to the contrary, there is a $x\neq 0$ in $V^G$. We write
 $x=\sum_{k=2}^N a_k e_k$. Then there exists a $k_0$ such that $a_{k_0}\neq 0$.
 With this understood, we note that by the minimality assumption,
 there is a $g\in G$ such that $g\cdot E_{k_0}=H-E_1-E_{k_0}$,
 which means that $g\cdot e_{k_0}=-e_{k_0}$. Now we set $I=\{k| g\cdot e_k=-e_k\}$. Then
 $k_0\in I$; in particular, $I\neq \emptyset$. We let $J$ be the complement of $I$ in the set
 $\{2,\cdots,N\}$. Then it follows easily that if $k\in J$, then $g\cdot e_k=\pm e_l$ for some 
 $l\in J$ ($g\cdot E_k=E_l$ or $H-E_1-E_l$ for some $l$).
 
 With this understood, we write $x=y+z$, where $y=\sum_{k\in I} a_k e_k$ and 
 $z=\sum_{k\in J} a_ke_k$. Then $g\cdot x=-y+z^\prime$ for some $z^\prime\in 
 \text{Span }_\R(e_k|k\in J)$. Since $g\cdot x=x$, we have 
 $2y=z^\prime-z\in \text{Span }_\R(e_k|k\in J)$. Since $e_2,\cdots,e_N$ form a basis of $V$,
 this clearly contradicts the fact that $y\in \text{Span }_\R(e_k|k\in I)$ and $y\neq 0$. Hence the 
 claim that $H^2(X;\R)^G$ is $2$-dimensional.

 Now for any $\alpha\in H^2(X)^G$, we write $\alpha=aK_\w+b (H-E_1)$ for some $a,b\in\Q$.
 Then $\alpha\cdot E_N=-a$, implying $a\in \Z$. On the other hand, $\alpha\cdot E_1=-a+b$,
 we have $b\in \Z$ also. Hence $H^2(X)^G$ is spanned by $K_\w$ and $H-E_1$.

\end{proof}

\begin{lemma}\label{4-3}
Let $\pi: X\rightarrow\s^2$ be a minimal symplectic $G$-conic bundle where $X=\C\P^2
\# N\overline{\C\P^2}$ with $N\geq 6$ and even. Let $J$ be any $G$-invariant, compatible
almost complex structure, and let 
$$m_J=max\{m\in \Z: \textit{there is a $J$-holomorphic section of $\pi$ of self-intersection }-m\}.$$

 Then $m_J\leq (N-4)/2$.  
In particular, when $N=6$, $m_J\leq 1$.
 \end{lemma}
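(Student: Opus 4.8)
The plan is to run a parity-and-counting argument using Lemma \ref{l:selfint}, exactly in the spirit of the preceding lemma but pushed one notch further under the hypothesis that $N$ is even. First I would take a $J$-holomorphic section $E$ realizing the maximum self-intersection $-m_J$; concretely, represent the class $E_1$ (or any section class) by a $J$-holomorphic stable curve and extract its unique section component as in the proof of Lemma \ref{l:selfint}, then replace it by one of maximal self-intersection among $J$-holomorphic sections (the set of such sections is finite since each has bounded $\omega$-area once $K_\omega$ and the fiber class are pinned down). By minimality there is a $g\in G$ switching the two $(-1)$-spheres in some singular fiber through which $E$ passes, so $E':=g\cdot E$ is a distinct $J$-holomorphic section with $(E')^2=E^2=-m_J$. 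Apply Lemma \ref{l:selfint} to the pair $E,E'$: with $m=m'=m_J$ we get
$$N-1 = r + 2m_J + 2\,E\cdot E',$$
where $r$ is the number of singular fibers on which $E,E'$ hit the same component.

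The key point is a parity constraint forcing $r$ to be bounded away from $0$. Since $N$ is even, $N-1$ is odd, and $2m_J + 2 E\cdot E'$ is even, so $r$ must be odd; in particular $r\geq 1$. That alone gives $N-1 \geq 1 + 2m_J$, i.e. $m_J \leq (N-2)/2$, which is not yet sharp. To gain the extra unit I would exploit that $E\cdot E' \geq 1$: indeed $g$ was chosen so that $E$ and $E'=g\cdot E$ both meet the same singular fiber, and on that fiber $E$ hits one $(-1)$-sphere while $E'=g\cdot E$ hits its image, i.e. the other $(-1)$-sphere, so the two sections pass through different components there — but they are sections of a genus-$0$ Lefschetz fibration and distinct sections of such a fibration intersect nonnegatively; the real input is that generically I can arrange (again using that $G$ acts transitively on the two components of that fiber, and choosing the singular fiber appropriately) a pair with $E\cdot E'\geq 1$, while $r\geq 1$ still holds by the parity argument applied to that pair. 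Combining $r\geq 1$ and $E\cdot E'\geq 1$ yields $N-1 \geq 1 + 2m_J + 2$, hence $m_J \leq (N-4)/2$, and for $N=6$ this reads $m_J\leq 1$.

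The main obstacle I anticipate is the simultaneous control of $r$ and $E\cdot E'$: the naive choice of $g$ guarantees $E\cdot E'\geq 1$ but could in principle make $r$ small or even $0$ for the wrong parity reasons, while a choice optimizing $r$'s parity need not obviously keep $E\cdot E'$ positive. The resolution I would pursue is to note that $r$ and $E\cdot E'$ are not independent once $N$ is fixed — the displayed identity ties them together — so it suffices to show that no pair of distinct maximal sections can have $(r, E\cdot E') = (N-1-2m_J, 0)$ with $r$ even, i.e. to rule out $r$ even, which is precisely the parity observation above since $N-1$ is odd. Thus the genuinely load-bearing steps are (a) existence of a maximal-self-intersection $J$-holomorphic section and of a distinct $G$-translate of it (both from minimality, as in Lemma \ref{l:selfint}), (b) the identity $N-1 = r + 2m_J + 2E\cdot E'$ from Lemma \ref{l:selfint}, (c) the parity forcing $r$ odd hence $r\geq 1$, and (d) arranging $E\cdot E'\geq 1$ by choosing $g$ to switch the components in a fiber met by $E$; these together give the bound, with the $N=6$ case immediate.
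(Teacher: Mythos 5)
Your first half is fine and matches the paper: take a section $E$ with $E^2=-m$, use minimality to produce a distinct translate $E'=g\cdot E$ with the same self-intersection, and apply Lemma \ref{l:selfint} to get $N-1=r+2m+2E\cdot E'$; since $N-1$ is odd this forces $r\geq 1$ and hence $m\leq (N-2)/2$. The gap is in the step where you try to gain the extra unit by ``arranging'' $E\cdot E'\geq 1$. This cannot be done: in the borderline case $m=(N-2)/2$ the identity reads $r+2E\cdot E'=1$, so positivity of intersections forces $E\cdot E'=0$ and $r=1$ for \emph{every} pair of distinct $J$-holomorphic sections of self-intersection $-(N-2)/2$ --- no choice of $g$ or of singular fiber can produce a pair with $E\cdot E'\geq 1$. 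Your proposed resolution (``rule out $r$ even'') is also vacuous here, since the bad configuration has $r=1$, which is odd; the parity argument alone never gets you past $(N-2)/2$.

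The missing idea is a \emph{third} section. The paper argues: in the borderline case $E$ and $E'$ are disjoint and meet the same component on exactly one singular fiber $F$; take $h\in G$ switching the two $(-1)$-spheres of that particular fiber $F$, so that $h\cdot E$, $E$, $E'$ are pairwise distinct sections of self-intersection $-(N-2)/2$. On $F$ the section $h\cdot E$ disagrees with both $E$ and $E'$, while on each of the other $N-2$ singular fibers it agrees with exactly one of them (since $E,E'$ pass through different components there); hence $r(h\cdot E,E)+r(h\cdot E,E')=N-2$. But applying Lemma \ref{l:selfint} to each of the two new pairs forces $r(h\cdot E,E)=r(h\cdot E,E')=1$, so $N-2=2$, contradicting $N\geq 6$. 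You would need to supply this (or an equivalent) argument to close the gap.
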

 
 \begin{proof}
 First, note that if $E$ is a $J$-holomorphic section of self-intersection $-m$, then there is
 a $g\in G$ such that $g\cdot E\neq E$, where $g\cdot E$ is also of self-intersection $-m$.
 This is because $E$ must intersect a singular fiber, and by the minimality assumption there
 is a $g\in G$ which switches the two components of that singular fiber. Clearly for this $g$,
 $g\cdot E\neq E$. Now
 apply Lemma  \ref{l:selfint} to $E$ and $E^\prime=g\cdot E$, we see that $m\leq (N-2)/2$
 as $N$ is even. Hence we reduced the lemma to showing $m\neq \frac{N-2}{2}$.
 
 Suppose to the contrary that $m=(N-2)/2$, and let $E,E^\prime$ be a pair of sections 
 whose self-intersection equals $-m$. Then by Lemma \ref{l:selfint}, we see that $E,E^\prime$ are disjoint and $r=1$. Let $F$ be the singular fiber where $E,E^\prime$ intersect the
 same $(-1)$-sphere. Then again by the minimality assumption there is a $h\in G$ which 
 switches the two $(-1)$-spheres in $F$. It follows easily that $h\cdot E$, $E$,
 and $E^\prime$ are distinct. Let $r,r^\prime$ be the number of singular fibers where 
 $h\cdot E$, $E$ and $h\cdot E$, $E^\prime$ intersect the same $(-1)$-sphere, respectively. 
 Then it follows easily that $r+r^\prime=N-2$. On the other hand, Lemma \ref{l:selfint} implies that $r=r^\prime=1$,
 contradicting the fact that $N\geq 6$. Hence the lemma.
 
 \end{proof}

\subsection{Reduction of exceptional classes} 
\label{sub:reduction_of_exceptional_classes}


Momentarily we let $\omega$ be any symplectic structure on $X=\C\P^2\# N\overline{\C\P^2}$, 
and denote 
 $$\E_\omega=\{e\in H^2(X)| e^2=-1, K_\omega\cdot e= -1, \omega(e)>0\}$$
 which may depend on $\omega$. The following fact is crucial in our considerations.

\begin{lemma}\label{l:Pinsonnault}
(cf. \cite{KK})
Assume $N\geq 2$. Then for any $\omega$-compatible almost complex structure $J$ on $X$, each class $E\in\E_\omega$ 
with minimal
area, i.e., $\omega(E)=\min_{e\in\E_\omega} \omega(e)$, is represented by an {\it embedded}
$J$-holomorphic sphere.
\end{lemma}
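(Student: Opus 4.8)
The plan is to run the standard Gromov compactness argument for $J$-holomorphic spheres in a class of minimal area, combined with positivity of intersections, exactly as in Pinsonnault's and Li--Zhang's treatment of exceptional classes in rational/ruled $4$-manifolds. First I would observe that for any $\omega$-compatible $J$, the class $E\in\E_\omega$ has Gromov--Witten-theoretic expected dimension zero: from $E^2=-1$ and $K_\omega\cdot E=-1$, the index formula gives that the moduli space of $J$-holomorphic spheres in class $E$ has virtual dimension $2(c_1(E)-1) = 2(-K_\omega\cdot E - 1) = 0$, and the nonvanishing of the genus-zero Gromov--Witten invariant of the class $E$ in a blown-up rational surface (a standard fact, since $E$ is the class of an exceptional sphere in \emph{some} reduced basis after acting by the diffeomorphism group, or more directly because the GW invariant is diffeomorphism-invariant and nonzero for a standard exceptional class) guarantees that class $E$ is represented by some (possibly nodal, possibly multiply covered) $J$-holomorphic stable map.

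Next I would analyze the possible degenerations. Suppose the stable map representing $E$ is not a single embedded sphere. Then its image decomposes as a connected union of $J$-holomorphic curves, and writing $E = \sum_i m_i A_i$ with $m_i\geq 1$ and each $A_i$ an irreducible $J$-holomorphic class with $\omega(A_i)>0$, we have $\omega(E) = \sum_i m_i\,\omega(A_i)$. The key numerical point is to show each $A_i$ (or at least one of them) must itself lie in $\E_\omega$, contradicting minimality of $\omega(E)$ unless the decomposition is trivial. Here one uses: (a) $c_1=-K_\omega$ is represented and adjunction for the irreducible components; (b) since $E^2=-1<0$, the component carrying $E$ with nonzero coefficient is the unique one with negative square and all others are non-negative; (c) a bookkeeping argument on $\sum m_i(-K_\omega\cdot A_i)=1$ together with $-K_\omega\cdot A_i\geq 1$ for any class represented by a non-multiply-covered $J$-sphere of non-negative square in a rational surface (and $\geq 1$ for any exceptional sphere class) forces exactly one $A_i$ with $m_i=1$, $-K_\omega\cdot A_i=1$, and all other contributions zero — but $\omega(A_j)>0$ for those others is then impossible, so the decomposition is a single class $A_1=E$ with $m_1=1$. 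Finally, multiple-coverness is excluded because $\omega(E)$ minimal forces the underlying simple class to have strictly smaller area unless the cover is trivial. Once we know $E$ is represented by a simple $J$-holomorphic sphere, the adjunction inequality $E^2 = -1 = 2g-2 + [\text{non-negative correction}] = -2 + (\text{singularity contributions})$ forces the sphere to be embedded.

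The main obstacle I expect is step two, the combinatorial exclusion of nontrivial bubbling and multiple covers: one must be careful that the pairing $-K_\omega\cdot A_i\geq 1$ genuinely holds for every irreducible $J$-holomorphic class that can appear (in particular for classes of non-negative self-intersection and for exceptional classes, using that $(X,\omega)$ is a blow-up of a rational surface and hence $K_\omega\cdot A\leq -1$ fails only for classes that are not represented by $J$-curves, e.g.\ multiples of the fiber in a ruled piece), and that the final embeddedness comes out of adjunction with the correct sign. This is precisely the content cited from \cite{KK}, so in the write-up I would invoke that reference for the technical heart and only sketch the reduction above; the role of the lemma in this paper is just to produce, for \emph{any} $G$-invariant $J$, an embedded $J$-sphere in the minimal-area exceptional class, which then can be averaged/intersected against the $G$-action in later arguments.
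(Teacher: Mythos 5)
The paper gives no proof of this lemma at all: it is stated with the citation ``(cf.\ [KK])'' and used as a black box, so your decision to defer the technical heart to Karshon--Kessler is exactly what the authors do. The issue is that the sketch you offer of that technical heart contains two claims that are false as stated, so it would not survive as a standalone argument. First, it is not true that in a decomposition $E=\sum_i m_i A_i$ with $E^2=-1$ there is a \emph{unique} component of negative square: for example $E_1=(E_1-E_2)+E_2$ exhibits two components of squares $-2$ and $-1$, and degenerations of exactly this shape do occur for non-generic $J$ when the area hypothesis is dropped. Second, the bookkeeping ``$\sum_i m_i(-K_\omega\cdot A_i)=1$ with each $-K_\omega\cdot A_i\ge 1$ forces a single term'' fails because irreducible $J$-spheres with $-K_\omega\cdot A_i\le 0$ genuinely appear as components of Gromov limits in rational surfaces --- e.g.\ $(-2)$-sphere classes such as $E_i-E_j$ or $H-E_i-E_j-E_k$ have $-K_\omega\cdot A=0$, and more negative spheres give negative contributions --- and adjunction only supplies the \emph{upper} bound $-K_\omega\cdot A\le A^2+2$, not a lower bound, even for components of non-negative square.

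The mechanism that actually closes the argument (and is the content of the cited lemma) is the minimality of the area, not a $c_1$ count: one shows that any nontrivial stable-map degeneration of an exceptional class must contain among its components an exceptional class of strictly smaller $\omega$-area, contradicting $\omega(E)=\min_{e\in\E_\omega}\omega(e)$. In the example above this is visible directly: the degeneration $E_1=(E_1-E_2)+E_2$ requires $\omega(E_1-E_2)>0$, which is impossible when $E_1$ has minimal area. Your opening step (nonvanishing of the Gromov--Witten invariant, hence existence of a stable representative for every compatible $J$) and your closing step (adjunction forces a simple sphere in a $(-1)$-class to be embedded) are both correct; it is only the middle combinatorial exclusion that needs to be replaced by the area argument, or simply quoted from [KK] as the paper does.
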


The key ingredient in the proof of Theorem \ref{t:X-structure} is a reduction procedure which involves a
certain type of standard basis of $H^2(X)$, called a {\it reduced basis}. We shall begin by
a brief digression and refer the reader to \cite[Proposition 4.14]{LW12} or independently \cite{BP} for more details.


Recall that a {\it reduced basis} is a basis $H, E_1,\cdots, E_N$ of $H^2(X)$ with standard
intersection matrix, where $E_i\in\E_\omega$, such that $\omega(E_N)=\min_{e\in\E_\omega}\omega(e)$, and for any $i<N$, $E_i$ satisfies the following inductive condition: let 
$\E_i=\{e\in\E_\omega| e\cdot E_j=0 \;\; \forall i<j\}$, then $\omega(E_i)=\min_{e\in\E_i}\omega(e)$. Furthermore, the canonical class $K_\omega=-3H+E_1\cdots+E_N$. Reduced basis
always exists. 

If $N=2$, then $\E_\omega=\{E_1,E_2, H-E_1-E_2\}$. For $N\geq 3$, a reduction procedure can be
introduced, which requires the following discussions.

\vspace{2mm}

1. Introduce $H_{ijk}=H-E_i-E_j-E_k$ for $i<j<k$ and $H_{ij}=H-E_i-E_j$ for $i<j$. Then
$H_{ij}\in\E_j$, which implies that $\omega(H_{ijk})\geq 0$. 

\vspace{2mm}

2. For any $E\in \E_\omega$, write $E=aH-\sum_s b_s E_s$. Then
\begin{itemize}
\item $a\geq 0$;
\item if $a>0$, then $b_s\geq 0$ for all $s$;
\item if $a=0$, then $E=E_l$ for some $l$;
\item assume $a>0$, and let $b_i,b_j,b_k$ be the largest three coefficients (here we use the assumption $N\geq 3$), 
then $b_i\le a<b_i+b_j+b_k$, which is equivalent to $E\cdot H_{ijk}<0$ and $E\cdot(H-E_i)\ge0$.
\end{itemize}

\vspace{2mm}

3. The classes $H_{ijk}$ are represented by embedded $(-2)$-spheres, hence for each $H_{ijk}$
there is a diffeomorphism of $X$ inducing an automorphism $R(H_{ijk})$ on $H^2(X)$:
$$
R(H_{ijk})\alpha =\alpha + (\alpha\cdot H_{ijk})\cdot H_{ijk}, \;\; \forall \alpha\in H^2(X).
$$
Moreover, each $R(H_{ijk})$ has the following properties: (1) $R(H_{ijk})K_\omega=K_\omega$,
(2) $R(H_{ijk})E\in \E_\omega,\;\; \forall E\in\E_\omega$. 

\vspace{2mm}

With the preceding understood, consider any $E\in\E_\omega$, where $E=aH-\sum_s b_s E_s$ 
with $a>0$. Let $b_i,b_j,b_k$ be the largest three coefficients for some $i<j<k$. Then
it follows easily from the last bullet of item 2 that 
\begin{itemize}
\item $R(H_{ijk})E=a^\prime H-\sum_s b_s^\prime E_s$ for some $a^\prime<a$, and 
\item $\omega (R(H_{ijk})E)\leq \omega(E)$ with $``="$ iff $\omega(H_{ijk})=0$.
\end{itemize}
Set $E^\prime:= R(H_{ijk})E$. We say that {\it $E$ is reduced to $E^\prime$ by $H_{ijk}$}. 

The operations $R(H_{ijk})$ has the following properties of our interests:

\begin{enumerate}
   \item (finite termination) By \cite[Proposition 1.2.12]{MS09}, one may find a sequence of finitely many $H_{ijk}$ for any $E\in \E_\omega$, such that after performing $R(H_{ijk})$, $E$ is reduced to $E_l$ for some $l$.
   \item (monotonicity) The symplectic area is \textbf{monotonically decreasing} during the above reduction procedure.
 \end{enumerate}

Therefore, after the reduction procedure, $\omega(E)\geq\omega (E_l)$, with $``="$ iff 
$\omega(H_{ijk})=0$ for all the
$H_{ijk}$'s involved. In particular, when $E$ has the minimal area in $\E_\omega$, i.e., 
$\omega(E)=\omega(E_N)$, then we have

\begin{equation}\label{e:Eequal}
     \omega(E_l)=\omega(E_{l+1})=\cdots =\omega(E_N),
\mbox{ and }\omega(H_{ijk})=0 \mbox{ for all the } H_{ijk}'s \mbox{ involved.}
\end{equation}

We first rule out the case of $N=2$.

\begin{lemma}\label{l:at least 3}
There are no minimal symplectic rational $G-$surface with $N=2$. 
\end{lemma}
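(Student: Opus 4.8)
The plan is to show that a minimal symplectic rational $G$-surface cannot have $N=2$ by exhibiting, under the minimality hypothesis, a $G$-invariant collection of disjoint symplectic $(-1)$-spheres, contradicting minimality. With $X = \C\P^2 \# 2\overline{\C\P^2}$, recall from the preceding discussion that $\E_\omega = \{E_1, E_2, H-E_1-E_2\}$; these are the only three exceptional classes. The strategy is to analyze how $G$ permutes this three-element set $\E_\omega$ (it must, since $G$ preserves $K_\omega$ and $\omega$, hence preserves $\E_\omega$), together with the associated embedded $J$-holomorphic spheres for a $G$-invariant compatible $J$.

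First I would fix a $G$-invariant $\omega$-compatible almost complex structure $J$, and note that each of $E_1$, $E_2$, $H-E_1-E_2$ is represented by a unique embedded $J$-holomorphic sphere (uniqueness because distinct $(-1)$-curves in these classes have non-negative intersection but $E_1\cdot E_2 = 0$, $E_1\cdot(H-E_1-E_2)=0$, $E_2\cdot(H-E_1-E_2)=0$, so all three are pairwise disjoint; and existence for the minimal-area one by Lemma \ref{l:Pinsonnault}, then for the others by a Gromov-compactness/positivity argument, or simply because the full set $\{E_1,E_2,H-E_1-E_2\}$ is $G$-invariant and at least the minimal-area class is represented). The key point is that the union $C_1 \cup C_2 \cup C_3$ of these three disjoint embedded symplectic $(-1)$-spheres is preserved setwise by $G$: since $G$ permutes $\E_\omega$ and each class has a unique $J$-holomorphic representative while $J$ is $G$-invariant, $g$ maps the representative of $E$ to the representative of $g\cdot E$.

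Next I would extract a $G$-invariant subcollection that is still a disjoint union of $(-1)$-spheres: since the three spheres $C_1, C_2, C_3$ are pairwise disjoint, the entire union $C_1\sqcup C_2\sqcup C_3$ is already a $G$-invariant disjoint union of symplectic $(-1)$-spheres. Blowing down along this collection produces $(X',\omega')$ with an induced $G$-action, contradicting the minimality of $((X,\omega),G)$. I should also observe that $G$ acting on $\{E_1, E_2, H-E_1-E_2\}$ cannot fix all three (if it did, $H^2(X)^G$ would have rank $3$, and then $G$ acts trivially on $H^2(X)$; but even in that case the three disjoint $(-1)$-spheres still give a $G$-invariant blow-down), so in every case the conclusion holds. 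The main obstacle is the careful verification that the $J$-holomorphic representatives exist and are unique and $G$-invariantly permuted — i.e., that Lemma \ref{l:Pinsonnault} plus positivity of intersections genuinely upgrades "$G$ permutes the classes" to "$G$ permutes a disjoint union of embedded symplectic spheres" — but since all pairwise intersection numbers among $E_1, E_2, H-E_1-E_2$ vanish, disjointness is automatic and no delicate configuration analysis (unlike the $N\geq 3$ cases) is needed. Hence there is no minimal symplectic rational $G$-surface with $N=2$.
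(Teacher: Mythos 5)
Your argument has a fatal arithmetic error at its core: you assert that $E_1\cdot(H-E_1-E_2)=0$ and $E_2\cdot(H-E_1-E_2)=0$, but in fact
$E_1\cdot(H-E_1-E_2)=E_1\cdot H-E_1^2-E_1\cdot E_2=0-(-1)-0=1$, and likewise $E_2\cdot(H-E_1-E_2)=1$. Only $E_1$ and $E_2$ are orthogonal; each of them meets $H-E_1-E_2$ once. (One can also see this must be so: three pairwise disjoint embedded $(-1)$-spheres spanning $H^2(X;\Z)$ would force the intersection form of $\C\P^2\#2\overline{\C\P^2}$ to be negative definite, which it is not.) Consequently the configuration $C_1\sqcup C_2\sqcup C_3$ you propose to blow down does not exist, and the "no delicate configuration analysis is needed" conclusion, as well as the fallback in your final paragraph, both collapse. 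Your proposal as written does not prove the lemma.

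The paper's proof runs differently and you should compare it with what you attempted. One fixes a $G$-invariant $J$ and takes the embedded $J$-holomorphic sphere $C$ in the minimal-area class $E_2$ (Lemma~\ref{l:Pinsonnault}); minimality forces the orbit $\Lambda=\cup_{g\in G}\,g\cdot C$ to contain two distinct spheres that actually intersect (otherwise $\Lambda$ would itself be a $G$-invariant disjoint union of $(-1)$-spheres). Since $\E_\omega=\{E_1,E_2,H-E_1-E_2\}$ and $E_1\cdot E_2=0$, the only way to get an intersecting pair in the orbit is for $H-E_1-E_2$ to appear, so it is represented by a $J$-holomorphic $(-1)$-sphere. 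The punchline is then lattice-theoretic: $H-E_1-E_2$ is the unique characteristic element of $\E_\omega$, hence is fixed by the induced $G$-action on $H^2(X)$; by positivity of intersections its $J$-holomorphic representative is unique, hence is a single $G$-invariant $(-1)$-sphere, contradicting minimality. If you want to salvage your write-up, this characteristic-element step (or some substitute for it) is the missing idea — merely knowing that $G$ permutes the three classes is not enough, since a priori the orbit structure could avoid producing an invariant disjoint collection.
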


\begin{proof}
Suppose $((X, \omega), G)$ is a minimal symplectic rational $G-$surface with  $N=2$. 
Let $\{H,E_1,E_2\}$ be a reduced basis. Then
$\E_\omega=\{E_1,E_2,H-E_1-E_2\}$. 

Fix a $G$-invariant $J$, and let $C$ be the $J$-holomorphic $(-1)$-sphere representing $E_2$
(cf. Lemma \ref{l:Pinsonnault}). We set $\Lambda:=\cup_{g\in G} g\cdot C$. Then $\Lambda$ is a union of
finitely many $J$-holomorphic $(-1)$-spheres, containing at least two distinct $(-1)$-spheres 
intersecting each other because of the minimality assumption. Since 
$\E_\w=\{E_1,E_2,H-E_1-E_2\}$, 
there are only two possibilities: (1) $\Lambda$ is a union of three $(-1)$-spheres, 
representing the classes
$E_1,E_2$ and $H-E_1-E_2$, and (2) $\Lambda$ is a union of two $(-1)$-spheres representing
the classes $E_2$ and $H-E_1-E_2$. In either case, $H-E_1-E_2$ is represented by
a $(-1)$-sphere. Since $H-E_1-E_2$ is the only characteristic element in $\E_\w$, it
must be fixed by the $G$-action, which contradicts the minimality of the symplectic 
$G$-manifold $(X,\omega)$. Hence   there are no minimal symplectic rational $G-$surface with $N=2$. 

\end{proof}

\subsection{Reduced basis for symplectic rational $G$-surfaces} 
\label{sub:reduced_basis_for_symplectic_}


\vspace{3mm}

Due to Lemma \ref{l:at least 3}, in what follows we assume that $(X,\omega)$ is a minimal symplectic rational $G$-surface, 
where $X=\C\P^2\# N\overline{\C\P^2}$ with $N\geq 3$. 

\begin{lemma}\label{l:structureLemma}
Suppose $N\geq 3$. Then one of the following must be true.
\begin{itemize}
\item [{(i)}] $(X,\omega)$ is monotone. 
\item [{(ii)}] The reduced basis of $X$ satisfies $\omega(E_1)>\omega(E_2)=\cdots =\omega(E_N)$, $\w(H-E_1)=2\w(E_j)$. 
Moreover, if $E\in \E_\omega$ has
minimal area, then either $E=E_j$ or $E=H_{1j}=H-E_1-E_j$  for some $j>1$.
\end{itemize}
\end{lemma}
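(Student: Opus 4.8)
The strategy is to run the reduction procedure of Section 2.2 on a minimal-area class, combined with the minimality hypothesis, to force a very rigid picture of the reduced basis. So fix a $G$-invariant $\omega$-compatible $J$ and let $E\in\E_\omega$ be a class of minimal area; by Lemma \ref{l:Pinsonnault} it is represented by an embedded $J$-holomorphic sphere $C$. As in the proof of Lemma \ref{l:at least 3}, set $\Lambda:=\bigcup_{g\in G} g\cdot C$; by $G$-minimality $\Lambda$ must contain two distinct $(-1)$-spheres that intersect, so there is a class $E'=g\cdot E\in\E_\omega$ with $\omega(E')=\omega(E)$ and $E\cdot E'\geq 1$. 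Now apply the reduction procedure to $E=aH-\sum_s b_s E_s$: if $a>0$, a sequence of $R(H_{ijk})$'s reduces $E$ to some $E_l$, and by \eqref{e:Eequal} all the $H_{ijk}$ involved satisfy $\omega(H_{ijk})=0$ and $\omega(E_l)=\cdots=\omega(E_N)$. The first task is to show that after this reduction we may take $E$ itself to be one of the reduced basis elements $E_j$ or one of the $H_{1j}$, i.e. that $a\in\{0,1\}$ for a minimal-area class; the case $a=0$ gives $E=E_l$ directly, and the case $a=1$ with the constraint $b_i\le a<b_i+b_j+b_k$ (from item 2) forces $E=H-E_i-E_j$ for two indices, after which one uses $G$-minimality and the reduction to re-label so that one of these indices is $1$.

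Next I would extract the area relations. Having two minimal-area classes $E,E'$ with $E\cdot E'\ge 1$, and knowing each is of the form $E_j$ or $H-E_i-E_j$, one checks the short list of possibilities for the pair $(E,E')$ and their intersection number. A key point is that $\omega(H_{ijk})=0$ for any $H_{ijk}$ appearing in the reductions; applying this to $H_{1jk}=H-E_1-E_j-E_k$ together with $\omega(H-E_1-E_j)\ge 0$ and the inductive minimality defining the reduced basis, one deduces first $\omega(E_2)=\cdots=\omega(E_N)$, and then that $\omega(H-E_1)=2\omega(E_j)$ for $j>1$: indeed $H-E_1-E_j$ and $E_j$ are both in $\E_\omega$, so $\omega(H-E_1-E_j)\ge \omega(E_N)=\omega(E_j)$, while $H-E_1-E_j-E_k=H_{1jk}$ has area $0$ forces the reverse inequality, giving $\omega(H-E_1-E_j)=\omega(E_j)$, i.e. $\omega(H-E_1)=2\omega(E_j)$. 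Finally I would split on $\omega(E_1)$ versus $\omega(E_j)$: if $\omega(E_1)=\omega(E_j)$ then all the $E_i$ have equal area and $\omega(H-E_1)=2\omega(E_1)$, which combined with $K_\omega=-3H+\sum E_i$ gives $K_\omega=\lambda[\omega]$, so $(X,\omega)$ is monotone — this is alternative (i). If $\omega(E_1)>\omega(E_j)$ we are in alternative (ii) and the area identities above are exactly what is claimed; the sub-case $\omega(E_1)<\omega(E_j)$ has to be ruled out, which follows because the reduced basis is built so that $\omega(E_1)\ge\omega(E_2)=\cdots=\omega(E_N)$ by the inductive minimality condition (any $E\in\E_\omega$ has area $\ge\omega(E_N)$, and $E_1$ is chosen of largest admissible area among such reductions).

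The last clause — that a minimal-area $E\in\E_\omega$ is either some $E_j$ or some $H_{1j}$ — then follows from the classification of minimal-area classes obtained above, once one observes that any minimal-area class reduces through $H_{ijk}$'s of zero area to an $E_l$ with $\omega(E_l)=\omega(E_N)$, and tracks which classes of the form $aH-\sum b_s E_s$ with $a\le 1$ can have minimal area: $a=0$ gives $E_l$, and $a=1$ gives $H-E_i-E_j$, where the zero-area condition on the relevant $H_{ijk}$ and the hypothesis $\omega(E_1)>\omega(E_j)$ force one of the two indices to be $1$ (otherwise $\omega(H-E_i-E_j)=\omega(E_j)$ would be inconsistent with $E_1$ having strictly larger area and the reduction path passing through $E_1$).

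I expect the main obstacle to be the bookkeeping in the step that pins down $a\le 1$ and the exact index structure of a minimal-area class: one must use the bullet $b_i\le a<b_i+b_j+b_k$ from item 2, the monotonicity and finite-termination of the reduction, and the $G$-minimality-produced companion class $E'$ with $E\cdot E'\ge 1$ simultaneously, and argue that no class with $a\ge 2$ can be of minimal area (because reducing it strictly decreases area unless all intermediate $H_{ijk}$ have area zero, in which case a short case analysis of the zero-area $(-2)$-classes together with minimality produces a contradiction, as in Lemma \ref{l:at least 3}). Once that rigidity is in hand, the area computations and the monotone/non-monotone dichotomy are straightforward.
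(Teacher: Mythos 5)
Your overall strategy matches the paper's: use $G$-minimality together with Lemma \ref{l:Pinsonnault} to produce a second minimal-area exceptional class $E'=g\cdot E$ meeting $E$, and run the reduction procedure of Section \ref{sub:reduction_of_exceptional_classes} to pin down what a minimal-area class can be. But the step you yourself flag as the main obstacle --- ruling out minimal-area classes with $a\ge 2$, i.e.\ showing that the reduction of a minimal-area class not equal to any $E_s$ terminates in a single step at some $H-E_1-E_j$ --- is where the sketch has a genuine gap. You propose to derive a contradiction from ``a short case analysis of the zero-area $(-2)$-classes together with minimality, as in Lemma \ref{l:at least 3}'', but $G$-minimality and the characteristic-element trick of Lemma \ref{l:at least 3} are not the operative mechanism here. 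In the paper the contradiction comes from \emph{non-monotonicity}: one analyzes the last two steps of the reduction by explicit coefficient computations, shows the penultimate class must be $H-E_i-E_j$ and that $i=1$ (using $\omega(H_{1ij})\ge 0$ together with the reduced-basis ordering $\omega(E_1)\ge\cdots\ge\omega(E_N)$), and then shows that any earlier reduction step would force, via the vanishing $\omega(H_{vrt})=0$ from \eqref{e:Eequal} chained with $\omega(H_{1vr})\ge 0$, the equalities $\omega(E_1)=\cdots=\omega(E_N)$ and $\omega(H)=3\omega(E_1)$, i.e.\ monotonicity, contradicting the standing assumption. Your ``re-label so that one of the indices is $1$'' is likewise not available: the reduced basis is pinned down by the area-minimality conditions, so $i=1$ must be \emph{proved} by the area argument just described, not arranged by relabeling.

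Relatedly, your logical organization --- classify the minimal-area classes first and only at the end split on whether $\omega(E_1)=\omega(E_j)$ --- cannot work as stated. In the monotone case every class in $\E_\omega$ has the same area (they all pair to $-1$ with $K_\omega$), so for $N\ge 5$ there are minimal-area classes with $a\ge 2$, and even for $N=3,4$ classes such as $H-E_2-E_3$ violate the asserted form; the classification ``minimal area $\Rightarrow$ $E_j$ or $H-E_1-E_j$'' is simply false without assuming non-monotonicity from the outset, exactly as in the paper's Claim. With that repair and the coefficient computations supplied, the remainder of your outline (producing $E'=g\cdot E$ with $E\cdot E'\ge 1$, the identities $\omega(E_2)=\cdots=\omega(E_N)$ and $\omega(H-E_1)=2\omega(E_j)$, and the final dichotomy) does go through along the same lines as the paper.
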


\begin{proof}
Assume $(X,\omega)$ is not monotone. We shall first prove a slightly weaker statement that is independent of the $G$-action.
\vskip 5mm

\noindent{\bf Claim:} If $E\in\E_\omega$ has 
minimal area
in $\E_\omega$ and $E\neq E_s$ for any $s$, then $E=H-E_1-E_j$ for some $j>1$, and furthermore,
if such an $E$ exists and $E=H-E_1-E_j$ for some $j>2$, then we must have
$$
\omega(E_2)=\cdots =\omega(E_N).
$$

\begin{proof}[Proof of Claim] 
Let $E\in\E_\omega$ be such a class, i.e., $E$ has minimal area in $\E_\omega$ and 
$E\neq E_s$ for any $s$,
We reduce $E$ to $E_l$ for some $l$ by a sequence of $H_{ijk}$'s.
Since $E$ has minimal area in $\E_\omega$, it follows that $\omega(E_l)=\cdots=\omega(E_N)$.
Note that $l>1$.  Otherwise, $\w(E_i)=\w(E_j)$ and $\w(H_{ijk})=0$ for some $i,j,k$ from property (3) in the reduction process in Section \ref{sub:reduction_of_exceptional_classes}.  This violates that $(X,\omega)$ is not monotone. 

Suppose the reduction from $E$ to $E_l$ takes $n$ steps, let $E^\prime\in\E$ be the class 
obtained at the $(n-1)^{\text{th}}$ step.
Then $E^\prime=aH -\sum_s b_s E_s$, where $a>0$ and $b_s\geq 0$. 
The equation $E_l=E^\prime+ (E^\prime \cdot H_{ijk})\cdot H_{ijk}$ reads 
$$
E_l=(2a-b_i-b_j-b_k)H -\sum_{s=i,j,k}(b_s+a-b_i-b_j-b_k)E_s-\sum_{s\neq i,j,k} b_sE_s,
$$
which implies that $l$ must be one of $i,j$ or $k$, and without loss
of generality assuming that $l=k$, then 
$$
2a=b_i+b_j+b_k,\; a=b_j+b_k=b_i+b_k, \;a-b_i-b_j=-1.
$$
It follows easily that $E^\prime=H_{ij}=H-E_i-E_j$. Here we assume  $i<j$, but do not require 
any condition on $k=l>1$. Note that $E^\prime$ has minimal area in $\E_\omega$, which implies that
$\omega(H)=\omega(E_i)+\omega(E_j)+\omega(E_l)$ from \eqref{e:Eequal}. 

Next we prove that $i=1$. Suppose to the contrary that $i>1$. Then
$\omega(H_{1ij})=\omega(H)-\omega(E_1)-\omega(E_i)-\omega(E_j)\geq 0$. On the 
other hand, $\omega(H)=\omega(E_i)+\omega(E_j)+\omega(E_l)$ and $\omega(E_1)\geq
\omega(E_l)$, which implies that $\omega(E_1)=\cdots =\omega(E_N)=\frac{1}{3}\w(H)$. This is a contradiction
because we assume $(X,\omega)$ is not monotone. Hence $i=1$ and $E^\prime=H-E_1-E_j$.

We claim that $E=E^\prime=H-E_1-E_j$. Suppose this is not true. Then there must be a class 
$\tilde{E}$ which is reduced to $E^\prime$ by some $H_{vrt}$ for $v<r<t$. We assert $v>1$. To see this,
write $\tilde{E}=aH -\sum_s b_s E_s$, where $a>0$ and $b_s\geq 0$. Then similarly we have
$$
H-E_1-E_j=(2a-b_v-b_r-b_t)H -\sum_{s=v,r,t}(b_s+a-b_v-b_r-b_t)E_s-\sum_{s\neq v,r,t} b_sE_s.
$$
If $v=1$, then $2a-b_v-b_r-b_t=1$ and $a-b_r-b_t=1$, implying $a=b_v$. Now for $s=r,t$,
the coefficient of $E_s$ on the right hand side is $b_t$, $b_r$ respectively, which are 
non-negative. It follows that $b_r=b_t=0$ from the property 2 in Section \ref{sub:reduction_of_exceptional_classes}, which contradicts the fact that $a<b_v+b_r+b_t$.
Hence $v>1$. 

We now get a contradiction as follows. Note that $\omega(H_{vrt})=0$ and
$\omega(H_{1vr})\geq 0$ implies that 
$$
\omega(E_1)=\cdots=\omega(E_t).
$$
Then with $\omega(H_{vrt})=0$ again, we have $\omega(H)=3\omega(E_1)$. On the other
hand, $\omega(E_l)=\omega(E^\prime)=\omega(H)-\omega(E_1)-\omega(E_j)$, from which it follows that
$$
\omega(E_1)=\cdots=\omega(E_l)=\cdots=\omega(E_N).
$$
This contradicts the assumption that $(X,\omega)$ is not monotone. Hence $E=H-E_1-E_j$
is proved. Finally, if $j>2$, then $\omega(H_{12j})\geq 0$ and $\omega(H)-\omega(E_1)-\omega(E_j)=\omega(E_l)$ implies that $\omega(E_2)=\omega(E_l)$, hence 
$\omega(E_2)=\cdots=\omega(E_N)$. This concludes the claim.

\end{proof}

To obtain Lemma \ref{l:structureLemma}, it remains to show that if $(X,\omega)$ is not monotone, then there exists some $j>2$, such that $E=H-E_1-E_j$
attains the minimal area within $\mathcal{E}_\w$. To this end, we fix a $G$-invariant $\omega$-compatible $J$. 
Then by Lemma \ref{l:Pinsonnault}, there exists a $J$-holomorphic $(-1)$-sphere $C$ representing the class 
$E_N$. Note that for any $g\in G$, $g\cdot C\in\E_\omega$. Now by the assumption that 
$(X,\omega)$ is minimal, there must be a $g\in G$ such that $g\cdot C\neq C$ and $g\cdot C$ intersects with $C$. 
The class $g\cdot C$ has minimal area in $\E_\omega$ and $g\cdot C\neq E_s$ for any $s$. 
Let $g\cdot C=H-E_1-E_j$. Then $j=N\geq 3$ must be true because $g\cdot C$ intersects 
with $C$. This finishes the proof of the lemma.

\end{proof}

\subsection{Proof of Theorem \ref{t:X-structure}} 
\label{sub:proof_of_theorem_1_1}


\vspace{3mm}

In what follows, we still assume that $N\geq 3$. We will discuss according to the following three possibilities:

\vspace{2mm}

i) {\it $(X,\omega)$ is not monotone}: By Lemma \ref{l:structureLemma}, there is a reduced basis 
$\{H, E_1,\cdots,E_N\}$ such that 
$$
\omega(E_1)>\omega(E_2)=\cdots=\omega(E_N),
$$
and moreover, if $E\in \E_\omega$ has
minimal area, then either $E=E_j$ or $E=H_{1j}=H-E_1-E_j$  for some $j>1$.

Let $J$ be any $G$-invariant $\omega$-compatible almost complex structure. For each fixed $j>1$, $E_j$ has minimal area so
by Lemma \ref{l:Pinsonnault}, so there is an embedded $J$-holomorphic sphere $C$ representing $E_j$. Since 
$(X,\omega)$ is minimal,
there must be a $g\in G$ such that $g\cdot C\neq C$ and $g\cdot C\cap C\neq \emptyset$. It follows easily that $g\cdot C$ must
be the $J$-holomorphic $(-1)$-sphere representing the class $H-E_1-E_j$, since it also has the minimal area. Furthermore,
$g\cdot C$ and $C$ intersect transversely and positively at a single point. Standard gluing construction in $J$-holomorphic curve
theory yields a $J$-holomorphic sphere $\hat{C}$, carrying the class $C+g\cdot C=H-E_1$. 
It follows that $\hat{C}$ has self-intersection
$0$, and by the adjunction formula it must be embedded. By a standard Gromov-Witten index computation, the moduli space of $J$-spheres in class $[\hat C]$ gives
rise to a fibration (which contains singular fibers) structure on $X$, where each fiber is homologous to $\hat{C}$. Since $X$ is a rational surface,
the base of the fibration must be $\s^2$. We denote the fibration by $\pi_j: X\rightarrow \s^2$. 

Next we show that $\pi_j$ is $G$-invariant. To see this, note that for any $h\in G$, $h\cdot (C\cup g\cdot C)$ must be a pair of
$J$-holomorphic $(-1)$-spheres representing $E_k$ and $H-E_1-E_k$ for some $k$ 
(see Lemma \ref{l:structureLemma}). It follows that the class
of $\hat{C}$, which is $H-E_1$, is invariant under the $G$-action. This implies that the fibration 
$\pi_j$ is $G$-invariant. 

Finally, we note that the fibration $\pi_j$ is independent of $j$, because the fiber class, which is 
$H-E_1$, is independent of $j$.
We will denote the fibration by $\pi: X\rightarrow \s^2$. 
Note that the same argument shows also that $\pi$ contains at least $N-1$ singular fibers, 
consisting of a pair of $(-1)$-spheres whose classes are $E_j, H-E_1-E_j$ for $j=2,\cdots, N$. 
There are no other singular fibers by an Euler number count.  Note that Lemma \ref{l:ConicRank} asserts $\text{rank }H^2(X)^G=2$ in this case.

\vspace{2mm}

ii) {\it $(X,\omega)$ is monotone and $H^2(X)^G$ has rank $1$}: In this case, note that 
$K_\omega\in H^2(X)^G$ and is a primitive class, hence $H^2(X)^G$ is spanned by 
$K_\omega$. The constraint $N\leq 8$ is an easy consequence of $(X,\omega)$ being monotone.  

\vspace{2mm}

iii) {\it $(X,\omega)$ is monotone and $\text{rank } H^2(X)^G>1$}: Note that the de Rham class 
$[\omega]=\lambda K_\omega\in H^2(X;\R)^G$ for some $\lambda\in\R$. 
Since $\text{rank } H^2(X)^G>1$, we may pick a $G$-invariant closed $2$-form $\eta$ such that $[\eta]$ lies in a different direction
in $H^2(X;\R)^G$. Let $\omega^\prime:=\omega+\epsilon \eta$ for some very small $\epsilon$. Then $\omega^\prime$ is a 
$G$-invariant symplectic structure such that $(X,\omega^\prime)$ is not monotone. \\

\noindent\textbf{Claim:}  $(X,\omega^\prime)$ is minimal as a symplectic $G$-manifold for sufficiently small $\epsilon$. \\

Suppose there is a disjoint union of $\omega^\prime$-symplectic $(-1)$-spheres 
$\{C_i\}$, such that for any $g\in G$, $g\cdot C_i=C_j$ for some $j$. Note that for sufficiently
small $\epsilon$, we have $\E_\w=\E_{\w'}$ because $K_{\omega}=K_{\omega^\prime}$.
Let $e_i$ be the class of $C_i$.
It follows easily that for each $i$, $e_i\in\E_\w$. Now pick a $G$-invariant $J$ compatible with $\w$.
Since $(X,\omega)$ is monotone, each $e_i$ is represented by an embedded $J$-holomorphic $(-1)$-sphere $\hat{C}_i$.
Notice that $\{e_i\}$ has the following properties:
 $e_i\cdot e_j=0$ for $i\neq j$, and for any $g\in G$, $g\cdot e_i=e_j$ for some $j$. It follows that
$\{\hat{C}_i\}$ is a disjoint union of $\omega$-symplectic $(-1)$-spheres which is invariant under the $G$-action. This
contradicts the minimality assumption on $(X,\omega)$, hence the claim. 

We apply the argument for case i) to $(X,\omega^\prime)$. 
Consequently there is a reduced basis $\{H, E_1,\cdots,E_N\}$
(w.r.t. $\omega^\prime$), and a $G$-invariant fibration $\pi^\prime:X\rightarrow \s^2$, whose fiber class is $H-E_1$ and
the singular fibers are pairs of $(-1)$-spheres representing $E_j,H-E_1-E_j$ for $j>1$. Note that by taking $\epsilon$ small,
we have $E_i\in\E_{\w'}=\E_\w$.
Finally, observe the following
crucial property $(\sum_{g\in G} g\cdot E_j)^2=0$ for any $j>1$ because $E_k$ and $H-E_1-E_k$ must appear in pairs in the sum.

Now let $J$ be any $G$-invariant $\omega$-compatible almost complex structure.  
For each fixed $j>1$, 
since $(X,\omega)$ is monotone and $E_j\in\E_\w$, $E_j$ is represented 
by an embedded $J$-holomorphic $(-1)$-sphere $C$. Let $\Lambda:=\cup_{g\in G} \;g\cdot C$. 
Then the set $\Lambda$ is a union of finitely many distinct $J$-holomorphic
$(-1)$-spheres. Since $(\sum_{g\in G} g\cdot E_j)^2=0$ for any $j>1$, it follows easily that 
$\Lambda^2=0$.

Let $\{\Lambda_i\}$ be the set of connected components of $\Lambda$. Since $G$ acts on the connected components of $\Lambda$ transitively, 
it follows that for any $i\neq k$, $\Lambda_i^2=\Lambda_k^2$. Clearly $\Lambda^2=\sum_i \Lambda_i^2$, which implies
that $\Lambda_i^2=0$ for any $i$. Now $\Lambda_i^2=0$, together with 
the fact that $\Lambda_i$ is a union of finitely many distinct $J$-holomorphic $(-1)$-spheres and $\Lambda_i$ is connected,
implies that $\Lambda_i$ consists of two $(-1)$-spheres intersecting transversely at a single point. 
We claim that the pair of $(-1)$-spheres in each $\Lambda_i$ have classes $E_k,H-E_1-E_k$ for some $k>1$. 
This is because for each $g\in G$, $g\cdot E_j$ is either $E_k$ or $H-E_1-E_k$ for some $k>1$, and for each $k>1$,
there is a $g\in G$ such that $g\cdot E_k=H-E_1-E_k$. 
By the same argument as in case i), we obtain a $G$-invariant fibration on $X$ as desired, independent of the choice of $j$. 

The statement that $N\ge 5$ was proved in Lemma \ref{l:selfint}. The statement that $N\ne 6$ follows from Lemma 
\ref{l:nosixblowup} below. 
This completes the proof of Theorem \ref{t:X-structure}.



\begin{lemma}\label{l:nosixblowup}

Let $\pi:X\rightarrow \s^2$ be a minimal symplectic $G$-conic bundle where 
$X=\CP^2\#6\overline\CP^2$. Then for any $G$-invariant, compatible almost complex
structure $J$, there is a $G$-invariant $J$-holomorphic $(-1)$-sphere. 
\end{lemma}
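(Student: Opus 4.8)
The plan is to show that the desired sphere must lie in one specific homology class, and then to prove that this class is represented by an embedded $J$-holomorphic sphere. By Lemma~\ref{l:ConicRank} the invariant lattice $H^2(X)^G$ is spanned by $K_\omega$ and the fiber class $F=H-E_1$. Using $K_\omega^2=3$, $K_\omega\cdot F=-2$, $F^2=0$ one checks that the only $\sigma\in H^2(X)^G$ with $\sigma^2=-1$ and $K_\omega\cdot\sigma=-1$ is
$$S_0:=-K_\omega-F=2H-E_2-\cdots-E_6,\qquad S_0\cdot F=2 .$$
Since by adjunction the class of any $J$-holomorphic $(-1)$-sphere $C$ satisfies $[C]^2=-1$ and $K_\omega\cdot[C]=-1$, a $G$-invariant $(-1)$-sphere must represent $S_0$. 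Conversely, if $S_0$ is represented by an embedded $J$-holomorphic sphere, that representative is unique in its class (distinct irreducible $J$-curves meet nonnegatively, while $S_0^2<0$) and is therefore $G$-invariant, because $S_0$ is. So the whole lemma reduces to showing $S_0$ is not degenerate.

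First I would observe that $S_0\in\E_\omega$. Here $S_0$ is one of the $27$ classes on $X=\CP^2\#6\overline{\CP^2}$ of square $-1$ and $K_\omega$-degree $-1$, and every symplectic form on $X$ whose canonical class is of standard form pairs positively with each of them (part of the description of the symplectic cone of a rational surface); in particular all $27$ of these classes lie in $\E_\omega$. Concretely, minimality of the $G$-conic bundle forces $\omega(E_j)=\tfrac12\omega(F)$ for $j\ge 2$, and on setting $s=\omega(E_1)$, $2t=\omega(F)$ the $27$ areas work out to $s$, $t$, and $2s-t=\omega(S_0)$. The same bookkeeping gives $\omega(S_0)\le\min(s,t)$ precisely when $s\le t$, i.e.\ when $(X,\omega)$ is monotone or has large fiber area ($\omega(E_1)\le\omega(E_j)$); in that range $S_0$ has minimal area in $\E_\omega$, so Lemma~\ref{l:Pinsonnault} represents it by an embedded sphere and the proof is complete.

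It remains to handle the small-fiber-area case $\omega(E_1)>\omega(E_j)$, where $\omega(S_0)=2s-t$ is the \emph{largest} area in $\E_\omega$ and $S_0$ might break up; this is the main obstacle. I would start from the fact that $m_J=1$: Lemma~\ref{4-3} gives $m_J\le 1$, while the section contained in the $J$-holomorphic stable representative of $E_1$ has self-intersection $\le -1$ (as in the proof of Lemma~\ref{l:selfint}), so $m_J\ge 1$. Hence there is a $J$-holomorphic section $E$ of self-intersection $-1$. If $E$ is disjoint from every other $J$-holomorphic section of self-intersection $-1$, then $\bigcup_{g\in G}g\cdot E$ is a $G$-invariant disjoint union of $(-1)$-spheres and we are done. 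If not, I would analyze the possible reducible $J$-holomorphic representatives of $S_0$ directly — each component is either vertical (class in $\{F,\,E_j,\,F-E_j\}$) or a section or a bisection, with the horizontal part pairing with $F$ to total $2$ — and play this off against Lemma~\ref{l:selfint} applied to pairs of $(-1)$-sections produced by the group action, together with the constraint that any $G$-orbit of $(-1)$-section classes has size divisible by $4$ (a $G$-invariant sum of $k$ such classes lies in $\Z\langle K_\omega,F\rangle$, which forces $4\mid k$). Assembling these restrictions to exclude every degenerate configuration of $S_0$, or to extract from it a $G$-invariant $(-1)$-sphere, is the hard part; by contrast the reduction to $S_0$ is purely homological, and the monotone and large-fiber-area cases are immediate from Lemma~\ref{l:Pinsonnault}.
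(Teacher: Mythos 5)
Your reduction to the class $S_0=-K_\omega-F=2H-E_2-\cdots-E_6$ is exactly the paper's starting point (the paper calls it $C$), and your homological computation showing it is the unique square $-1$, $K$-degree $-1$ class in $H^2(X)^G$, together with the uniqueness-of-representative argument, is correct. Your shortcut for the monotone and large-fiber-area cases is also correct and is genuinely slicker than the paper there: when $\omega(E_1)\le\omega(E_j)$ the area bookkeeping makes $\omega(S_0)=2s-t$ minimal in $\E_\omega$, so Lemma~\ref{l:Pinsonnault} hands you an embedded representative immediately.

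The problem is that the small-fiber-area case is not a loose end — it is the entire content of the lemma, and you have only listed ingredients rather than assembled an argument. The paper resolves it (uniformly, in fact, without case-splitting on areas) by taking a stable $J$-holomorphic representative of $S_0$ and writing it as $\hat F+\hat E$ (vertical plus horizontal components), then: (a) using Lemma~\ref{4-3} to get an irreducible $E_1$-section and ruling out $E_1$-components in $\hat E$ by positivity of intersection with $H-E_1$ and an explicit check that the leftover vertical class cannot be written in terms of $E_i$, $H-E_1-E_i$, $H-E_1$; (b) concluding $\hat F\cdot E_1=0$ so $\hat F=\sum m_sE_s$ with $s\ne 1$, and then killing $\hat F$ entirely because minimality provides a $g$ sending $E_s$ to $H-E_1-E_s$ while $\hat F$ must be $G$-invariant — this is the key use of equivariance that your sketch does not isolate; (c) ruling out $\hat E=S_1+S_2$ with two sections via $S_1\cdot S_2=0$ and $\{S_1^2,S_2^2\}=\{-1,0\}$ being incompatible with the class $S_0$. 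None of steps (a)--(c) appears in your proposal in executable form; "play this off against Lemma~\ref{l:selfint}\dots is the hard part" concedes the gap. Two smaller issues: your divisibility observation ($4\mid k$ for a $G$-invariant sum of $(-1)$-section classes) is correct but you never use it to exclude anything; and your fallback conclusion of a $G$-invariant \emph{disjoint union} of $(-1)$-spheres is weaker than the stated conclusion of a single $G$-invariant $(-1)$-sphere (it suffices for the applications to minimality, but does not prove the lemma as written).
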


\begin{proof}
Let $H,E_1,\cdots,E_6$ be a basis of $H^2(X)$ with standard interaction matrix such that
$H-E_1$ is the fiber class and $E_2,\cdots, E_6$ are $(-1)$-spheres contained in singular
fibers. Note that the canonical class $K_X=-3H+E_1+\cdots +E_6$.

With this understood, note that $C=-K_X-(H-E_1)=2H-\sum_{6\ge i\ge2}E_i\in H^2(X)^G$,
and $C$ is an exceptional class. Hence for any given $G$-invariant, compatible almost complex
structure $J$, $C$ has a $J$-holomorphic representative, which admits a decomposition of irreducible components
\begin{equation}\label{e:FE}
     C=\hat F+\hat E,
\end{equation}
where $\hat F$ is the sum of components contained in the fibers of the $G$-conic bundle (the \textit{vertical class}), 
and $\hat E$ is the sum of other components (the \textit{horizontal class}).  

By Lemma \ref{4-3}, $E_1$ must have a $J$-holomorphic representative: otherwise, it has a stable 
curve representative where one of the component has to be a section since $E_1\cdot F=1$.  
Such a section has self-intersection less than $-1$, a contradiction.

To continue with our proof, we first show that $\hat E$ does not contain $E_1$-components.  Suppose the multiplicity of the $E_1$-component is $k$, since all irreducible components pairs with $H-E_1$ non-negatively, $(C-kE_1)\cdot (H-E_1)\ge0$, hence $k\le2$.  

Furthermore, if the multiplicity is $2$ (meaning either there is a doubly covered component or two components), $\hat E=2E_1$.  Otherwise, $\hat E\cdot (H-E_1)\ge3>C\cdot(H-E_1)$, violating the positivity of intersection with $H-E_1$.

For the multiplicity $2$ case, $\hat F=2H-2E_1-\sum_{6\ge i\ge2} E_i$, while all possible irreducible components are of the form $H-E_1-E_i$, $E_i$ or $H-E_1$ for $6\ge i\ge2$.  A simple check shows this cannot be consistent with \eqref{e:FE}.

For the multiplicity $1$ case, $\hat E=E_1+E'$. $E'$ has $E'\cdot(H-E_1)=1$ hence a section.  Therefore, $E'\ge-1$ from Lemma \ref{4-3}.  Since $E'\ne E_1$, its coefficient of $H$ under the reduced basis must be positive.  One may again easily check $\hat F$ cannot be represented as a combination of classes of forms $H-E_1-E_i$, $E_i$ or $H-E_1$.

Now since the equivariant $J$-holomorphic representative of $C$ does not contain $E_1$-components, it is disjoint from the $E_1$-section.  Therefore, $\hat F\cdot E_1=0$, which implies the sum of vertical components $\hat F=\sum m_sE_s$ for some $s\ne 1$.  However, there is always an element $g\in G$ which sends $E_s$ to $H-E_1-E_s$ for any $s\ne 1$.  This forces $\hat F=0$, and $C=\hat E$.

At last, again note that $\hat E$ has at most $2$ components by positivity of intersection with $H-E_1$.  We assume $\hat E$ has two components, which are both sections $S_1$ and $S_2$.  

$$C^2=(S_1+S_2)^2=-1=S_1^2+S_2^2+2S_1S_2\ge -2+2S_1\cdot S_2$$
from Lemma \ref{4-3}.  Therefore, $2S_1\cdot S_2\le 1$, which implies $S_1\cdot S_2=0$.  This implies these are disjoint sections $S_1^2=-1$ and $S_2^2=0$.  A simple check on class $C$ shows there are no such decomposition of section classes (recall that $[S_1]\ne E_1$).

Summarizing, we have showed that the $J$-representative of $C$ is indeed irreducible, 
which is a $G$-invariant $J$-holomorphic $(-1)$-sphere. Hence the lemma.

\end{proof}

To compare with known results in algebraic geometry, it seems worth to record the following easy consequence. 

\begin{theorem}\label{t:sixPoints}
Let $(X,\omega)$ be a minimal symplectic rational $G$-surface where 
$X=\C\P^2\# 6\overline{\C\P^2}$. Then the invariant lattice $H^2(X)^G$ must be of rank
$1$. In particular, a minimal complex rational $G$-surface $X$ which is $\C\P^2$ blown up at $6$ points
must be Del Pezzo with $\text{Pic}(X)^G=\Z$.
\end{theorem}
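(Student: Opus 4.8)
The plan is to argue by contradiction using Theorem~\ref{t:X-structure}. Suppose $(X,\omega)$ is a minimal symplectic rational $G$-surface with $X=\C\P^2\#6\overline{\C\P^2}$ and $\operatorname{rank}H^2(X)^G\neq 1$. By Theorem~\ref{t:X-structure}, the only alternative is that $\operatorname{rank}H^2(X)^G=2$ and $((X,\omega),G)$ admits a symplectic $G$-conic bundle structure $\pi:X\to\s^2$. Since $(X,\omega)$ is minimal as a symplectic $G$-manifold, the bullet points following the definition of symplectic $G$-conic bundle guarantee that $\pi$ is in fact a \emph{minimal} symplectic $G$-conic bundle. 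Thus we are exactly in the setting of Lemma~\ref{l:nosixblowup}, applied to any chosen $G$-invariant $\omega$-compatible almost complex structure $J$.

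Next I would invoke Lemma~\ref{l:nosixblowup}: it produces a $G$-invariant $J$-holomorphic $(-1)$-sphere $S\subset X$. Since $S$ is a single embedded symplectic $(-1)$-sphere which is $G$-invariant, blowing it down yields a symplectic $4$-manifold on which $G$ still acts by symplectomorphisms --- that is, $\{S\}$ is a $G$-invariant disjoint union of symplectic $(-1)$-spheres. This directly contradicts the minimality of the symplectic $G$-manifold $(X,\omega)$. Hence the case $\operatorname{rank}H^2(X)^G=2$ is impossible, and $H^2(X)^G$ must have rank $1$.

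For the final sentence about the complex case, I would apply the statement just proved together with Theorem~\ref{t:X-structure}(1): if $X$ is a minimal complex rational $G$-surface which is $\C\P^2$ blown up at $6$ points, endow it with any $G$-invariant K\"ahler form $\omega$ (which exists by averaging). By Theorem~\ref{t:minimality}(2), $((X,\omega),G)$ is then a minimal symplectic rational $G$-surface, so by the rank-$1$ conclusion and case (1) of Theorem~\ref{t:X-structure}, $(X,\omega)$ is monotone; since $[\omega]=\lambda K_\omega$ in $H^2(X;\R)$ and the K\"ahler class determines the Picard rank constraints, $\operatorname{Pic}(X)^G$ has rank $1$ and $X$ is Del Pezzo (equivalently, $-K_X$ is ample, which for a rational surface is detected by monotonicity of a K\"ahler form together with $N\le 8$).

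The only genuine input here is Lemma~\ref{l:nosixblowup}, so the present theorem is essentially a formal bookkeeping step; the main (and only) subtlety is making sure the minimality hypotheses line up correctly --- that a minimal symplectic rational $G$-surface with a $G$-conic bundle structure really does yield a \emph{minimal} $G$-conic bundle, so that Lemma~\ref{l:nosixblowup} applies, and conversely that the $G$-invariant $(-1)$-sphere it outputs genuinely violates symplectic $G$-minimality rather than merely conic-bundle minimality. Both points are immediate from the definitions and the bullet list following the definition of symplectic $G$-conic bundle, so I do not anticipate any real obstacle.
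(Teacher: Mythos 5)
Your proof is correct and follows essentially the same route as the paper: both reduce the rank-$1$ claim to Lemma \ref{l:nosixblowup}, observing that minimality of the symplectic $G$-surface forces any $G$-conic bundle arising from Theorem \ref{t:X-structure}(2) to be minimal, so that the lemma's $G$-invariant $J$-holomorphic $(-1)$-sphere contradicts equivariant minimality. The only (harmless) divergence is in the final sentence about the complex case, where the paper quotes Theorem 3.8 of \cite{DI} to pass from $\operatorname{rank}H^2(X)^G=1$ to the Del Pezzo condition, whereas you deduce it more self-containedly from monotonicity via Theorem \ref{t:X-structure}(1) together with Theorem \ref{t:minimality}(2); both are valid.
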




\begin{proof}

Under the above assumptions, Lemma \ref{l:nosixblowup} implies $(X,\w)$ is not a $G$-conic bundle, hence $rank(H^2(X)^G)=1$ by Theorem \ref{t:X-structure}.

For the second statement, notice that a conic bundle on a minimal complex rational $G$-surface defines a minimal symplectic $G$-conic
bundle with respect to any $G$-invariant K\"{a}hler form.
Theorem 3.8 of \cite{DI} asserts then $X$ must be a del Pezzo surface if $rankH^2(X)^G=1$.

\end{proof}



\section{The structure of $G$}
\subsection{Proof of Theorem \ref{t:G-structure}, $4\le N\le 8$} 
\label{sub:proof_of_theorem_ref}

We start with the following observation.

\begin{lemma}\label{l:faithful}
Suppose $(X,\omega)$ is monotone. If $N\geq 4$, then the representation of $G$ on $H^2(X)$ is faithful.
 \end{lemma}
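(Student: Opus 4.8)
The plan is to suppose $g\in G$ acts trivially on $H^2(X)$ and deduce $g=\mathrm{id}$. First I would fix a $G$-invariant, $\omega$-compatible almost complex structure $J$, so that $g$ is a $J$-holomorphic diffeomorphism. Since $(X,\omega)$ is monotone, $[\omega]=\lambda K_\omega$ with $\lambda<0$, and hence every class $E\in\E_\omega$ has the \emph{same} symplectic area $\omega(E)=\lambda(K_\omega\cdot E)=-\lambda>0$; in particular each $E\in\E_\omega$ realizes the minimal area, so Lemma \ref{l:Pinsonnault} gives an embedded $J$-holomorphic sphere $C_E$ representing it. Moreover $C_E$ is the \emph{only} $J$-holomorphic representative of $E$: two distinct irreducible $J$-holomorphic curves have non-negative homological intersection, whereas $E\cdot E=-1$. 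Since $g$ maps $J$-curves to $J$-curves and $g_*E=E$, we get $g(C_E)=C_E$ for every $E\in\E_\omega$.

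Next I would fix a reduced basis $H,E_1,\dots,E_N$ (these exist, see Section \ref{sub:reduction_of_exceptional_classes}), noting that by monotonicity $E_i\in\E_\omega$ and $H-E_i-E_j\in\E_\omega$ for all $i\neq j$, and use these exceptional spheres to pin $g$ down. For a fixed $i$ and varying $j\neq i$ one has $(H-E_i-E_j)\cdot(H-E_i-E_k)=0$ and $E_i\cdot(H-E_i-E_j)=1$; hence the spheres $C_{H-E_i-E_j}$ are pairwise disjoint and each meets $C_{E_i}$ transversally at a single point. Since $g$ preserves $C_{E_i}$ and each $C_{H-E_i-E_j}$, it fixes each of these $N-1\geq 3$ distinct points of $C_{E_i}$; as $g|_{C_{E_i}}$ is a holomorphic automorphism of $C_{E_i}\cong\C\P^1$ fixing at least three points, it is the identity. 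The same argument applied to $C_{H-E_1-E_2}$ — which meets $C_{E_1}$, $C_{E_2}$, and $C_{H-E_k-E_l}$ for a disjoint pair $\{k,l\}\subset\{3,\dots,N\}$ (here the hypothesis $N\geq 4$ is used) in three distinct points, all now fixed by $g$ — shows that $g$ also fixes $C_{H-E_1-E_2}$ pointwise.

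Finally, $C_{E_1}$ and $C_{H-E_1-E_2}$ meet transversally at a point $p$ and $g$ fixes both pointwise, so $dg_p$ restricts to the identity on $T_pC_{E_1}\oplus T_pC_{H-E_1-E_2}=T_pX$. Since $g$ has finite order, averaging a Riemannian metric makes $g$ an isometry, and an isometry of the connected manifold $X$ fixing $p$ with $dg_p=\mathrm{id}$ must be the identity near $p$, hence on all of $X$.

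I expect the real content to lie in the two facts that make the exceptional spheres rigid: monotonicity, which puts \emph{all} of $\E_\omega$ at minimal area so that Lemma \ref{l:Pinsonnault} applies uniformly, and the positivity-of-intersections uniqueness of $C_E$, which upgrades ``$g$ permutes representatives of $E$'' to ``$g$ fixes $C_E$ setwise.'' The only place $N\geq 4$ is genuinely needed is producing at least three fixed points on the relevant $\C\P^1$'s; for $N=3$ one gets only two, and $g$ may act as a nontrivial rotation — consistent with the torus-type subgroup $\Gamma$ acting trivially on $H^2(X)$ in that case. Everything else (automorphisms of $\C\P^1$, transversality of the intersections from positivity, and Bochner-type linearization of a finite-order action) is soft.
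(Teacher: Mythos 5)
Your proposal is correct and follows essentially the same route as the paper's proof: monotonicity puts all of $\E_\omega$ at minimal area so every exceptional class has a $g$-invariant embedded $J$-sphere, the configuration of a section-like sphere meeting $N-1\geq 3$ pairwise disjoint spheres forces pointwise fixing, and triviality of $dg$ at a transverse intersection of two fixed spheres finishes the argument. You merely make explicit a few points the paper leaves implicit (uniqueness of the $J$-representative via positivity of intersections, and the linearization step for a finite-order map), which is fine.
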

 
 \begin{proof}
 Fix a $G$-invariant $J$.
 Let $g\in G$ be any element acting trivially on $H^2(X)$. Then $g$ fixes every element 
 $E\in \E_\w$, which implies that
 all the $J$-holomorphic $(-1)$-spheres are invariant under $g$. Now let $C_1$ be the $J$-holomorphic 
 $(-1)$-sphere representing $E_1$, and for each $1<j\leq N$, let $C_j$ be the $J$-holomorphic $(-1)$-sphere representing
 $H-E_1-E_j$. Then it is clear that $C_1$ intersects each $C_j$, $j>1$, transversely at one point and the $C_j$'s are mutually
 disjoint. It follows that the cardinality of the set $C_1\cap (\cup_j C_j)$ is $N-1$. On the other hand, each point in
 $C_1\cap (\cup_j C_j)$ is fixed under $g$, hence the action of $g$ on $C_1$ contains at least $N-1$ fixed points. When
 $N\geq 4$, it follows that $C_1$ must be fixed by $g$. A similar argument shows that every $J$-holomorphic $(-1)$-sphere
 is fixed by $g$. Since there are $J$-holomorphic $(-1)$-spheres intersecting transversely at a point, the action of $g$ on the
 tangent space at the intersection point must be trivial, which shows that $g$ must be trivial. Hence the lemma.
 
 \end{proof}

Assume $(X,\omega)$ is a symplectic rational $G$-surface with $\text{rank} H^2(X)^G=1$.

Since $K_\omega\in H^2(X)$ is fixed under the action of $G$, there is an induced representation of
$G$ on the orthogonal complement $R_N$, which is faithful by Lemma \ref{l:faithful}. This gives rise to a monomorphism 
$\rho: G\rightarrow W_N$. On the other hand, $H^2(X)^G$ is spanned by $K_\omega$, so that $R_N^G=\{0\}$. This implies that
$$
\frac{1}{|G|}\sum_{g\in G} trace\{ \rho(g):R_N\rightarrow R_N\}=\text{rank }R_N^G=0.
$$
The proof for Case 1 is completed.

\vspace{5mm}

\subsection{Proof of Theorem \ref{t:G-structure}, $N=3$} 
\label{sub:ProofGstructure}

Our main objective in this case is to show that $G$ contains an index $2$
 subgroup which is isomorphic to an imprimitive finite subgroup of $PGL(3)$.
 
 We first describe the list of subgroups of $PGL(3)$ involved. For simplicity, we will adapt the
 following convention from \cite{DI}: we denote an element  $T\in PGL(3)$ by the image of
 $[z_0,z_1,z_2]\in\C\P^2$ under $T$. 
 
 Here is the list of imprimitive finite subgroups of $PGL(3)$ up to conjugacy (see Theorem 4.7
 in \cite{DI}), where $\mu_r=\exp(2\pi i/r)$ is the $r$-th root of unity.
 
 \begin{itemize}
 \item $G_n$, generated by the following elements of $PGL(3)$:
 $$
 [\mu_n z_0,z_1,z_2],\;\; [z_0,\mu_nz_1,z_2],\;\; [z_2,z_0,z_1]
 $$
 The group $G_n$ is isomorphic to a semi-direct product of $(\Z_n)^2$ and $\Z_3$.
 \item $\tilde{G}_n$, generated by the following elements of $PGL(3)$:
 $$
  [\mu_n z_0,z_1,z_2],\;\; [z_0,\mu_n z_1,z_2],\;\; [z_0,z_2, z_1],\;\; [z_2,z_0,z_1]
 $$
The group $\tilde{G}_n$ is isomorphic to a semi-direct product of $(\Z_n)^2$ and $S_3$.
\item $G_{n,k,s}$, where $k>1$, $k|n$, and $s^2-s+1=0 \pmod{k}$. It is generated by 
the following elements of $PGL(3)$:
 $$
 [\mu_{n/k} z_0,z_1,z_2],\;\; [\mu_n^sz_0,\mu_nz_1,z_2],\;\; [z_2,z_0,z_1]
 $$
 The group $G_{n,k,s}$ is isomorphic to a semi-direct product of $\Z_n\times\Z_{n/k}$ and $\Z_3$.
 \item $\tilde{G}_{n,3,2}$, generated by the following elements of $PGL(3)$:
 $$
 [\mu_{n/3} z_0,z_1,z_2],\;\; [\mu_n^2 z_0,\mu_n z_1,z_2],\;\; [z_0,z_2,z_1],\;\; [z_1,z_0,z_2]
 $$
The group $\tilde{G}_{n,3,2}$ is isomorphic to a semi-direct product of $\Z_n\times\Z_{n/3}$ 
and $S_3$.
 \end{itemize}
 
 \vspace{2mm}

\subsubsection{Preliminaries: factorization of $G$ by exceptional spheres}
Let $\{H,E_1,E_2,E_3\}$ be a reduced basis. Then
$$
\E_\w=\{E_1,E_2,E_3, H-E_1-E_2,H-E_1-E_3,H-E_2-E_3\}.
$$ 
Since $(X,\omega)$ is monotone, the classes in $\E_\w$ have the same area, and consequently, each class in $\E_\w$ is represented 
by a $J$-holomorphic $(-1)$-sphere for any fixed $J$, which we assume to be $G$-invariant. 
Let $\Lambda$ be the union of 
these six $(-1)$-spheres. The intersection pattern of these curves can be described by a hexagon, where each edge represents 
a $(-1)$-sphere and each vertex represents an intersection point
(See Figure 1). For simplicity, for each $E\in\E_\w$ we shall use the same notation to denote the
corresponding $(-1)$-sphere.

\begin{figure}[tb]
  \centering
  \includegraphics[scale=1.2]{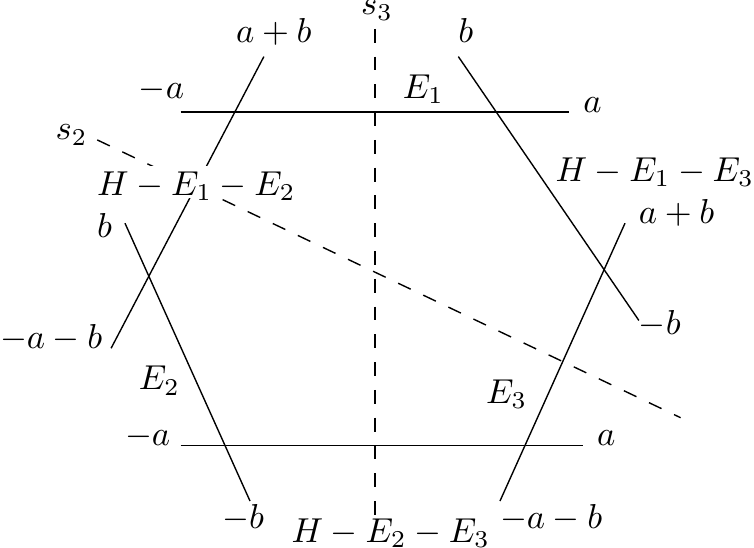}
  \caption{Exceptional configurations and action weights in $\CP^2\#3\overline{\CP^2}$}
  \label{fig:figure1}
\end{figure}

Obviously there is an induced $G$-action on $\Lambda$.

\begin{lemma}\label{l:Kalternative}
The action of $G$ on the components of $\Lambda$ is transitive, and consequently, there is a short exact sequence 
\begin{equation}\label{e:SESG}
1\rightarrow \Gamma\rightarrow G\rightarrow K\rightarrow 1,     
\end{equation}

where $\Gamma$ is isomorphic to a subgroup of $\s^1\times\s^1$, and $K$ is either $D_{12}$,
the full automorphism group of the hexagon, or the cyclic subgroup of order $6$.
\end{lemma}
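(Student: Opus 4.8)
The plan is to establish transitivity of the $G$-action on the six $(-1)$-spheres forming $\Lambda$, and then read off the structure of the quotient $K$ from the fact that $K$ must act faithfully as a symmetry group of the hexagonal configuration. First I would note that $G$ permutes the components of $\Lambda$ because $G$ preserves $\E_\omega$ (each $g\in G$ sends an exceptional class of minimal area to another, and here all six classes have equal area by monotonicity), and $J$ is $G$-invariant so $g$ carries the unique $J$-holomorphic representative of $E$ to that of $g\cdot E$. To see transitivity, I would invoke minimality of $(X,\omega)$ as a symplectic $G$-surface: if the orbit of some component were a proper $G$-invariant sub-collection, I would want to produce from it a $G$-invariant disjoint union of $(-1)$-spheres, contradicting minimality. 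The combinatorics of the hexagon make this tractable: the only $G$-invariant unions of edges that are pairwise disjoint are the ``alternating'' triples $\{E_1, E_2, E_3\}$ or $\{H-E_1-E_2, H-E_1-E_3, H-E_2-E_3\}$ (the two $3$-element independent sets in the hexagon graph), each of which is a disjoint union of three $(-1)$-spheres. So any proper invariant subset either is such a triple — giving an immediate contradiction with minimality — or, if it is a union of intersecting components, I can pass to the complementary alternating triple or otherwise extract a genuinely disjoint invariant sub-union. Ruling out all proper invariant subsets this way forces the action to be transitive.

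With transitivity in hand, set $\Gamma\tl G$ to be the subgroup acting trivially on every component of $\Lambda$ (equivalently, fixing each of the six curves setwise), and $K = G/\Gamma$ the induced permutation group on the components. Since the six curves are arranged as the edges of a hexagon and $K$ acts on them preserving the incidence pattern (two curves meet iff the corresponding edges share a vertex), $K$ embeds into the automorphism group of the hexagon, which is the dihedral group $D_{12}$ of order $12$. By transitivity $|K|$ is divisible by $6$, so $K$ is either all of $D_{12}$ or the index-$2$ cyclic rotation subgroup $\Z_6$. Finally, for $\Gamma$: an element fixing each component $E$ setwise also fixes each of the six intersection points, hence acts on each $(-1)$-sphere (a copy of $\s^2 = \CP^1$) fixing at least two points, so acts by a rotation; since such an element lies in the stabilizer of, say, the pair $E_1, E_2$ which meet transversally at a point $p$, it acts on $T_pX$ with two distinct invariant complex lines (the tangent lines to $E_1$ and $E_2$), hence diagonally, i.e. by an element of a $2$-torus $\s^1\times\s^1$. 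Linearizing the $\Gamma$-action near $p$ gives a faithful homomorphism $\Gamma\to \s^1\times\s^1$, so $\Gamma$ is isomorphic to a finite subgroup of the $2$-torus. This yields the short exact sequence \eqref{e:SESG} with the stated description of $\Gamma$ and $K$.

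The main obstacle I expect is the transitivity argument: one must carefully enumerate the $G$-invariant sub-collections of the six curves and, for each non-transitive possibility, exhibit a $G$-invariant disjoint union of $(-1)$-spheres to contradict minimality. The hexagon combinatorics keep this finite, but the bookkeeping — especially handling orbits that consist of mutually intersecting curves, where one must argue that the complementary configuration (or a suitable sub-configuration) is itself disjoint and $G$-invariant — needs care. A secondary technical point is verifying that $\Gamma$ linearizes faithfully near an intersection point; this uses that a symplectomorphism of finite order fixing a point is determined near that point by its linearization (via an equivariant Darboux or averaging argument), together with the observation that fixing two transverse $J$-curves through $p$ pins down the linear action up to the diagonal torus.
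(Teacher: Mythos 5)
Your identification of $\Gamma$ with a subgroup of $\s^1\times\s^1$ (by linearizing at a transverse intersection point of two adjacent spheres) and of $K$ with a transitive subgroup of the hexagon's automorphism group follows the paper's proof, but your transitivity argument takes a genuinely different route. The paper does not invoke minimality at this step at all: it uses the standing hypothesis that $H^2(X)^G$ has rank $1$, so the class of the orbit $\bigcup_{g\in G}g\cdot C$ is an integral multiple of $K_\omega$; since the six classes sum to $-K_\omega$ and no proper nonempty sub-collection of them sums to a multiple of $K_\omega$ (compare coefficients of $H$), the orbit must be all of $\Lambda$. This is two lines and needs no case analysis. Your route via minimality is also available in context and can be completed, but as sketched it does not handle an orbit consisting of two \emph{adjacent} edges, e.g.\ $\{E_1,\,H-E_1-E_2\}$: its complement is not an alternating triple, and the invariant disjoint union you need is the pair of edges each adjacent to exactly one member of that orbit (here $\{E_2,\,H-E_1-E_3\}$, which is $G$-invariant and has intersection number $0$). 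You should also note that a $3$-element orbit must be an alternating triple for degree reasons in the induced subgraph. Finally, a small slip: $E_1$ and $E_2$ are disjoint ($E_1\cdot E_2=0$), so the point you linearize the $\Gamma$-action at must be a vertex of the hexagon such as $E_1\cap(H-E_1-E_2)$, which is what the paper uses.

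One further point, which applies equally to the paper's proof and to yours: the step ``$K$ is an edge-transitive subgroup of $D_{12}$, hence $K=D_{12}$ or $\Z_6$'' is not a complete enumeration. $D_{12}$ contains a third subgroup acting transitively on the six edges, namely the copy of $S_3$ generated by the order-$3$ rotation together with the three reflections through pairs of opposite \emph{vertices}; under the identification $D_{12}=\Z_2\times S_3$ used later in the paper this is the graph $\{(\mathrm{sgn}\,\sigma,\sigma)\}$ of the sign homomorphism. It is edge-transitive and acts on $R_3=A_2+A_1$ by the standard representation plus the sign representation, hence with no invariants, so it is not excluded by transitivity or by the rank-$1$ condition alone. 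Ruling this case out (or incorporating it) requires an argument that neither your proposal nor the quoted proof supplies; since your argument for this step is the same as the paper's, this is a shared issue rather than a place where you fall short of the paper, but it is the one genuinely incomplete step in both.
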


\begin{proof}
Let $C$ be any of the $(-1)$-spheres. Then the class of the union $\cup_{g\in G} g\cdot C$ must lie in
$H^2(X)^G$. Since $H^2(X)^G=\text{Span }(K_\omega)$, the class of $\cup_{g\in G} g\cdot C$ must be a
multiple of $K_\omega$. On the other hand, the class of $\Lambda$ equals $-K_\w$, from which it follows 
easily that $\Lambda=\cup_{g\in G} g\cdot C$. This proves that the action of $G$ on components 
of $\Lambda$ is transitive.

The action of $G$ on $\Lambda$ gives rise to a short exact sequence 
$$
1\rightarrow \Gamma\rightarrow G\rightarrow K\rightarrow 1,
$$
 where $\Gamma$ is the normal subgroup of $G$ consisting of elements which leave each $(-1)$-sphere
 invariant. The quotient group $K=G/\Gamma$ has an effective, transitive action on the hexagon $\Lambda$,
 which must be either the automorphism group $D_{12}$ of the hexagon, or the cyclic subgroup of order $6$.

 To see that $\Gamma$ is a subgroup of $\s^1\times\s^1$, we look at
 the action of $\Gamma$ on the tangent space of any intersection point of two adjacent $(-1)$-spheres in $\Lambda$.
 The action preserves a pair of complex lines intersecting transversely, giving a natural isomorphism of $\Gamma$ to
 a subgroup of $\s^1\times\s^1$.
 
 \end{proof}
 
In fact, we may identify $K$ geometrically as follows. There is a natural isomorphism $D_{12}=\Z_2\times S_3$, where the latter is the Weyl group $W_N$ 
 of the corresponding root lattice $R_N$, which is generated by $H-E_1-E_2-E_3$, $E_1-E_2$ and $E_2-E_3$.
 In this sense, $\Z_2=\langle s_1\rangle$ where $s_1$ is the rotation of $180$ degrees of the
 hexagon, and $S_3=\langle s_2,s_3\rangle$ where $s_2$, $s_3$ are the reflections of the 
 hexagon which switches $E_1$ and $E_2$, $E_2$ and $E_3$ respectively. Note that $s_2s_3$,
 which is $s_3$ followed by $s_2$, is a counter-clockwise rotation of $120$ degrees of the hexagon. 
 It follows that the short exact sequence $1\rightarrow \Gamma\rightarrow G\rightarrow K\rightarrow 1$
 is the same as the one obtained from the induced action of $G$ on the root lattice $R_N$, 
 with $\Gamma$ being the subgroup of $G$ acting trivially on $H^2(X)$. Under this identification, one of the following is true by Lemma \ref{l:Kalternative}
\begin{itemize}
  \item $K$ is 
 generated by $s_1$, $s_2$ and $s_3$ if it is the cyclic subgroup of order $12$.
  \item $K$ is 
 generated by $s_1$ and $s_2s_3$ if it is the cyclic subgroup of order $6$.
\end{itemize}
 
 In order to understand the structure of $G$, we begin by getting more information about
 the subgroup $\Gamma$. To this end, we fix a monomorphism 
 $\Gamma\rightarrow \s^1\times\s^1$ induced from the action of $\Gamma$ on the tangent
 space of the intersection point of $E_1$ and $H-E_1-E_3$, where the first $\s^1$-factor
 is from the action on $E_1$. We summarize the main objects in consideration as follows for readers' convenience:

\begin{itemize}
  \item $\Gamma\tl G$ is the subgroup with trivial homological action,
  \item $K=G/\Gamma$,   
  \item $\rho_i: \Gamma\rightarrow \s^1$, $i=1,2$, are the projection to the two $\s^1$-factors,
  \item $\Gamma_i=\ker \rho_i$,
  \item $\Gamma_i^\prime=\text{image }\rho_i\subset \s^1$. 
  \item $\tilde{\Gamma}_i$ is a subgroup of $\Gamma$ such that 
 $\rho_i: \tilde{\Gamma}_i\rightarrow \Gamma_i^\prime$ is isomorphic. 
\end{itemize}

\begin{lemma}\label{l:Gamma}\hfill
\begin{enumerate}
  \item $\Gamma_i$, $\Gamma_i'$ are cyclic,
  \item ord $(\Gamma_1)$=ord $(\Gamma_2)$ and ord $(\Gamma_1')$=ord $(\Gamma_2')$
  \item ord $(\Gamma_i)|$ord $(\Gamma_i')$
\end{enumerate}

\noindent We will hence denote $n:=ord(\Gamma_i')$ and $k:=$ord$(\Gamma_i')$/ord$(\Gamma_i)$
\end{lemma}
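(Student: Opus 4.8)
The plan is to exploit the fact that $\Gamma$ preserves each of the six $(-1)$-spheres of the hexagon $\Lambda$, hence fixes each of its six vertices (each being the transverse intersection of two adjacent edges). Fix the vertex $v := E_1\cap(H-E_1-E_3)$. Since $\Gamma$ fixes $v$ and acts symplectically on $T_vX$ preserving the two complex lines $T_vE_1$ and $T_v(H-E_1-E_3)$, it maps into their common stabilizer $\s^1\times\s^1$; this is precisely the monomorphism $(\rho_1,\rho_2)$ fixed above. First I would record the elementary but crucial point, already used in the proof of Lemma \ref{l:faithful}, that a finite-order symplectomorphism of the connected manifold $X$ which fixes a point and has trivial derivative there must be the identity. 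It follows that for each edge $Y$ of $\Lambda$ the subgroup $\Gamma_Y:=\{\gamma\in\Gamma:\gamma|_Y=\mathrm{id}\}$ coincides with the kernel of the linearized $\Gamma$-action on $T_{v_Y}Y$ at either vertex $v_Y$ lying on $Y$; in particular $\Gamma_1=\Gamma_{E_1}$ and $\Gamma_2=\Gamma_{H-E_1-E_3}$.

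For (1): $\Gamma_i'=\mathrm{im}\,\rho_i$ is a finite subgroup of $\s^1$, hence cyclic. For $\Gamma_1$: if $\gamma\in\Gamma_1$ then $\gamma|_{E_1}=\mathrm{id}$, so the derivative of $\gamma$ at $v$ is $\mathrm{diag}(1,\rho_2(\gamma))$ with respect to $T_vE_1\oplus T_v(H-E_1-E_3)$; by the fact recalled above, $\rho_2(\gamma)=1$ forces $\gamma=\mathrm{id}$. Thus $\rho_2|_{\Gamma_1}$ is injective, so $\Gamma_1$ embeds in $\s^1$ and is cyclic; symmetrically for $\Gamma_2$ via $\rho_1$. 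This same injection yields $\rho_2(\Gamma_1)\le\Gamma_2'$ (and $\rho_1(\Gamma_2)\le\Gamma_1'$), which will be reused in (3).

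For (2): I would use that in both alternatives of Lemma \ref{l:Kalternative} the quotient $K$ contains the order-$6$ rotation subgroup $\langle\sigma\rangle$ of the hexagon ($\sigma=s_1s_2s_3$ up to reindexing), which acts transitively on the six edges. Choose a lift $\tilde\sigma\in G$ of $\sigma$; since $\Gamma\tl G$, conjugation $c_{\tilde\sigma}$ is an automorphism of $\Gamma$, and because $\tilde\sigma$ permutes the components of $\Lambda$ exactly as $\sigma$ permutes the edges of the hexagon, one checks $c_{\tilde\sigma}(\Gamma_Y)=\Gamma_{\sigma(Y)}$ for every edge $Y$. Hence $\mathrm{ord}(\Gamma_Y)$ is independent of $Y$; in particular $\mathrm{ord}(\Gamma_1)=\mathrm{ord}(\Gamma_{E_1})=\mathrm{ord}(\Gamma_{H-E_1-E_3})=\mathrm{ord}(\Gamma_2)$. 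Since $\rho_i:\Gamma\to\Gamma_i'$ is surjective with kernel $\Gamma_i$, we get $\mathrm{ord}(\Gamma_i')=|\Gamma|/\mathrm{ord}(\Gamma_i)$, and therefore $\mathrm{ord}(\Gamma_1')=\mathrm{ord}(\Gamma_2')$ as well.

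For (3): combine (1) and (2). From the injection $\rho_2|_{\Gamma_1}:\Gamma_1\hookrightarrow\Gamma_2'$ we get $\mathrm{ord}(\Gamma_1)\mid\mathrm{ord}(\Gamma_2')$, and by (2) the latter equals $\mathrm{ord}(\Gamma_1')$; symmetrically $\mathrm{ord}(\Gamma_2)\mid\mathrm{ord}(\Gamma_1')=\mathrm{ord}(\Gamma_2')$. The main obstacle is step (2): there is no a priori symmetry between the two $\s^1$-factors at $v$, so one genuinely has to invoke the hexagonal symmetry of $\Lambda$ together with the normality of $\Gamma$ in $G$ to move between $E_1$ and $H-E_1-E_3$ — everything else is a short linear-algebra computation at the fixed point $v$.
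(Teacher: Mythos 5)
Your proposal is correct and follows essentially the same route as the paper: cyclicity of $\Gamma_i$ via injectivity of $\rho_2|_{\Gamma_1}$ and $\rho_1|_{\Gamma_2}$ (which also gives the divisibility in (3)), and equality of orders by conjugating with a lift of the $60$-degree rotation, which carries the pointwise stabilizer of $H-E_1-E_3$ to that of $E_1$. Your explicit identification of $\Gamma_i$ with the pointwise stabilizers of the corresponding edges, justified by the ``trivial derivative at a fixed point implies identity'' fact, only makes precise what the paper uses implicitly (as in Lemma \ref{l:faithful}).
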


\begin{proof}
It is clear that $\Gamma_i^\prime$ is cyclic. $\Gamma_i$ is also cyclic because both 
$\rho_2|_{\Gamma_1}$ and $\rho_1|_{\Gamma_2}$ are injective. 
 Note that this also shows that the order
of $\Gamma_1^\prime$ (resp. $\Gamma_2^\prime$) is divisible by the order of 
$\Gamma_2$ (resp. $\Gamma_1$). 

Finally, $\Gamma_1$ and $\Gamma_2$ have the same order. This is because if we
let $g\in G$ be an element whose action on the hexagon is a counter-clockwise 
rotation of $60$ degrees, then $g\Gamma_2g^{-1}=\Gamma_1$. Consequently, the order of 
$\Gamma_i^\prime=\Gamma/\Gamma_i$ is independent of $i$.

\end{proof}

 \begin{lemma}\label{l:GammaTilde}
      The subgroup $\tilde{\Gamma}_i<\Gamma$ exists.  In particular, since $\Gamma$ is Abelian, 
      $\Gamma\cong\Gamma_i\times \tilde{\Gamma}_i$ for $i=1,2$.
  \end{lemma}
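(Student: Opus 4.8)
The plan is to deduce the existence of $\tilde\Gamma_i$ from the embedding $\Gamma\hookrightarrow\s^1\times\s^1$ together with the numerical information in Lemma~\ref{l:Gamma}, by first pinning down the exponent of $\Gamma$ and then producing the splitting element by an order count. Abstractly the short exact sequence $1\to\Gamma_i\to\Gamma\to\Gamma_i'\to 1$ need not split even though both $\Gamma_i$ and $\Gamma_i'$ are cyclic, so the point of the argument is to use the geometric constraint rather than pure group theory.

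First I would note that the fixed monomorphism $\Gamma\hookrightarrow\s^1\times\s^1$ is $(\rho_1,\rho_2)$, so its image lies in $\Gamma_1'\times\Gamma_2'$, where by Lemma~\ref{l:Gamma}(1)--(2) the groups $\Gamma_1'$ and $\Gamma_2'$ are cyclic of the same order $n$. Hence $\Gamma$ embeds into $\Z_n\times\Z_n$, so $\exp(\Gamma)\mid n$; on the other hand $\rho_1$ exhibits $\Gamma_1'\cong\Z_n$ as a quotient of $\Gamma$, so $n\mid\exp(\Gamma)$. Therefore $\exp(\Gamma)=n$. Next I would pick any $g\in\Gamma$ with $\rho_1(g)$ a generator of $\Gamma_1'$ (possible since $\rho_1\colon\Gamma\to\Gamma_1'$ is surjective). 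Then $n=\operatorname{ord}(\rho_1(g))$ divides $\operatorname{ord}(g)$, which in turn divides $\exp(\Gamma)=n$, so $\operatorname{ord}(g)=n$, and $\tilde\Gamma_1:=\langle g\rangle$ is cyclic of order $n$ on which $\rho_1$ restricts to a surjection onto the order-$n$ group $\Gamma_1'$, hence an isomorphism. The same argument with $\rho_2$ gives $\tilde\Gamma_2$.

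Finally, to get $\Gamma\cong\Gamma_i\times\tilde\Gamma_i$ I would argue: since $\rho_i$ restricts to an isomorphism $\tilde\Gamma_i\xrightarrow{\ \sim\ }\Gamma_i'=\Gamma/\Gamma_i$, we have $\Gamma_i\cap\tilde\Gamma_i=\ker\bigl(\rho_i|_{\tilde\Gamma_i}\bigr)=\{1\}$ and $|\Gamma_i|\cdot|\tilde\Gamma_i|=|\Gamma_i|\cdot[\Gamma:\Gamma_i]=|\Gamma|$, so $\Gamma=\Gamma_i\cdot\tilde\Gamma_i$; as $\Gamma$ is abelian this is an internal direct product. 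I do not anticipate a genuine obstacle here — the only thing to be careful about is precisely that one must not try to split $1\to\Gamma_i\to\Gamma\to\Gamma_i'\to 1$ abstractly, and instead invoke $\exp(\Gamma)=n$, which is exactly what the embedding $\Gamma\hookrightarrow\s^1\times\s^1$ combined with $\operatorname{ord}(\Gamma_1')=\operatorname{ord}(\Gamma_2')=n$ provides; the rest is routine.
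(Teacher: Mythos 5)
Your proof is correct and is essentially the paper's argument in a slightly repackaged form: the paper picks $h$ with $\rho_1(h)$ generating $\Gamma_1'$ and shows $h^n=1$ by noting $h^n\in\Gamma_1=\ker\rho_1$ while $\rho_2(h^n)=\rho_2(h)^n=1$ since $|\Gamma_2'|=n$, then invokes injectivity of $\rho_2|_{\Gamma_1}$ — which is exactly your observation that $(\rho_1,\rho_2)$ embeds $\Gamma$ into $\Gamma_1'\times\Gamma_2'$ and hence bounds the exponent by $n$. No gaps; the splitting step at the end is routine as you say.
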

    
 \begin{proof}
 Let $h\in \Gamma$ be an element 
 such that $\rho_1(h)$ is a generator of $\Gamma_1^\prime$. Since $\rho_1(h)^n=1$, 
 we have $h^n\in \Gamma_1$. We claim $h^n=1$. This is because $\rho_2|_{\Gamma_1}$ 
 is injective, so that if $h^n\neq 1$ in $\Gamma_1$, then $\rho_2(h)^n=\rho_2(h^n)\neq 1$ in 
 $\Gamma_2^\prime$. But this contradicts the fact that the order of $\Gamma_2^\prime$ 
 equals $n$. Hence $h^n=1$. With this understood, we simply take 
 $\tilde{\Gamma}_1$ to be the subgroup generated by $h$.
 
 \end{proof}
 
\subsubsection{Rotation numbers}\label{3-2-2} Next we recall some basic facts about rotation numbers. Let $h\in \Gamma$ be any element and 
 $E\in\E$ be any $(-1)$-sphere which is invariant under $h$. Then either $h$ fixes $E$ or 
 $h$ acts on $E$
 nontrivially. In the latter case, $h$ fixes two points on $E$.  The following fact is a straightforward local computation:
\vskip5mm
 \noindent \textbf{Fact:} \textit{if we let $(a,b)$
  be the rotation numbers of $h$ at one of the fixed point, where $a$ is the tangential weight and
  $b$ is the weight in the normal direction, then the rotation numbers at the other fixed point are
  $(-a, b+a)$, with the second number being the weight in the normal direction.} 

\vskip 5mm

 Explicitly, the tangential weight being $a$ means that the action of $h$ is given by multiplication of 
 $\exp(2a\pi\sqrt{-1}/ord(h))$ and similarly for the normal weight.  Here we orient 
 the $(-1)$-sphere by the almost complex structure $J$ and orient the normal direction accordingly,
 so there is no sign ambiguity on $a$, $b$. Note that even when $E$ is fixed by $h$, this 
 continues to make sense, with the understanding that $a=0$ in this case.

 With the preceding understood, we order the exceptional curves and their intersections according 
 to the counter-clockwise orientation of the hexagon in Figure \ref{fig:figure1}, and ask $E_1$ to be the first exceptional curve.

 For example,  we say $H-E_1-E_3$ is \textbf{before} $E_1$ and $H-E_1-E_2$ is \textbf{after} $E_1$, 
 and the intersection point of $H-E_1-E_3$ and $E_1$ is the \textbf{first} fixed point on $E_1$, etc.

 Now for any $h\in \Gamma$, we denote by $(a,b)$ the rotation
 numbers at the intersection point of $H-E_1-E_3$ and $E_1$, with $a$ being the weight in the
 direction tangent to $E_1$. According to the orientation of the hexagon, this is the first fixed
 point of $h$ on $E_1$. With this understood, the rotation numbers at the  fixed points of $h$
 on $(-1)$-spheres are given below according to the orientation of the hexagon,
 $$
 (a+b,-a), (b,-a-b), (-a,-b), (-a-b,a), (-b,a+b).
 $$
 Finally, we remark that $h$ is completely determined by the rotation numbers at the six vertices of the hexagon.
 
 Throughout the rest of the proof of Theorem \ref{t:G-structure}, 

\begin{center}
\textit{$g\in G$ will denote those elements which act on the hexagon by a \\
 counter-clockwise rotation of $60$ degrees, }
  
\end{center}

 and we shall investigate the action of $g$ on 
 $\Gamma=\Gamma_1\times \tilde{\Gamma}_1$ given by conjugation, i.e., $h\mapsto ghg^{-1}$, 
 $\forall h\in \Gamma$.

 Here is the first corollary of the analysis on rotation numbers: note that the action of $g^3$, 
 which is a rotation of
 $180$ degrees, sends every pair of rotation numbers to its negative, i.e., $(a,b)\mapsto (-a,-b)$,
 $(a+b,-a)\mapsto (-a-b,a)$ and $(b,-a-b)\mapsto (-b,a+b)$. This implies that 
 $$
 g^3hg^{-3}=h^{-1}, \;\;\forall h\in \Gamma.
 $$
 Since $g^3$ is sent to $s_1\in K$ under the homomorphism $G\rightarrow K$ in \eqref{e:SESG}, 
 we obtain the following lemma:

\begin{lemma}\label{l:basic}
 (1) For any element of $G$ which is sent to $s_1\in K$ under $G\rightarrow K$, its
 action on $\Gamma$ by conjugation sends $h\in \Gamma$ to $h^{-1}$. (2) $g^6=1$.
 \end{lemma}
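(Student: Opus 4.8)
The plan is to prove the two parts separately; both are short deductions from what has already been set up.

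For (1): since $\ker(G\to K)=\Gamma$ and $g^3\mapsto s_1$, the preimage of $s_1$ under $G\to K$ is exactly the coset $g^3\Gamma$. So I would take an arbitrary $\sigma=g^3\gamma$ with $\gamma\in\Gamma$ and an arbitrary $h\in\Gamma$; since $\Gamma$ is abelian, $\gamma h\gamma^{-1}=h$, hence
\[
\sigma h\sigma^{-1}=g^3(\gamma h\gamma^{-1})g^{-3}=g^3hg^{-3}=h^{-1},
\]
the last equality being the identity just derived from the $180^{\circ}$ behaviour of the rotation numbers. That is (1), with nothing further to do.

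For (2): I would first observe $g^6\in\Gamma$. Indeed $K$ acts faithfully on the hexagon $\Lambda$ and $\bar g\in K$ is the $60^{\circ}$ rotation, which has order $6$; hence $\bar g^6=1$, i.e.\ $g^6\in\ker(G\to K)=\Gamma$ (and $g^k\notin\Gamma$ for $1\le k\le 5$). Next I would use the fixed monomorphism $\Gamma\hookrightarrow\s^1\times\s^1$ coming from the action of $\Gamma$ on $T_pX$, where $p$ is the intersection point of $E_1$ and $H-E_1-E_3$. Conjugation by $g$ preserves $\Gamma$, and under this embedding it is computed by transporting rotation numbers from $p$ to the adjacent vertex of the hexagon; by the displayed list of rotation numbers at the six vertices this transport is given by an integral matrix $M\in GL_2(\Z)$ (explicitly $M=\left(\begin{smallmatrix}0&-1\\1&1\end{smallmatrix}\right)$ or its inverse, according to the orientation convention). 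Since conjugation by $g^3$ is inversion on $\Gamma$, we get $M^3=-\mathrm{Id}$; as $M\neq-\mathrm{Id}$, the characteristic polynomial of $M$ is $t^2-t+1$, so $\det(M-\mathrm{Id})=1-1+1=1$. Thus $M-\mathrm{Id}$ is invertible over $\Z$, and the endomorphism of $\s^1\times\s^1$ induced by $M$ fixes only $0$. Since $g^6$ commutes with $g$, its image in $\s^1\times\s^1$ is fixed by $M$, hence equals $0$, and by injectivity of the monomorphism $g^6=1$.

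The only step that requires genuine care is the determination of $M$: the whole argument collapses if $g$ acts on $\Gamma$ by inversion (i.e.\ $M=-\mathrm{Id}$), since $-\mathrm{Id}$ fixes every $2$-torsion point of $\s^1\times\s^1$. Excluding this is precisely what the $60^{\circ}$ rotation-number bookkeeping recorded just before the lemma is for. (Alternatively one can first note that $g^3(g^6)g^{-3}=(g^6)^{-1}$ forces $(g^6)^2=1$, so that only the reduction $\bar M\in GL_2(\Z/2)$ matters; one then only needs $\mathrm{tr}\,M$ odd, i.e.\ that $\bar M$ is one of the two order-$3$ elements of $GL_2(\Z/2)$, which act without nonzero fixed vectors — and this again follows from $M\neq-\mathrm{Id}$ together with $M^3=-\mathrm{Id}$.)
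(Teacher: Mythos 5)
Your proposal is correct. Part (1) is the paper's argument: the pre-lemma rotation-number computation gives $g^3hg^{-3}=h^{-1}$, and (as you make explicit, and the paper leaves implicit) commutativity of $\Gamma$ extends this to the whole coset $g^3\Gamma$. For part (2) you take a genuinely different, and cleaner, route to the same end. The paper first uses conjugation by $g^3$ to conclude that $g^6$ is either trivial or an involution, and then kills the involution case by listing the rotation numbers of an arbitrary involution $\tau\in\Gamma$ at all six vertices (the two cases $b=0$ and $b=1$) and checking by hand that $g\tau g^{-1}\neq\tau$. You instead observe that conjugation by $g$ itself acts on $\Gamma\hookrightarrow\s^1\times\s^1$ through the integral matrix $M=\left(\begin{smallmatrix}0&-1\\1&1\end{smallmatrix}\right)$ read off from the vertex-to-vertex transport of rotation numbers, note $\det(M-\mathrm{Id})=1$ so that $M$ fixes only $0$ in the torus, and conclude directly since $g^6$ is fixed by conjugation by $g$. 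Your determination of $M$ is right (one checks $M^3=-\mathrm{Id}$, consistent with the $180^\circ$ inversion), and you correctly flag the one point where care is needed: a priori $M=-\mathrm{Id}$ also satisfies $M^3=-\mathrm{Id}$ and would fix all $2$-torsion, so the explicit $60^\circ$ bookkeeping cannot be skipped. In effect the paper's case analysis of involutions is exactly the verification that $M$ mod $2$ has no nonzero fixed vector, which your parenthetical remark identifies; your linear-algebra packaging subsumes it and would generalize painlessly, at the cost of having to justify carefully that conjugation really is given by a single integral matrix under the chosen embedding (i.e., that $dg$ carries the preferred frame at one vertex to the preferred frame at the next, which holds because $J$ is $G$-invariant).
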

 
 \begin{proof}
 It remains to show that $g^6=1$.
 First, note that $g^6\in \Gamma$. Secondly, the action of $g^3$ on $g^6$ by conjugation 
 is trivial, but the action of $g^3$ on $\Gamma$ sends 
 $h$ to $h^{-1}$ as we just showed, from which we see that either $g^6=1$ or $g^6$ 
 is an involution.  We rule out the latter by showing that the action of $g$ on any 
 involution of $\Gamma$ is nontrivial, and $G$ is a semi-direct product of $\Gamma$ 
 and $K$ follows. 
 
Let $\tau\in \Gamma$ be any involution. Without loss of generality, we assume $\tau$ acts 
nontrivially on $E_1$. Then in the rotation numbers of the two fixed points, $(a,b)$ and 
$(-a,b+a)$, $a=1$ must be true because $\tau$ has order $2$, and furthermore, either $b=0$ or $b=1$. 
If $b=0$, the rotation numbers for the action of $\tau$ at the six vertices are 
 $$
 (1,0), (1,-1), (0,-1), (-1,0), (-1,1), (0,1).
 $$
 If $b=1$, the rotation numbers for the action of $\tau$ at the six vertices are 
 $$
 (1,1), (0,-1), (1,0), (-1,-1), (0,1), (-1,0).
 $$
In the former case, the rotation numbers for the action of $g\tau g^{-1}$ are
$$
 (0,1), (1,0), (1,-1), (0,-1), (-1,0), (-1,1),
 $$
which shows $g\tau g^{-1}\neq \tau$. In the latter case, the rotation numbers for the action 
of $g\tau g^{-1}$ are
$$
(-1,0), (1,1), (0,-1), (1,0), (-1,-1), (0,1), 
 $$
which also shows $g\tau g^{-1}\neq \tau$. This finishes the proof of the lemma.

\end{proof}

With these preparation, we may now continue our proof according to the alternative by 
Lemma \ref{l:Kalternative} 

 \begin{itemize}
   \item Case A: $K=\Z_6$, and
   \item Case B: $K=D_{12}$.
 \end{itemize}

 \subsubsection{Case A: $K=\Z_6$} We begin with the following observation which follows
 immediately from the fact that $g^6=1$.
 
 \begin{proposition}
$G$ is a semi-direct product of $\Gamma$ and $K$.
 \end{proposition}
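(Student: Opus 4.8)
The plan is to establish that the short exact sequence $1\to\Gamma\to G\to K\to 1$ splits in the case $K=\Z_6$, by exhibiting an explicit order-$6$ lift of a generator of $K$. Recall from Lemma \ref{l:basic}(2) that any element $g\in G$ mapping to the counter-clockwise $60$-degree rotation satisfies $g^6=1$; since $K=\Z_6$ is generated by the image of $g$, this element $g$ has order exactly $6$ in $G$ and its image generates $K$. Hence $\langle g\rangle$ is a subgroup of $G$ of order $6$ mapping isomorphically onto $K$, which is precisely a splitting of the sequence.

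The key steps, in order, would be: first, note that the element $g$ constructed in the preceding subsection (a preimage under $G\to K$ of the $60$-degree rotation of the hexagon) exists because the homomorphism $G\to K$ is surjective; second, invoke Lemma \ref{l:basic}(2) to conclude $g^6=1$; third, observe that the image of $g$ in $K\cong\Z_6$ is a generator (since a $60$-degree rotation generates the cyclic group of hexagon rotations), so $g$ has order exactly $6$ and $\langle g\rangle\cap\Gamma=\{1\}$; fourth, conclude $G=\Gamma\rtimes\langle g\rangle$ with $\langle g\rangle\cong K$, where the action of $K$ on $\Gamma$ by conjugation is the one analyzed via rotation numbers in \S\ref{3-2-2}. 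The conjugation action of $g$ on $\Gamma=\Gamma_1\times\tilde\Gamma_1$ has already been set up, so the semidirect product structure is fully determined.

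I do not expect a genuine obstacle here: essentially all the work has been done in Lemma \ref{l:basic}, whose proof already rules out $g^6$ being a nontrivial involution precisely so that this splitting holds. The only thing to be careful about is the bookkeeping that the image of $g$ genuinely generates $K$ rather than a proper subgroup — but this is immediate from the geometric description of $K$ as the cyclic group of rotations of the hexagon, in which the $60$-degree rotation is a generator. So the proof is a short deduction, and the ``main obstacle'' is really just citing Lemma \ref{l:basic} correctly and noting that $\langle g\rangle$ meets $\Gamma$ trivially by order considerations.
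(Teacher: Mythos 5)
Your proof is correct and is exactly the paper's argument: the paper states that this proposition ``follows immediately from the fact that $g^6=1$'' (Lemma \ref{l:basic}(2)), and your write-up simply spells out that immediate deduction — $g$ has order exactly $6$ since its image generates $K\cong\Z_6$, so $\langle g\rangle$ meets $\Gamma$ trivially and splits the sequence.
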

 
 Let $\langle \Gamma,g^2\rangle$ denote the index $2$ subgroup of $G$ 
generated by $\Gamma$ and $g^2$.

\begin{proposition}\label{p:3-4}
If $K=\Z_6$, then $\langle \Gamma,g^2\rangle$ is isomorphic to $G_n$ when $k=1$, and is
isomorphic to $G_{n,k,s}$ when $k>1$, where $n=|\Gamma_1^\prime|=|\tilde{\Gamma}_1|$ 
and $n/k=|\Gamma_1|$.
\end{proposition}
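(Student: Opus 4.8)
The plan is to first extract the semidirect product structure, then to compute the conjugation action of $g^2$ on $\Gamma$ via the rotation numbers, and finally to recognize the resulting group on the list of imprimitive subgroups of $PGL(3)$. Since $g^6=1$ by Lemma \ref{l:basic} and the image of $g^2$ in $K=\Z_6$ has order $3$, the subgroup $\langle\Gamma,g^2\rangle$ fits into a split extension $1\to\Gamma\to\langle\Gamma,g^2\rangle\to\Z_3\to 1$, i.e.\ $\langle\Gamma,g^2\rangle=\Gamma\rtimes\langle\overline{g^2}\rangle$ with $\overline{g^2}$ of order $3$; it has index $2$ in $G$. By Lemmas \ref{l:Gamma} and \ref{l:GammaTilde} we have $\Gamma=\Gamma_1\times\tilde\Gamma_1\cong\Z_{n/k}\times\Z_n$, where $n=|\Gamma_1'|=|\tilde\Gamma_1|$, $n/k=|\Gamma_1|$, and $k\mid n$. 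Thus the whole content of the proposition is the identification of the order-$3$ automorphism of $\Gamma$ given by conjugation by $g^2$.

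For this, recall that $g$ rotates the hexagon of exceptional spheres by $60^{\circ}$, so conjugation by $g$ cyclically shifts the six rotation-number pairs of any $h\in\Gamma$ around the hexagon; on the pair at the intersection of $H-E_1-E_3$ and $E_1$ this shift is the linear map $(a,b)\mapsto(-b,a+b)$, exactly as used in the proof of Lemma \ref{l:basic}. Hence conjugation by $g^2$ acts on that pair by $(a,b)\mapsto(-a-b,a)$, i.e.\ by the matrix $M=\begin{pmatrix}-1&-1\\1&0\end{pmatrix}\in GL_2(\Z)$, an element of order $3$. Identifying $\Gamma$ with a subgroup of $(\R/\Z)^2$ via the monomorphism $(\rho_1,\rho_2)$ --- so that the two coordinates record the weights along $E_1$ and along $H-E_1-E_3$ --- the conjugation action is the restriction of $M$. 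Now choose a generator $h_0$ of $\Gamma_1=\ker\rho_1$ (with image $(0,k/n)$) and a generator $h_1$ of $\tilde\Gamma_1$ with $\rho_1(h_1)$ generating $\Gamma_1'$ (with image $(1/n,t/n)$ for some integer $t$ determined modulo $n/k$ --- the ``twist'' between the two $\s^1$-factors).

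It remains to match $\langle\Gamma,g^2\rangle=\Gamma\rtimes_M\Z_3$ with the groups of \cite{DI}, Theorem 4.7. If $k=1$ then $\Gamma\cong(\Z_n)^2$ and the twist is immaterial (replace $h_1$ by $h_1-th_0$); the order-$3$ matrix $M$ is $GL_2(\Z)$-conjugate to the companion matrix $\begin{pmatrix}0&-1\\1&-1\end{pmatrix}$ of $x^2+x+1$, which is precisely the action of $[z_2,z_0,z_1]$ on the torus generators $[\mu_n z_0,z_1,z_2]$, $[z_0,\mu_n z_1,z_2]$, so $\langle\Gamma,g^2\rangle\cong(\Z_n)^2\rtimes\Z_3=G_n$. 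If $k>1$, one computes $g^2h_0g^{-2}$ and $g^2h_1g^{-2}$ by applying $M$ to the images $(0,k/n)$ and $(1/n,t/n)$ and re-expressing the results in the basis $\{h_0,h_1\}$; the outcome is exactly the pair of conjugation relations defining $G_{n,k,s}$ (for the torus generators $[\mu_{n/k} z_0,z_1,z_2]$, $[\mu_n^s z_0,\mu_n z_1,z_2]$ under $[z_2,z_0,z_1]$) with $s\equiv -t$, and the very solvability of these relations in $\Z_{n/k}\times\Z_n$ forces $k\mid t^2+t+1$, equivalently $s^2-s+1\equiv 0\pmod k$ --- a congruence that holds automatically since $M$ does preserve $\Gamma$. (Alternatively: since $E_1,E_2,E_3$ are disjoint $(-1)$-spheres fixed by $\Gamma$ and cyclically permuted by $g^2$, one may equivariantly blow them down, realizing $\langle\Gamma,g^2\rangle$ as an imprimitive finite subgroup of $PGL(3)$ with $\Z_3$ quotient, and then read off $n$, $k$ and the relevant group directly from \cite{DI}, Theorem 4.7.)

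The main obstacle is this final matching: computing the automorphism $M$ in a basis adapted to the splitting $\Gamma=\Gamma_1\times\tilde\Gamma_1$ carefully enough to see that it is exactly one of the automorphisms in Dolgachev--Iskovskikh's list, and in particular that the twist $t$ produces the parameter $s$ with $s^2-s+1\equiv 0\pmod k$. Everything preceding this step --- the split extension, the shape $\Z_{n/k}\times\Z_n$ of $\Gamma$, and the fact that $g^2$ acts on $\Gamma$ with order $3$ --- is already supplied by Lemma \ref{l:basic} and Lemmas \ref{l:Gamma}, \ref{l:GammaTilde}, together with the rotation-number bookkeeping from the proof of Lemma \ref{l:basic}.
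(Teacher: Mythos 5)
Your argument follows essentially the same route as the paper's: both compute the conjugation action of $g^2$ on $\Gamma=\Gamma_1\times\tilde{\Gamma}_1$ from the rotation numbers at the vertices of the hexagon (your matrix $M:(a,b)\mapsto(-a-b,a)$ is exactly the paper's shift of the six rotation-number pairs by two positions), derive $s^2-s+1\equiv 0\pmod{k}$ from the requirement that this action preserve the twisted lattice generated by $(0,k/n)$ and $(1/n,t/n)$, and then match the resulting presentation against the Dolgachev--Iskovskikh list. The one caveat is your parenthetical blow-down alternative, which would additionally require knowing that the induced finite action on the blown-down $\C\P^2$ is conjugate to a projective-linear action --- a nontrivial fact the paper does not need --- but your main line of argument does not depend on it.
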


\begin{proof}
 Let $h_1\in \Gamma_1$ be the generator whose rotation numbers at the six vertices of 
 the hexagon are
 $$
 (0,1), (1,0), (1,-1), (0,-1), (-1,0), (-1,1),
 $$
and let $\tilde{h}_1\in \tilde{\Gamma}_1$ be the generator whose rotation numbers 
at the six vertices of the hexagon are
 $$
 (1,b), (1+b,-1), (b,-1-b), (-1,-b), (-1-b,1), (-b,1+b).
 $$
This is can be shown by examining the weight on the first intersection (the one between 
$H-E_1-E_3$ and $E_1$), because on the base it should project to that of $\Gamma_1'$. Then $g^2h_1g^{-2}$ has rotation numbers
$$ 
(-1,0), (-1,1), (0,1), (1,0), (1,-1), (0,-1), 
$$
and $g^2\tilde{h}_1 g^{-2}$ has rotation numbers
$$
(-1-b,1), (-b,1+b), (1,b), (1+b,-1), (b,-1-b), (-1,-b). 
$$
Let  $g^2h_1g^{-2}=\tilde{h}_1^{kl} h_1^u$ for some $l$ and $u$ (note that $k\cdot ord(\Gamma_i)=ord(\Gamma'_i)$). 
Then comparing the rotation
numbers we have 
$$
(-1,0)= (l,bl)+(0,u),
$$
which implies that $l=-1$ and $u=b$. Similarly, let $g^2\tilde{h}_1 g^{-2}=\tilde{h}_1^{m} h_1^v$
for some $m$ and $v$, we have 
$$
(-1-b,1)=(m,mb)+(0,kv),
$$
which implies that $b^2+b+1=kv \pmod{n}$ and $m=-1-b$. Putting together, we have
$$
g^2h_1g^{-2}=\tilde{h}_1^{-k} h_1^b,\;\;\;\; g^2\tilde{h}_1 g^{-2}=\tilde{h}_1^{-b-1} h_1^v.
$$
We will need a different presentation of $\langle \Gamma,g^2\rangle$. To this end, 
we let $t_1=h_1$, $t_2=\tilde{h}_1^{-1}$, and moreover, we set $s=-b$. Then we have
$$
 g^2t_1g^{-2}=t_2^{k} t_1^{-s},\;\;\;\; g^2t_2 g^{-2}=t_2^{s-1} t_1^{-v},
 $$
 where $s^2-s+1=kv \pmod{n}$. With this presentation, one can identify the subgroup
 $\langle \Gamma,g^2\rangle$ with an imprimitive finite subgroup of $PGL(3)$.
 More precisely, when $k=1$, i.e., $\tilde{\Gamma}_1$ and $\Gamma_1$ have the same 
 order, we can actually take $\tilde{\Gamma}_1$ to be $\Gamma_2$, which corresponds to 
 $b=0$. Then $s=-b=0$, and with $k=1$, we have $v=1$. In this case. $\langle \Gamma,g^2\rangle$ 
 is isomorphic to $G_n$ by identifying $t_1= [\mu_{n} z_0,z_1,z_2]$, 
 $t_2=[z_0,\mu_n z_1,z_2]$, and $g^2=[z_2,z_0,z_1]$.
 When $k>1$, $\langle \Gamma,g^2\rangle$ is isomorphic to the group
 $G_{n,k,s}$, by identifying $t_1= [\mu_{n/k} z_0,z_1,z_2]$, 
 $t_2=[\mu_n^s z_0,\mu_nz_1,z_2]$, and $g^2=[z_2,z_0,z_1]$.
 This finishes off the proof of Proposition \ref{p:3-4}.
 
 \end{proof}
 
 \subsubsection{Case B: $K=D_{12}$} We will first show that $G$ is a semi-direct product of $\Gamma$ 
 and $K$. 
 
 \begin{lemma}\label{l:s3}
 There exists an involution in $G$ which is sent to the reflection $s_3$ under $G\rightarrow K$.
 Moreover, 
 \begin{itemize}
 \item for any $h\in\Gamma$ and any such involution $\tau$, $h\tau h^{-1}=\hat{h}\tau$, where
 $h,\hat{h}$ are related as follows: if the rotation numbers of $h$ at the first fixed point on $E_1$
 (according to the orientation of the hexagon which is counter-clockwise) are $(a,b)$, then the 
 rotation numbers of $\hat{h}$ at the first fixed point on $E_1$ are $(2a,-a)$.
 \item for any two such involutions $\tau,\tau^\prime$, $\tau^\prime=h\tau$ for some $h\in\Gamma$
 whose rotation numbers at the first fixed point on $E_1$ are $(2a,-a)$ for some $a$.
 \end{itemize}
 \end{lemma}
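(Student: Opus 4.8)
The plan is to separate the statement into (i) the existence of an order-two lift of $s_3$ and (ii) the two ``moreover'' assertions, and to feed everything through the rotation-number dictionary of \S\ref{3-2-2}. Write $v_1$ for the intersection point of $H-E_1-E_3$ and $E_1$ (the ``first fixed point on $E_1$'' in the counter-clockwise orientation of the hexagon). Recall from the combinatorics of the hexagon that $s_3$ fixes the two opposite edges $E_1$ and $H-E_2-E_3$, interchanges $H-E_1-E_2$ with $H-E_1-E_3$, and hence swaps $v_1$ with the other endpoint $v_2$ of the edge $E_1$. The single computational input I will use throughout is: \emph{for any $\sigma\in G$ mapping to $s_3$ and any $h\in\Gamma$ whose rotation numbers at $v_1$ are $(a,b)$, the conjugate $\sigma h\sigma^{-1}$ has rotation numbers $(-a,\,a+b)$ at $v_1$.} This drops out of the ``Fact'' of \S\ref{3-2-2} (which assigns $h$ the rotation numbers $(-a,a+b)$ at $v_2$, the second coordinate being the normal weight) together with the observation that $\sigma$, being $J$-holomorphic for the $G$-invariant $J$, carries $v_2$ to $v_1$ while taking the $E_1$-direction to the $E_1$-direction and the $(H-E_1-E_2)$-direction to the $(H-E_1-E_3)$-direction, so that rotation numbers transport with no extra sign.

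Granting this, the two bullets are immediate. Let $\tau$ be an involution mapping to $s_3$ and $h\in\Gamma$ with rotation numbers $(a,b)$ at $v_1$. Since $h\tau h^{-1}$ also maps to $s_3$, we may write $h\tau h^{-1}=\hat h\tau$ with $\hat h:=h\,(\tau h\tau^{-1})^{-1}\in\Gamma$; as $\tau h\tau^{-1}$ has rotation numbers $(-a,a+b)$ at $v_1$, adding rotation numbers gives $\hat h$ the rotation numbers $(a,b)+(a,-a-b)=(2a,-a)$, which is the first bullet. For the second, if $\tau$ and $\tau'$ are both involutions mapping to $s_3$, then $h:=\tau'\tau^{-1}\in\Gamma$ and expanding $1=(\tau')^2=(h\tau)^2=h\,(\tau h\tau^{-1})$ yields $\tau h\tau^{-1}=h^{-1}$; comparing rotation numbers at $v_1$ forces $(-a,a+b)=(-a,-b)$, i.e.\ $a=-2b$, so the rotation numbers of $h$ at $v_1$ are of the form $(2a,-a)$ after renaming the parameter.

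The existence of the involution is the heart of the matter, and I would argue it as a splitting statement for the extension $1\to\Gamma\to G_{s_3}\to\langle s_3\rangle\to1$, where $G_{s_3}$ is the preimage of $\langle s_3\rangle$ in $G$ under $\eqref{e:SESG}$. Pick any $\sigma\in G$ mapping to $s_3$; then $c:=\sigma^2\in\Gamma$ and $\sigma c\sigma^{-1}=c$, so by the displayed formula $c$ must have rotation numbers $(0,\gamma)$ at $v_1$, i.e.\ $c\in\Gamma_1$. An element $h\sigma$ with $h\in\Gamma$ squares to the identity precisely when $h\,(\sigma h\sigma^{-1})=c^{-1}$, and by the formula the left-hand side has rotation numbers $(0,\,a+2b)$ at $v_1$; so the condition is $a+2b\equiv-\gamma$, where $(a,b)$ must be the rotation numbers of a genuine element of $\Gamma$. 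Expressing $h=h_1^{\,j}\tilde h_1^{\,i}$ in terms of the generators of $\Gamma_1$ and $\tilde\Gamma_1$ from Lemma~\ref{l:GammaTilde} --- so that, in the units of \S\ref{3-2-2}, $h_1\leftrightarrow(0,\,k/n)$, $\tilde h_1\leftrightarrow(1/n,\,b_0/n)$ for an integer $b_0$, and $c\leftrightarrow(0,\,j_0k/n)$ for an integer $j_0$ --- the condition becomes the congruence $i(1+2b_0)+2jk\equiv-j_0k\pmod n$. Since $1+2b_0$ is odd and $k\mid n$ (Lemma~\ref{l:Gamma}), we get $\gcd(1+2b_0,\,2k,\,n)=\gcd(1+2b_0,\,k)$, which divides $k$ and hence $j_0k$; therefore the congruence is solvable, and any solution produces $h\in\Gamma$ for which $\tau:=h\sigma$ is the desired involution.

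The step I expect to be the main obstacle is precisely this last one: one must ensure that the equation for $h\sigma$ to be an involution is solvable \emph{inside} $\Gamma$ rather than merely in $\s^1\times\s^1$, which is where the explicit product decomposition $\Gamma=\Gamma_1\times\tilde\Gamma_1$ and the divisibility $k\mid n$ enter. (Equivalently, one is checking that the class of the above extension vanishes in $\widehat H^0(\Z/2,\Gamma)$, and the congruence computation is a hands-on verification of this.) Everything else --- the two bullet computations and the bookkeeping of which curves and which vertices of the hexagon are permuted by $s_3$ --- is routine once the dictionary of \S\ref{3-2-2} is in place.
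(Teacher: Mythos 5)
Your proof is correct. For the two ``moreover'' bullets you take essentially the paper's route: your single conjugation formula (a lift $\sigma$ of $s_3$ sends rotation numbers $(a,b)$ at $v_1$ to $(-a,a+b)$) is just a cleaner packaging of the paper's computation with the differentials $d\tau\colon T_xE_1\to T_yE_1$, and the resulting identities $\hat h\leftrightarrow(2a,-a)$ and $\tau'\tau\leftrightarrow(-2b,b)$ agree with the paper's. Where you genuinely diverge is the existence of the involution. The paper argues directly: for an arbitrary lift $\tau$ of $s_3$, the square $\tau^2$ fixes $E_1$ pointwise and hence lies in $\Gamma_1$, and an inspection of the rotation numbers $(a,b)$, $(-a,a+b)$ of $\tau$ at its own two fixed points on $E_1$ (together with the evenness of $\mathrm{ord}(\tau)$) shows that $\tau^2=h_1^{2b}$ is an \emph{even} power of the generator of $\Gamma_1$; since $\tau h_1\tau^{-1}=h_1$, the element $\tau h_1^{-b}$ is then the required involution. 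You instead treat it as a splitting problem for $1\to\Gamma\to G_{s_3}\to\langle s_3\rangle\to 1$ and reduce it, via the decomposition $\Gamma\cong\Gamma_1\times\tilde\Gamma_1$ of Lemma \ref{l:GammaTilde}, to the congruence $i(1+2b_0)+2jk\equiv -j_0k\pmod n$, solved using $\gcd(1+2b_0,2k,n)=\gcd(1+2b_0,k)\mid k$. Both arguments are valid, and your reduction of equations in $\Gamma$ to equations of rotation numbers at $v_1$ is legitimate because $\Gamma\to\s^1\times\s^1$ is injective. The paper's version is shorter and needs only $\Gamma_1$ and its generator; yours isolates exactly which structural facts about $\Gamma$ (the product splitting and the divisibility $k\mid n$ from Lemma \ref{l:Gamma}) force the extension to split, at the cost of missing the shortcut that the evenness of $\tau^2$ as a power of $h_1$ can be read off from $\tau$'s own fixed-point data.
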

 
 We note that analogous statements hold for the reflections $s_2$ and $s_2s_3s_2$ in
 $K$.
 
 \begin{proof}
 Suppose $\tau\in G$ is sent to $s_3$ under $G\rightarrow K$, then $\tau$ leaves the 
 $(-1)$-spheres $E_1$ and $H-E_2-E_3$ invariant. Let $h_1\in \Gamma_1$ be the generator 
 with rotation numbers $(0,1)$ at the first fixed point. By examining the rotation numbers,
 one can check easily that $\tau h_1\tau^{-1}=h_1$. On the other hand, it is easily seen that
 $\tau^2\in \Gamma$ fixes $4$ points (two fixed points from $\tau$ and two from intersections with other exceptional curves) hence the whole $(-1)$-sphere $E_1$.  Therefore, $\tau^2\in \Gamma_1$.

  The
 key observation is that $\tau^2=h_1^{2b}$ is an even power of $h_1$. To see this, note that 
 $\tau$ has two fixed points on $E_1$ and their rotation numbers are
 $(a,b)$ and $(-a,b+a)$ for some $a,b$. Furthermore, the order of $\tau$ must be even, say $2m$.
Since $\tau^2$ fixes the $(-1)$-sphere $E_1$, we must have $2a=2m$, and the rotation numbers
of $\tau^2$ at the two fixed-points are $(0, 2b)$ and $(0,2b)$. Comparing with the rotation
numbers with $h_1$, we see easily that $\tau^2=h_1^{2b}$. With $\tau h_1\tau^{-1}=h_1$, we 
see easily that $\tau h_1^{-b}$ is an involution which is sent to $s_3$ under $G\rightarrow K$.

Now we consider any involution $\tau$ which is sent to $s_3$. For any $h\in \Gamma$, let 
$x,y$ be the first and second fixed points on $E_1$, and suppose the rotation numbers of $h$
at $x$ are $(a,b)$. Then $\tau(x)=y$ and $\tau: T_x E_1
\rightarrow T_y E_1$. It is easily seen that $h\tau h^{-1}: T_x E_1\rightarrow T_y E_1$ equals
$(-a) \tau (-a)=-2a \tau$ (here $(-a)$ means multiplication by $\exp(2(-a)\pi\sqrt{-1}/ord(h))$).
Similarly, $h\tau h^{-1}= (a+b)\tau (-b)=a\tau$ in the normal direction.
Hence $h\tau h^{-1}=\hat{h}\tau$ for some $\hat{h}$ whose rotation numbers at $y$ are
$(-2a,a)$. It follows easily that the rotation numbers of $\hat{h}$ at $x$ are $(2a,-a)$.

Finally, let $\tau,\tau^\prime$ be any two involutions sent to $s_3$. We let 
\[
  f=d\tau:T_xX\to T_yX
\]
\[
  g=d\tau':T_yX\to T_xX.
\]
 Then 

\[
   g\circ f=d(\tau^\prime \tau): T_x X\rightarrow T_x X
 \]
 \[
   g^{-1}\circ f^{-1}=d(\tau \tau'): T_y X\rightarrow T_y X
 \]
 
   Note that
$\tau^\prime\tau=h\in\Gamma$. Since $g^{-1}\circ f^{-1}=(f\circ (g\circ f)\circ f^{-1})^{-1}$,
we see easily that the rotation numbers of $h$ at $y$ are the negative of the rotation numbers
at $x$. If the rotation numbers at $x$ are $(c,d)$, then the rotation numbers at $y$ are
$(-c,c+d)$ (the second number in each pair stands for the weight in the normal direction).
This gives rise to the relation $c+d=-d$, so that the rotation numbers of $h$ at $x$ are $(-2d,d)$.
Setting $d=-a$ we proved that $\tau^\prime=h\tau$ for some $h\in\Gamma$ whose rotation 
numbers at the first fixed point on $E_1$ are $(2a,-a)$ for some $a$, as we claimed.

\end{proof}

\begin{proposition}
The group $G$ is a semi-direct product of $\Gamma$ and $K$.
\end{proposition}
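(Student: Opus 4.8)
The plan is to split the exact sequence $1\to\Gamma\to G\to K\to 1$ by producing a subgroup of $G$ that maps isomorphically onto $K=D_{12}$. Since $D_{12}$ is generated by a $60^\circ$ rotation $r$ (of order $6$) together with the reflection $s_3$, subject only to the dihedral relation, I would use as candidate lifts the element $g\in G$ of \S3.2.2 projecting to $r$ — which has order $6$ by Lemma \ref{l:basic}(2) — and the involution $\tau\in G$ lifting $s_3$ furnished by Lemma \ref{l:s3}. Because $r$ and $s_3$ generate $K$, the subgroup $G':=\langle g,\tau\rangle$ surjects onto $K$; hence it suffices to show $G'$ maps isomorphically, equivalently $G'\cap\Gamma=1$, equivalently $|G'|=12$. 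Granting this, $\Gamma G'=G$ and $\Gamma\cap G'=1$ exhibit $G$ as $\Gamma\rtimes G'\cong\Gamma\rtimes K$, as desired.

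Now $\tau g\tau^{-1}$ and $g^{-1}$ both project to $s_3 r s_3^{-1}=r^{-1}$, so $\tau g\tau^{-1}=\gamma g^{-1}$ for a unique $\gamma\in\Gamma$, and the condition $G'\cap\Gamma=1$ is precisely the dihedral relation $\tau g\tau^{-1}=g^{-1}$, i.e. $\gamma=1$. The heart of the proof is therefore to arrange $\gamma=1$. I would do this by the rotation-number calculus of \S3.2.2: conjugation by $g$ cyclically shifts the six vertices of the hexagon, hence cyclically permutes the six rotation-number pairs $(a,b),(a+b,-a),\dots$ attached to any $h\in\Gamma$, while conjugation by $\tau$ is the vertex-swap described in Lemma \ref{l:s3} (the pair $(a,b)$ at the first fixed point on $E_1$ goes to $(-a,a+b)$). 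If the direct comparison does not already give $\gamma=1$, one replaces $g$ by another order-$6$ lift $g'=hg$ of $r$ — the order-$6$ condition being $\mathrm{Nm}(h):=\prod_{i=0}^{5}g^{i}hg^{-i}=1$ — chosen so that $(\tau h\tau^{-1})(g^{-1}hg)=\gamma$; one checks such an $h$ exists using the constraints $\mathrm{Nm}(\gamma)=1$ (from $(\gamma g^{-1})^{6}=\tau g^{6}\tau^{-1}=1$) and $\tau\gamma\tau^{-1}=g\gamma g^{-1}$ (from $\tau(\tau g\tau^{-1})\tau^{-1}=g$), which identify $\gamma$ with an element of the relevant image. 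After this replacement $\tau g'\tau^{-1}=g'^{-1}$ holds by construction, and $\langle g',\tau\rangle$ is a quotient of $D_{12}$ surjecting onto $K\cong D_{12}$, hence a complement to $\Gamma$.

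The main obstacle is exactly this middle step: pinning down $\gamma$ (or the correcting element $h$) by tracking the rotation numbers at all six vertices of the hexagon, while keeping the tangential-versus-normal weight convention straight under conjugation by $g$ and by $\tau$. This is the same flavour of computation already executed in the proof of Lemma \ref{l:basic}, with the extra wrinkle that $g$ does not fix the exceptional curves, so one must first translate its action at the vertices into the shift on rotation-number pairs before the bookkeeping can proceed.
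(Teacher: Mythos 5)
Your overall strategy is a legitimate variant of the paper's: you split off a complement isomorphic to $D_{12}$ using the presentation by an order-$6$ rotation and one reflection, so that only the single dihedral relation $\tau g'\tau^{-1}=g'^{-1}$ must be arranged, whereas the paper splits $K\cong\Z_2\times S_3$ by first arranging $\tau_1=g^3$ to commute with an involution $\tau_3$ over $s_3$ (via the auxiliary computation showing $a_3$ is even) and then verifying the braid relation $\tau_2\tau_3\tau_2=\tau_3\tau_2\tau_3$ by ruling out two rotation-number profiles. Your reduction is also set up correctly: writing $\tau g\tau^{-1}=\gamma g^{-1}$, the relation holds for $g'=hg$ exactly when $\gamma$ lies in the image of the endomorphism $\phi\colon h\mapsto(\tau h\tau^{-1})(g^{-1}hg)$ of the abelian group $\Gamma$, and every such $g'$ automatically has order $6$ because $g^3$ inverts $\Gamma$ (Lemma \ref{l:basic}), so the norm condition on $h$ is free.

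The gap is that the two constraints you invoke do not imply $\gamma\in\mathrm{im}\,\phi$, and this membership is the entire content of the proposition. The first constraint is vacuous: since $g^3hg^{-3}=h^{-1}$ for all $h\in\Gamma$, one has $\mathrm{Nm}(h)=1$ for \emph{every} $h\in\Gamma$, so it carries no information about $\gamma$. The second, $\tau\gamma\tau^{-1}=g\gamma g^{-1}$, is just the compatibility (cocycle) condition satisfied by any extension of this shape; it does not force the extension class to vanish --- for instance, if $\Gamma$ contained a nontrivial element fixed by conjugation by $g$ and by $\tau$, both constraints could hold with $\gamma\notin\mathrm{im}\,\phi$. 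The missing ingredient is a genuine geometric input, which is in fact available from Lemma \ref{l:basic} and its proof: $g^3$ acts on $\Gamma$ by inversion and $g$ moves every involution of $\Gamma$, hence conjugation by $g$ fixes no nontrivial element of $\Gamma$, and therefore $1-\mathrm{Ad}_g$ is invertible on the finite group $\Gamma$. Granting this and writing $\Gamma$ additively with $A=\mathrm{Ad}_{g^{-1}}$, $T=\mathrm{Ad}_\tau$, the identities $TA=A^{-1}T$ and $T\gamma=A^{-1}\gamma$ yield $(T+A)\bigl((A-A^{2})^{-1}\gamma\bigr)=\gamma$, producing the required $h$ explicitly. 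Without isolating this fixed-point-freeness (or an equivalent rotation-number computation), the step you defer as ``bookkeeping'' is not bookkeeping but the proof itself, so the argument as written does not close.
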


\begin{proof}
Recall that we fixed an element $g\in G$ which is sent to a counter-clockwise rotation of $60$ degrees under $G\rightarrow K$.
We set $\tau_1=g^3$, which is an involution by Lemma \ref{l:basic}. We pick another involution $\tau_3\in G$ which is sent to $s_3\in K$
from Lemma \ref{l:s3}. We shall first show that we can always arrange to have $\tau_1\tau_3\tau_1=\tau_3$.

By Lemma \ref{l:basic}, $\tau_1\tau_3\tau_1=h^\prime\tau_3$ for some $h^\prime\in \Gamma$ whose rotation numbers at the first fixed point on $E_1$
are $(2a_3,-a_3)$ for some $a_3$. On the other hand, if we replace $\tau_3$ by $h\tau_3h^{-1}=\hat{h}\tau_3$, then from Lemma \ref{l:basic}
$$
\tau_1(\hat{h}\tau_3)\tau_1=\hat{h}^{-1}\tau_1\tau_3\tau_1=\hat{h}^{-2}h^\prime (\hat{h}\tau_3).
$$
Hence if there exists an $\hat{h}\in \Gamma$ such that $\hat{h}^2=h^\prime$, which is equivalent to $a_3$ being even, then we can
replace $\tau_3$ by $h\tau_3h^{-1}$ for an $h\in\Gamma$ to achieve the commutativity property. 

To show that $a_3$ is even, we pick an involution $\tau_2$ which is sent to $s_2\in K$ (by an 
analog of Lemma \ref{l:basic}) and consider $\tau_1\tau_2\tau_1$. By the corresponding version of
Lemma \ref{l:basic}, we see that $\tau_1\tau_2\tau_1=\tilde{h}\tau_2$ for some $\tilde{h}\in \Gamma$ whose rotation numbers at the
second fixed point on $E_1$ are $(2a_2,-a_2)$ for some $a_2$. It follows easily that the rotation numbers of $\tilde{h}$ at the
first fixed point on $E_1$ are $(a_2,a_2)$. 

Now $\tau_2\tau_3$ is sent to a counter-clockwise rotation of $120$ degrees under 
$G\rightarrow K$, so that there exists an
$h\in \Gamma$ such that $\tau_2\tau_3=hg^2$. Now 
$$
\tau_1(\tau_2\tau_3)\tau_1=\tilde{h}\tau_2h^\prime\tau_3=\tilde{h}h^\prime k\tau_2\tau_3
$$
for some $k\in \Gamma$ whose rotation numbers can be determined as follows: since the rotation 
numbers of $h^\prime$ at the second fixed point on $E_1$ are $(a_3,-2a_3)$, by the analog of
Lemma \ref{l:basic} the rotation numbers of $k$ at the second fixed point on $E_1$ are $(-2a_3,a_3)$.
It follows that the rotation numbers of $k$ at the first fixed point are $(-a_3,-a_3)$.
With this understood, note that the rotation numbers of $\tilde{h}h^\prime k$ at the first fixed point on $E_1$ are $(a_2+a_3,a_2-2a_3)$. On the other hand,
\begin{equation}\label{e:tau23}
     \tau_1(\tau_2\tau_3)\tau_1=\tau_1(hg^2)\tau_1=h^{-1} \tau_1g^2\tau_1=h^{-1} g^2=h^{-2}\tau_2\tau_3,
\end{equation}

which implies that both $a_2+a_3$, $a_2-2a_3$ are even. It follows that $a_3$ is even, and hence there is an involution
$\tau_3$ sent to $s_3\in K$ with the property that $\tau_1\tau_3\tau_1=\tau_3$.

We set $\tau_2:=g\tau_3g^{-1}$. Then $\tau_2$ is an involution sent to $s_2\in K$ which naturally satisfies $\tau_1\tau_2\tau_1=\tau_2$.
We will show that $\tau_2,\tau_3$ satisfy the relation $\tau_2\tau_3\tau_2=\tau_3\tau_2\tau_3$. 
Note that with this relation, the subgroup generated by
$\tau_2,\tau_3$ is isomorphic to $S_3$. Together with the involution $\tau_1$, we obtain a lifting of $K=\Z_2\times S_3$ in $G$, proving that $G$ is
a semi-direct product of $\Gamma$ and $K$. 

As we have seen earlier, $\tau_2\tau_3=hg^2$ for some $h\in \Gamma$, where $h^2=1$ from \eqref{e:tau23} because both $\tau_2,\tau_3$ commute
with $\tau_1$. The rotation numbers of $h$ at the first fixed point on $E_1$ must be one of the following: (i) $(0,1)$, (ii) $(1,0)$,
(iii) $(1,1)$. We claim that in case (i), we have $\tau_2\tau_3\tau_2=\tau_3\tau_2\tau_3$. To see this,
$$
\tau_2\tau_3\tau_2=hg^2\tau_2=h\tau_3 g^2=\tau_3hg^2=\tau_3\tau_2\tau_3,
$$
where we use the fact $h\tau_3=\tau_3h$ because the rotation numbers of $h$ are $(0,1)$. 

It remains to rule out the cases (ii) and (iii). To this end, we compute
$$
g\tau_2\tau_3g^{-1}=g^2\tau_3g^{-2} \tau_2=(h\tau_2\tau_3)\tau_3(h\tau_2\tau_3)^{-1}\tau_2
=h\tau_2\tau_3\tau_2 h\tau_2=hhk\tau_2\tau_3\tau_2\tau_2=k\tau_2\tau_3,
$$
where the rotation numbers of $k\in\Gamma$ can be determined as follows. Note that 
$\tau_2\tau_3\tau_2$ is sent to the reflection $s_2s_3s_2\in K$, so the rotation numbers of $k$
at the first fixed point on $H-E_1-E_3$ can be determined by an analog of Lemma \ref{l:basic}. In case (ii), 
the rotation numbers of $h$ at the first fixed point on $H-E_1-E_3$ are $(0,1)$, so by an analog 
of Lemma \ref{l:basic}, the rotation numbers of $k$ at the first fixed point on $H-E_1-E_3$ are $(0,0)$,
i.e., $k$ is trivial in this case. In case (iii), the rotation numbers of $h$ at the first fixed point on 
$H-E_1-E_3$ are $(1,0)$, so by an analog of Lemma \ref{l:basic}, the rotation numbers of $k$ at the first fixed point on 
$H-E_1-E_3$ are $(0,1)$. It follows that the rotation numbers of $k$ at the first
fixed point on $E_1$ are $(1,0)$ in this case.

On the other hand, $g(hg^2)g^{-1}=ghg^{-1}g^2=h^\prime g^2$ for some $h^\prime \in\Gamma$, 
where the rotation numbers of $h^\prime$ at the first fixed point on $E_1$ are $(0,1)$ in case (ii) 
and $(1,0)$ in case (iii). Since in both cases, $h^\prime h\neq k$, so we reached a contradiction.
This ruled out the cases (ii) and (iii), and the proposition is proved.

\end{proof}

Finally, we show that $G$ contains an index $2$ subgroup which is isomorphic to an imprimitive
subgroup of $PGL(3)$. To this end, we fix a lifting $K^\prime$ of $K$ to $G$, and let $g\in K^\prime$
be an element of order $6$ and $\tau\in K^\prime$ be the involution sent to $s_3\in K$. We denote
by $\langle \Gamma,g^2,\tau \rangle$ the subgroup generated by the elements in $\Gamma$,
$g^2$ and $\tau$.

\begin{proposition}
Suppose $K=D_{12}$. Then $\langle \Gamma,g^2,\tau \rangle$ is isomorphic to the imprimitive
finite subgroup $\tilde{G}_n$ of $PGL(3)$ when $k=1$, and is isomorphic to $\tilde{G}_{n,3,2}$
when $k>1$, where $n=|\Gamma_1^\prime|=|\tilde{\Gamma}_1|$ and $n/k=|\Gamma_1|$.
\end{proposition}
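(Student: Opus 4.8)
The plan is to exhibit an explicit isomorphism from $\langle \Gamma, g^2, \tau\rangle$ onto $\tilde{G}_n$ (resp. $\tilde{G}_{n,3,2}$) by combining the identification of $\langle\Gamma,g^2\rangle$ already established in Proposition \ref{p:3-4} with a careful matching of the involution $\tau$. The subgroup $\langle\Gamma,g^2,\tau\rangle$ has index $2$ in $G$: indeed $\langle \Gamma,g^2\rangle$ has index $2$ inside $\langle\Gamma,g\rangle=\langle\Gamma,g^2,g^3\rangle$ (a lift of $\Z_6$), and adjoining $\tau$ (which covers $s_3$) together with $g^3=\tau_1$ (which covers $s_1$) produces a lift of the subgroup $\langle s_1,s_3\rangle\le D_{12}$, which has index $2$; one checks $\langle\Gamma,g^2,\tau\rangle$ is precisely the preimage of $\langle s_1, s_2s_3, s_3\rangle$ — a dihedral subgroup of $D_{12}$ of order $6$, wait, rather the preimage of the index-$2$ subgroup generated by the order-$6$ rotation $s_1 s_2 s_3$ and $s_3$ — so it has index $2$ in $G$.

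First I would recall from Proposition \ref{p:3-4} the generators $t_1=h_1\in\Gamma_1$ and $t_2=\tilde h_1^{-1}\in\tilde\Gamma_1$ together with the conjugation relations $g^2 t_1 g^{-2}=t_2^k t_1^{-s}$, $g^2 t_2 g^{-2}=t_2^{s-1}t_1^{-v}$ with $s^2-s+1=kv\pmod n$, and the identification of $\langle\Gamma,g^2\rangle$ with $G_n$ (case $k=1$) or $G_{n,k,s}$ (case $k>1$) via $t_1\mapsto[\mu_{n/k}z_0,z_1,z_2]$, $t_2\mapsto[\mu_n^s z_0,\mu_n z_1,z_2]$, $g^2\mapsto[z_2,z_0,z_1]$. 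Next I would use Lemma \ref{l:s3} to pin down the conjugation action of $\tau$ on $\Gamma$: since $\tau$ covers $s_3$ (which switches $E_1$ and $E_2$), by examining rotation numbers as in the proof of that lemma one gets $\tau h_1\tau^{-1}=h_1$ (established there) and, by the same rotation-number bookkeeping applied to $\tilde h_1$, a relation of the form $\tau t_2 \tau^{-1}=t_2^{\pm1}t_1^{\ast}$ with the exponents forced by $s^2-s+1\equiv 0$. In the case $k=1$ one can arrange $\tilde\Gamma_1=\Gamma_2$, $s=0$, and then $\tau$ simply swaps $t_1\leftrightarrow t_2$, matching the element $[z_0,z_2,z_1]\in PGL(3)$; combined with the relation $\tau g^2\tau = g^{-2}$ (since $\tau$ normalizes and inverts the rotation, as $s_3$ conjugates $s_2s_3$ to its inverse), this reproduces exactly the defining generators and relations of $\tilde G_n$. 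In the case $k>1$ the relation $s^2-s+1\equiv 0\pmod k$ together with $k\mid n$ forces $3\mid $ the relevant data (one checks $s$ is a primitive sixth root of unity mod $k$, so $k\mid n/3$ type constraints appear), and the only surviving possibility among the imprimitive $PGL(3)$ families with an $S_3$-part is $\tilde G_{n,3,2}$; the matching is completed by identifying $\tau$ with $[z_0,z_2,z_1]$ and noting the extra generator $[z_1,z_0,z_2]$ of $\tilde G_{n,3,2}$ corresponds to $\tau g^2$ or a similar product.

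The main obstacle I expect is the bookkeeping in the $k>1$ case: one must show that the congruence $s^2-s+1\equiv 0\pmod k$ (rather than only mod $n$), which comes from the fact that $\tau$ genuinely lifts to an involution — forcing certain rotation-number coincidences to hold exactly, not just up to the relation already known — pins $k$ down to the specific value and forces $s\equiv 2\pmod 3$ in the normalization, so that no imprimitive subgroup other than $\tilde G_{n,3,2}$ can occur. This is essentially a finite computation with rotation numbers at the six vertices of the hexagon, parallel to (but finer than) the computation in Proposition \ref{p:3-4}, using that $\tau^2=1$ forces the self-conjugation $\tau(g^2 t_i g^{-2})\tau^{-1}$ to be consistent with $g^{-2}(\tau t_i\tau^{-1})g^2$. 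Once these constraints are in place, writing down the explicit assignment of generators and verifying the defining relations of $\tilde G_n$ (resp. $\tilde G_{n,3,2}$) is routine, and since both groups have the same order $2\cdot|\Gamma|\cdot 3 = |\langle\Gamma,g^2,\tau\rangle|$, the surjective homomorphism so constructed is an isomorphism.
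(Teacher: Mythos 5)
Your overall strategy is the same as the paper's: starting from the presentation of $\langle\Gamma,g^2\rangle$ in Proposition \ref{p:3-4}, you compute the conjugation action of the involution $\tau$ on the generators $t_1=h_1$ and $t_2=\tilde h_1^{-1}$ by rotation-number bookkeeping at the vertices of the hexagon, derive an extra congruence that pins down $k$, and then match generators with the explicit imprimitive subgroups of $PGL(3)$. The paper does exactly this: writing $\tau\tilde h_1\tau=\tilde h_1^{l}h_1^{u}$ and comparing rotation numbers gives $l=-1$ and $2b+1=ku\pmod n$; combined with $b^2+b+1\equiv 0\pmod k$ from Proposition \ref{p:3-4}, this forces $k=3$ and $b\equiv 1\pmod 3$ when $k>1$, which is essentially the constraint you anticipate in your last paragraph.

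There is, however, a concrete error in your $k=1$ case. You assert that $\tau$ ``simply swaps $t_1\leftrightarrow t_2$'' and that this matches the element $[z_0,z_2,z_1]$. These two claims are inconsistent: conjugation by $[z_0,z_2,z_1]$ fixes $t_1=[\mu_n z_0,z_1,z_2]$ and sends $t_2=[z_0,\mu_n z_1,z_2]$ to $[z_0,z_1,\mu_n z_2]=t_2^{-1}t_1^{-1}$ in $PGL(3)$; the element that swaps $t_1$ and $t_2$ is $[z_1,z_0,z_2]$. (Relatedly, you describe $s_3$ as switching $E_1$ and $E_2$, but in the paper's convention $s_3$ switches $E_2$ and $E_3$, so $\tau$ leaves $E_1$ invariant --- which is precisely why $\tau h_1\tau=h_1$ in Lemma \ref{l:s3}.) The correct relations, obtained from the rotation-number computation with $b=0$, are $\tau t_1\tau=t_1$ and $\tau t_2\tau=t_2^{-1}t_1^{-1}$, and it is these that identify $\langle\Gamma,g^2,\tau\rangle$ with $\tilde G_n$. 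Once this computation is carried out correctly, the rest of your outline (the index-$2$ observation, the $k>1$ constraint forcing $k=3$ and $s=2$, and the final matching of generators) is in line with the paper's argument.
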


\begin{proof}
 Let $h_1\in \Gamma_1$ be the generator whose rotation numbers at the six vertices of 
 the hexagon are
 $$
 (0,1), (1,0), (1,-1), (0,-1), (-1,0), (-1,1),
 $$
and let $\tilde{h}_1\in \tilde{\Gamma}_1$ be the generator whose rotation numbers 
at the six vertices of the hexagon are
 $$
 (1,b), (1+b,-1), (b,-1-b), (-1,-b), (-1-b,1), (-b,1+b).
 $$
Then the rotation numbers of $\tau \tilde{h}_1\tau$ are 
$$
(-1,1+b), (b,1), (1+b,-b), (1,-1-b), (-b,-1), (-1-b,b).
$$
Writing $\tau \tilde{h}_1\tau =\tilde{h}_1^l h_1^u$ for some $l,u$, we get
$$
(-1, 1+b)=(l,lb)+(0,ku),
$$
which implies that $l=-1$ and $2b+1=ku \pmod{n}$.  

(i) Assume $\tilde{\Gamma}_1$ and $\Gamma_1$ have the same order $n$, i.e. $k=1$. 
Recall from the proof of Proposition \ref{p:3-4} that in this case, 
$b=0$, so that $u=1$ and $\tau \tilde{h}_1\tau=\tilde{h}_1^{-1} h_1$. Renaming $t_1=h_1$, $t_2=\tilde{h}_1^{-1}$
as in the proof of Proposition \ref{p:3-4}, we get
$$
 g^2t_1g^{-2}=t_2,\;\;\;\; g^2t_2 g^{-2}=t_2^{-1} t_1^{-1}, \;\;\;\; \tau t_1\tau =t_1,
 \;\;\;\; \tau t_2\tau =t_2^{-1} t_1^{-1}.
$$
With this presentation, the subgroup $\langle \Gamma,g^2,\tau \rangle$ can be identified with 
$\tilde{G}_n$ by 
$$
g^2=[z_2,z_0,z_1],\;\; \tau=[z_0,z_2,z_1],\;\; t_1=[\mu_n z_0,z_1,z_2],\;\; t_2=[z_0,\mu_n z_1,z_2].
$$
 
 (ii) Assume $\tilde{\Gamma}_1$ and $\Gamma_1$ have different orders, i.e., 
  $|\tilde{\Gamma}_1|=n$, $|\Gamma_1|=n/k$, with $k>1$. In this case, we first note that $2b+1=0\pmod{k}$. 
  On the other hand, recall from the proof of Proposition \ref{p:3-4} that $b^2+b+1=0 \pmod{k}$. It follows that  $b=1\pmod{k}$ and
$k=3$. With this understood, note that one can modify $\tilde{h}_1$ by a suitable power of $h_1$ to arrange so that $b=-2$. With this choice,
we then have $s=-b=2$ and $v=1$, where $s,v$ appear in the relations (see the proof of Proposition \ref{p:3-4})
$$
 g^2t_1g^{-2}=t_2^kt_1^{-s},\;\;\;\; g^2t_2 g^{-2}=t_2^{s-1} t_1^{-v}.
$$
  Moreover, $b=-2$ implies $u=-1$, hence the presentation of $\langle \Gamma,g^2,\tau \rangle$: 
 $$
 g^2t_1g^{-2}=t_2^{3} t_1^{-2},\;\;\;\; g^2t_2 g^{-2}=t_2 t_1^{-1},  \;\;\;\; \tau t_1\tau =t_1,
 \;\;\;\; \tau t_2\tau=t_2^{-1} t_1
$$
With this presentation, the subgroup $\langle \Gamma,g^2,\tau \rangle$ can be identified
with $\tilde{G}_{n,3,2}$ by identifying 
$$
g^2=[z_2,z_0,z_1],\;\; \tau=[z_0,z_2,z_1],\;\; t_1=[\mu_{n/3} z_0,z_1,z_2],\;\; 
t_2=[\mu_n^2 z_0,\mu_n z_1,z_2].
$$
\end{proof}

It is clear that $G$ is a semi-direct product of the imprimitive subgroup of $PGL(3)$ and $\Z_2$. 
The proof of Theorem \ref{t:X-structure} is completed.

\vspace{5mm}
 
The rest of this section is occupied by the proof of Theorem \ref{t:Q-structure}, where we assume $(X,\omega)$ 
admits a minimal symplectic $G$-conic bundle structure $\pi:X\rightarrow \s^2$. Furthermore,
we assume $N\geq 4$. 
Recall that $Q$ is the subgroup of $G$ which leaves each fiber of 
$\pi$ invariant, and $G_0$ is the subgroup of $Q$ which acts trivially on $H^2(X)$.

\vspace{2mm}

\subsection{Proof of Theorem \ref{t:Q-structure}} 
\label{subsec:proof_of_theorem_ref_}


\vspace{2mm}

We begin with some useful observations about the rotation numbers of an element of $Q$
at a fixed point. 

\vskip 3mm
\noindent\textit{Rotation numbers of fixed points.}
\vskip 3mm

Suppose $q\in X$ is fixed by a nontrivial element $g\in Q$ and $q$ is not 
the singular point of a singular fiber. Then $g$ must fix the symplectic orthogonal direction of the fiber at $q$, 
because $g$ induces a trivial action on the base. It follows that the rotation numbers of $g$ 
at $q$ are $(a,0)$ for some $a\neq 0$. 

Furthermore, if $q$ lies in a singular fiber, then $g$ must leave the $(-1)$-sphere containing $q$ invariant, 
and the rotation numbers of $g$ at the other 
fixed point, which is the singular point of the singular fiber, must be $(a,-a)$. (See Section \ref{3-2-2})

\vskip 3mm
\noindent\textit{Structure of fixed point sets.}
\vskip 3mm

Note that the fixed-point set of a nontrivial element $g\in Q$ consists of embedded $J$-holomorphic
curves and isolated points. Since each regular fiber contains two fixed points of $g$, it follows that the 
fixed-point set of $g$ consists of a bisection and isolated points which are the singular points
of those singular fibers of which $g$ leaves each component invariant. 

Clearly, one of the following must be true:

\begin{enumerate}[(i)]
  \item $g$ leaves components of a singular fiber invariant.  

  Then there is a fixed point $q$ as described above, and the singularity $p$ on the fiber has rotation number $(a,-a)$ hence is an isolated fixed point (and there is one more fixed point on either component of the fiber).
  \item $g$ switches the 2 components of the singular fiber.  

  Then

  \begin{enumerate}
    \item the singularity $p$ on the fiber is contained in the fixed bisection,
    \item $p$ is a branched point of the double branched covering from the bisection
to the base,
    \item $g$ must be an involution, because if $g^2\neq 1$, then $p$
must be an isolated fixed point of $g^2$ from (i), a contradiction.
  \end{enumerate}

\end{enumerate}


With the preceding understood, note that the subgroup $G_0$ can be identified with the subgroup
which leaves each $(-1)$-sphere in a singular fiber invariant. Since there are $N-1\geq 3$
singular fibers, the induced action of $G_0$ on the base $\s^2$ has at least $3$ fixed points.
It follows that the action of $G_0$ on the base must be trivial, and $G_0$ is a subgroup of $Q$.
It is clear that either $G_0$ is trivial, or it is finite cyclic. 

Furthermore, the fixed-point set of 
each nontrivial element of $G_0$ consists of $N-1$ isolated points with rotation numbers 
$(a,-a)$ for some $a\neq 0$ (these are the singular points of the singular fibers) and two disjoint fixed sections from the two other fixed points other than the singular points on each fiber. 
The two fixed sections must have the same self-intersection number because there exists a $g\in G$ which switches 
the two sections.
Applying Lemma \ref{l:selfint}  to the fixed sections, we see that each has self-intersection $(N-1)/2$; in particular, $N$ must be odd
if $G_0$ is nontrivial. 
Finally, we note that each element of $Q\setminus G_0$ must be an involution (the square fixes 4 points in a general fiber: intersections with the bisection, and the intersections of the two disjoint sections as above).

\begin{lemma}
The group $Q$ contains an involution; in particular, it has an even order. 
\end{lemma}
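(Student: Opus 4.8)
The plan is to produce an involution in $Q$ directly from minimality of the conic bundle, by a parity argument. First I would use minimality to fix a singular fiber $F=C_1\cup C_2$, with node $p$ and base point $b=\pi(F)$, together with an element $g\in G$ interchanging the components, $g(C_1)=C_2$. Since $g$ carries $F$ onto a fiber meeting $F$, it must preserve $F$ setwise, hence fix the node $p$ and the base point $b$. Let $d$ be the order of the automorphism of $\s^2$ that $g$ induces on the base (that is, the order of the image of $g$ in $P$); then $g^d\in Q$, and the argument splits on the parity of $d$.

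If $d$ is odd, then $g^d$ still interchanges $C_1$ and $C_2$, because the induced permutation of the pair $\{C_1,C_2\}$ is a transposition and its odd powers are that same transposition. Hence $g^d$ is a nontrivial element of $Q$ that switches the two components in a singular fiber; in particular $g^d\notin G_0$, so $g^d\in Q\setminus G_0$, and by the analysis of fixed point sets carried out just above, every such element is an involution. If $d$ is even, I would examine $dg_p$. Since $J$ is $G$-invariant, $g$ is $J$-holomorphic, so near $p$ there are $J$-holomorphic coordinates $(z_1,z_2)$ in which $\pi=z_1 z_2$ and $C_1=\{z_2=0\}$, $C_2=\{z_1=0\}$; as $g$ exchanges the two branches through $p$ and descends to a germ $\Phi$ on the base, $dg_p$ is anti-diagonal in these coordinates, with the product of its two off-diagonal entries equal to $\Phi'(0)=:\mu$, the multiplier of the induced base automorphism at $b$. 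That automorphism has order $d$, so $\mu$ is a primitive $d$-th root of unity; hence $dg_p^2=\mu\cdot\mathrm{Id}$ and $dg_p^{\,d}=\mu^{d/2}\mathrm{Id}=-\mathrm{Id}$. In particular $g^d\in Q$ is nontrivial (its differential at $p$ is $-\mathrm{Id}$, not the identity), and the order of $g^d$ is divisible by the order of $dg_p^{\,d}$, which is $2$; an appropriate power of $g^d$ is therefore an involution in $Q$. In both cases $Q$ contains an involution, whence $|Q|$ is even.

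The step that requires real care — what I would expect to be the only genuine obstacle — is the local computation in the even case: checking that a $J$-holomorphic, fibration-preserving map exchanging the two branches of the Lefschetz node has differential of the stated anti-diagonal form and that $dg_p^2$ equals the scalar $\mu=\Phi'(0)$, which is a primitive $d$-th root of unity. This is precisely what rules out the awkward possibility $g^d=1$ (i.e., that $\langle g\rangle$ meets $Q$ only in the identity). Everything else is bookkeeping with facts already established: that each singular fiber is a pair of $(-1)$-spheres, the rotation-number discussion for elements of $Q$, and the identification of elements of $Q\setminus G_0$ as involutions.
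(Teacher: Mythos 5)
Your argument is correct in outline but takes a genuinely different route from the paper's. Both proofs start the same way: minimality gives a $g\in G$ interchanging the two $(-1)$-spheres of a singular fiber $F$. The paper then passes to the unique involution $h=g^m$ in $\langle g\rangle$ (where $g$ has order $2m$) and shows $h\in Q$ by a global mod $2$ intersection argument: if $h\notin Q$ its fixed surface $\Sigma$ is confined to the two $h$-invariant fibers, so $E_N\cdot\Sigma=0$, contradicting Edmonds' formula $(h\cdot E_N)\cdot E_N\equiv E_N\cdot\Sigma \pmod{2}$ since $h\cdot E_N$ is $E_N$ or $H-E_1-E_N$. You instead pass to $g^d\in Q$, where $d$ is the order of the image of $g$ in $P$, and argue locally at the node $p$. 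Your odd-$d$ case is complete and uses only facts the paper has already established ($g^d$ still transposes the two components, so it lies in $Q\setminus G_0$ and is therefore an involution). What each approach buys: the paper's is coordinate-free and needs only the homological action plus Edmonds; yours localizes everything at the node and, when it works, gives finer information (the exact rotation number of $g^d$ at $p$).

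The one step you should not undersell is the identity $\mu=\Phi'(0)$ in the even case, and specifically its \emph{sign}. The paper's own rotation-number toolkit (the ``Fact'' in Section 3.2.2, applied to $g^2$ on an invariant component and then projected to the base) only yields $\mu^2=\zeta^2$ where $\zeta$ is the multiplier of the base rotation, i.e.\ $\mu=\pm\zeta$; and when $d\equiv 2\pmod 4$ the wrong sign would give $\mu^{d/2}=+1$, $d(g^d)_p=\mathrm{Id}$, and hence possibly $g^d=1$, killing the argument. So the even case genuinely rests on the second-order statement that the Hessian of $\pi$ at $p$ intertwines $dg_p\otimes dg_p$ with $d\Phi_b$, which is immediate in the holomorphic local model $\pi=z_1z_2$ but requires an argument for a general $G$-invariant almost complex structure $J$ (the definition of a symplectic $G$-conic bundle only guarantees a smooth Lefschetz model and $J$-holomorphic fibers, not $J$-holomorphic coordinates at the critical point). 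This is a fillable gap, not a wrong turn, but it is exactly the point the paper's Edmonds-based argument is designed to avoid.
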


\begin{proof}
Let $F$ be the singular fiber which contains the exceptional sphere representing $E_N$. The minimality assumption implies that there is a $g\in G$ which 
switches the two $(-1)$-spheres in $F$. Clearly, $g$ has an even order, say $2m$. If $g\in Q$, then as we have shown
earlier, $g$ must be an involution, and we are done. Suppose $m>1$. We let $h=g^m$, which is an involution. We claim
that $h\in Q$. 

Suppose $h$ is not contained in $Q$. Then $h$ induces a rotation on the base, so that the fixed-point set of $h$ must be 
contained in the two fibers which $h$ leaves invariant. With this understood, we claim that $h$ must fix one of the $(-1)$-spheres
in $F$. Denote $\Sigma$ as the two dimensional component of the fixed point set of $h$, which could be empty
If $h$ does not fix either of the $(-1)$-spheres in $F$, we must have $E_N\cdot \Sigma=0$. On the other hand, 
by Proposition 5.1 in \cite{Ed}, we have
$$
(h\cdot E_N)\cdot E_N= E_N\cdot \Sigma \pmod{2}.
$$
This is a contradiction because $h\cdot E_N=E_N$ or $H-E_1-E_N$, so that $(h\cdot E_N)\cdot E_N=\pm 1$. Hence $h$ fixes
one of the $(-1)$-spheres in $F$. Now since $g$ commutes with $h$ and $g$ switches the two $(-1)$-spheres in $F$, $h$ must
also fix the other $(-1)$-sphere. But this clearly contradicts the fact that $h$ is nontrivial. Hence $h\in Q$
and the lemma is proved.

\end{proof}

\begin{proposition}
If $G_0$ is nontrivial, say, of finite order $m>1$, and $Q\neq G_0$, then $Q$ is the dihedral group $D_{2m}$.
Moreover, any involution $\tau\in Q\setminus G_0$ switches the two fixed sections of $G_0$, hence the two $(-1)$-spheres in each singular fiber.
\end{proposition}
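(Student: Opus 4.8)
The plan is to read off the structure of $Q$ from the way the elements of $Q\setminus G_0$ interact with the two fixed sections of $G_0$. Write $G_0=\langle g\rangle\cong\Z_m$. I will use only the following facts established above: (a) every element of $Q\setminus G_0$ is an involution; (b) $G_0$ is \emph{exactly} the subgroup of $Q$ leaving every $(-1)$-sphere of every singular fiber invariant; (c) $\mathrm{Fix}(g)=S_1\sqcup S_2\sqcup\{p_1,\dots,p_{N-1}\}$, where $p_i$ is the node of the $i$-th singular fiber, $S_1,S_2$ are disjoint sections of $\pi$ meeting opposite components of each singular fiber away from its node, and $g$ acts near $p_i$ with rotation numbers $(a_i,-a_i)$; moreover every $p_i$ is an isolated point of $\mathrm{Fix}(g^{j})$ for all $j\not\equiv 0\pmod m$, which forces $\gcd(a_i,m)=1$.

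First I would determine how a given $\tau\in Q\setminus G_0$ acts on $S_1\cup S_2$. Since $G_0\triangleleft Q$, conjugation yields $\tau g\tau^{-1}=g^{k}$ with $\gcd(k,m)=1$, so $\mathrm{Fix}(\tau g\tau^{-1})=\mathrm{Fix}(g^{k})=\mathrm{Fix}(g)$; on the other hand $\mathrm{Fix}(\tau g\tau^{-1})=\tau(\mathrm{Fix}(g))$. Hence $\tau$ permutes the two $2$-dimensional components of $\mathrm{Fix}(g)$, so it either fixes both of $S_1,S_2$ or interchanges them. Suppose it fixes both. For a singular fiber $F=C\cup C'$, the section $S_1$ meets $F$ in a single point, lying on a definite component (say $C$) and away from the node; as $\tau$ preserves $F$ and $S_1$ it fixes this point, so $\tau(C)=C$, and likewise $\tau(C')=C'$. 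Running over all singular fibers, $\tau$ leaves every $(-1)$-sphere of every singular fiber invariant, so $\tau\in G_0$ by (b) --- a contradiction. Therefore every $\tau\in Q\setminus G_0$ interchanges $S_1$ and $S_2$, and then, tracking the point where $S_1$ meets a singular fiber as above, $\tau$ interchanges the two $(-1)$-spheres in each singular fiber.

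Next I would compute the conjugation action and bound the index $[Q:G_0]$. Fix a node $p=p_i$. Since $\tau$ interchanges the two components $C,C'$ of the corresponding singular fiber, $\tau$ fixes $p$ and its derivative there interchanges the tangent lines $T_pC$ and $T_pC'$; hence the rotation numbers of $\tau g\tau^{-1}$ at $p$ (with respect to the ordered pair of directions $T_pC$, $T_pC'$) are those of $g$ with the two entries transposed, i.e.\ $(-a,a)$ with $a=a_i$. On the other hand $g^{k}$ has rotation numbers $(ka,-ka)$ at $p$, so $ka\equiv -a\pmod m$, and since $\gcd(a,m)=1$ this gives $k\equiv -1\pmod m$; that is, $\tau g\tau^{-1}=g^{-1}$. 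For the index, let $\tau,\tau'\in Q\setminus G_0$ be arbitrary; by the previous paragraph each interchanges the two $(-1)$-spheres in every singular fiber, so $\tau\tau'$ leaves each of them invariant and hence $\tau\tau'\in G_0$ by (b). Thus $|Q/G_0|\le 2$, and since $Q\neq G_0$ we conclude $[Q:G_0]=2$. Combining $g^{m}=\tau^{2}=1$ with $\tau g\tau^{-1}=g^{-1}$ identifies $Q=\langle g,\tau\rangle$ with the dihedral group $D_{2m}$, and the ``moreover'' clause is precisely what was established in the second paragraph.

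I expect the main obstacle to be the exclusion, in the second paragraph, of the case where $\tau$ fixes $S_1$ and $S_2$ individually: this is exactly the place where one must use that $G_0$ \emph{equals}, rather than is merely contained in, the subgroup of $Q$ that stabilizes the singular-fiber configuration componentwise. The remaining steps are routine bookkeeping with fixed-point sets and with the rotation numbers set up in Section~\ref{3-2-2}.
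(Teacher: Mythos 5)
Your proof is correct and follows essentially the same route as the paper: both arguments use the two fixed sections of $G_0$ to show that any element of $Q\setminus G_0$ must interchange them (hence the two $(-1)$-spheres in each singular fiber), that the stabilizer of the pair of sections componentwise is exactly $G_0$ so $[Q:G_0]=2$, and then conclude the dihedral structure. The only cosmetic difference is that you derive $\tau g\tau^{-1}=g^{-1}$ by a rotation-number computation at a node, whereas the paper gets it for free from the previously established fact that every element of $Q\setminus G_0$ is an involution.
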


\begin{proof}
For any $g\in Q$, since $G_0$ is normal in $Q$, $g$ leaves the two fixed sections of $G_0$
invariant. Note that if $g$ leaves each of the section invariant, then it must fix both of them because the
induced action of $g$ on the base is trivial. Consequently, $Q/G_0$ is either trivial or $\Z_2$,
depending on whether there is a $g\in Q$ switching the two fixed sections of $G_0$. It follows easily that 
every element in $Q\setminus G_0$ switches the two $(-1)$-spheres in each singular fiber.
Finally, if $Q\neq G_0$, then $Q$ must be the corresponding dihedral group because each element
in $Q\setminus G_0$ is an involution. ($g^2$ preserves all exceptional spheres hence in $G_0$) This finishes the proof of the proposition.

\end{proof}

Next we consider the case where $G_0$ is trivial. Let $\Sigma$ be the set of singular fibers. 

\begin{proposition}
Suppose $G_0$ is trivial. Then $Q=\Z_2$ or $(\Z_2)^2$. In the latter case, let $\tau_1,\tau_2,\tau_3$ be the
distinct involutions in $Q$. Then $\Sigma$ is partitioned into subsets $\Sigma_1$, $\Sigma_2$,
$\Sigma_3$, where $\Sigma_i$ parametrizes the set of singular fibers of which $\tau_i$
leaves each $(-1)$-sphere invariant, and $\# \Sigma_i\equiv N-1 \pmod{2}$,  for $i=1,2,3$.
\end{proposition}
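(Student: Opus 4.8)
The plan is to first determine $Q$ abstractly, then establish the partition, and finally the parity congruence. It was already shown that every element of $Q\setminus G_0=Q\setminus\{1\}$ is an involution, so $Q$ is a nontrivial elementary abelian $2$-group, and it remains to bound its rank by $2$. For this I would first check that for any regular value $b$ the restriction $Q\to\operatorname{Symp}(\pi^{-1}(b))$ is \emph{effective}: for $\tau\in Q$ the set $\{b'\in\s^2\setminus\Sigma:\tau|_{\pi^{-1}(b')}=\mathrm{id}\}$ is closed, and it is also open (a finite-order symplectomorphism of $\C\P^1$ close enough to the identity is trivial, by Newman's theorem); since $\s^2\setminus\Sigma$ is connected and $\tau$ cannot be trivial on a dense open subset of $X$, either this set is empty or $\tau=\mathrm{id}$. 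A finite group of symplectomorphisms of $\C\P^1$ is conjugate to a subgroup of $SO(3)$, and an elementary abelian $2$-subgroup of $SO(3)$ consists of rotations by $\pi$ about pairwise orthogonal axes, hence has rank at most $2$. Therefore $Q\cong\Z_2$ or $(\Z_2)^2$.

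Now assume $Q=(\Z_2)^2=\{1,\tau_1,\tau_2,\tau_3\}$. For each $F=C_1\cup C_2\in\Sigma$ with node $p$, let $\phi_F\colon Q\to\Z_2$ record the permutation induced on $\{C_1,C_2\}$, and set $\Sigma_i=\{F\in\Sigma:\tau_i\in\ker\phi_F\}$. The central step is a local computation at $p$: near $p$ the fibration is holomorphically modeled on $(z_1,z_2)\mapsto z_1z_2$ with $C_1,C_2$ the two axes, and any $\tau_i$ preserving both components fixes $p$, hence (being of finite order) can be equivariantly linearized to a $\C$-linear map $(z_1,z_2)\mapsto(\lambda_1z_1,\lambda_2z_2)$; preserving $z_1z_2$ forces $\lambda_1\lambda_2=1$ and $\tau_i^2=1$ forces $\lambda_j=\pm1$, so $\tau_i=-\mathrm{id}$ near $p$ (not the identity there, as $\mathrm{Fix}(\tau_i)$ is nowhere dense). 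Consequently at most one $\tau_i$ lies in $\ker\phi_F$: if $\tau_1,\tau_2$ both preserved the components then so would $\tau_3=\tau_1\tau_2$, all three would act as $-\mathrm{id}$ at $p$, and then $\tau_3=\tau_1\tau_2$ would act as $(-\mathrm{id})^2=\mathrm{id}$ near $p$ — impossible. On the other hand $\ker\phi_F\neq\{1\}$ since any homomorphism $(\Z_2)^2\to\Z_2$ has nontrivial kernel, so exactly one $\tau_i$ preserves the components of $F$. This yields the partition $\Sigma=\Sigma_1\sqcup\Sigma_2\sqcup\Sigma_3$.

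For the parity, recall from the analysis of fixed-point sets that $\mathrm{Fix}(\tau_i)$ is the union of a bisection $B_i$ — a closed symplectic surface with $\pi|_{B_i}\colon B_i\to\s^2$ a degree-$2$ branched cover, nonempty by effectiveness — together with the nodes of the singular fibers whose components $\tau_i$ preserves. The same local model identifies the branch locus of $B_i\to\s^2$: over $c$ with $\pi^{-1}(c)\in\Sigma_i$ we have $\tau_i=-\mathrm{id}$ near $p$, so $p$ is an isolated fixed point, while $B_i$ meets $\pi^{-1}(c)$ in the two remaining fixed points of $\tau_i$, one on $C_1$ and one on $C_2$, so $c$ is unbranched; over $c$ with $\tau_i$ switching the components, $p\in B_i$ is a branch point. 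A degree-$2$ branched cover of $\s^2$ has an even number of branch points (Riemann--Hurwitz; or $B_i$ is a pair of spheres when there are none). Hence $\#(\Sigma\setminus\Sigma_i)=(N-1)-\#\Sigma_i$ is even, i.e.\ $\#\Sigma_i\equiv N-1\pmod 2$ for each $i$.

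The main obstacle is the second step: one must exclude the configuration in which all three $\tau_i$ preserve both components of some singular fiber, and the clean way to do so is the equivariant linearization at the node combined with the fact that elements of $Q$ preserve the whole fibration, not merely individual fibers — this is exactly what forces $\lambda_1\lambda_2=1$ and collapses the local action to $\pm\mathrm{id}$. The effectiveness statement in the first step and the identification of the branch locus in the last step are then comparatively routine consequences of the fixed-point bookkeeping already in place.
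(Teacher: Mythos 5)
Your proof is correct, and it reaches the conclusion $Q\cong\Z_2$ or $(\Z_2)^2$ by a genuinely different route than the paper. The paper never restricts the action to a regular fiber: it works entirely with the singular fibers, first showing (by the same rotation-number computation at the node that you perform via equivariant linearization) that no two distinct involutions $\tau,\tau'$ can both preserve the two components of the same singular fiber, so the sets $\Sigma_i$ are pairwise disjoint; combined with $\Sigma\setminus(\Sigma_i\cup\Sigma_j)\subset\Sigma_k$ for $\tau_k=\tau_i\tau_j$ this forces $\Sigma=\Sigma_i\sqcup\Sigma_j\sqcup\Sigma_k$, and the nonemptiness of $\Sigma$ then rules out more than three involutions, yielding the group structure and the partition in one stroke. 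You instead decouple the two: you pin down $Q$ by showing the restriction to a regular fiber is effective and invoking the classification of elementary abelian $2$-subgroups of $\mathrm{Aut}(\C\P^1)\cong PGL(2,\C)$, and only then run the local analysis at the nodes to get the partition. Your approach is cleaner group-theoretically and makes the bound on $|Q|$ transparent, at the cost of importing the finite-subgroup classification and the (slightly delicate, though avoidable) Newman/open-closed argument for effectiveness --- note that effectiveness on each regular fiber already follows from the fixed-point bookkeeping you cite later, since $\mathrm{Fix}(\tau)$ is a bisection plus isolated points and hence meets every regular fiber in exactly two points. The partition step and the parity argument via the branch locus of the bisection are essentially identical to the paper's.
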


\begin{proof}
First of all, $Q$ consists of involutions as $G_0$ is trivial. Suppose $Q\neq \Z_2$, and let $\tau, \tau^\prime\in Q$
be two distinct nontrivial elements. We claim that there is no singular fiber such that both $\tau,\tau^\prime$ leave both of
the $(-1)$-spheres in this fiber invariant. This is because if there is such a singular fiber, then by examining the
action of $\tau\tau^\prime$ at the singular point of the fiber, we see $\tau\tau^\prime$ must be trivial (the rotation numbers at this signular point is
$(0,0)$). But this contradicts the assumption that $\tau\neq \tau^\prime$. 

With the preceding understood, let $\tau_1,\cdots,\tau_n$, $n>1$, be the distinct involutions in $Q$, and 
let $\Sigma_i$ be the set of singular fibers of which $\tau_i$ leaves each $(-1)$-sphere invariant. Then the
previous paragraph shows that $\Sigma_i\cap\Sigma_j=\emptyset $ for $i\neq j$. On the other hand, suppose
$\tau_k=\tau_i\tau_j$, then $\Sigma\setminus (\Sigma_i\cup\Sigma_j)\subset \Sigma_k$, implying 
$$
\Sigma\setminus (\Sigma_i\cup\Sigma_j)=\Sigma_k. 
$$
It follows easily that $n=3$ and $Q=(\Z_2)^2$. 

It remains to see that for each $i$, $\# \Sigma_i=N-1 \pmod{2}$.
Consider the fixed-point set $S_i$ of $\tau_i$. Then $S_i$ is a bisection and the projection of $S_i$ onto the base
is a double branched covering which ramifies exactly at the singular points of those singular fibers not parametrized by
the set $\Sigma_i$. Since the number of ramifications must be even, we have $\# \Sigma_i=N-1 \pmod{2}$ as claimed.
 
 \end{proof}
 
The proof of Theorem \ref{t:Q-structure} is completed.

\section{Minimality and equivariant symplectic cones} 
\label{sec:equivariantcone}

Now let $X=\C\P^2\# N\overline{\C\P^2}$, $N\geq 2$,
which is equipped with a smooth action of a finite group $G$. Suppose there is a $G$-invariant
symplectic form $\omega_0$ on $X$ such that the corresponding symplectic rational 
$G$-surface $(X,\omega_0)$ is minimal. With this understood, we denote by 
$\Omega(X,G)$ the set of $G$-invariant symplectic forms on $X$.

The following is a crucial observation.

\begin{lemma}\label{4-2}
For any $\omega\in \Omega(X,G)$, the canonical class $K_\omega=K_{\omega_0}$ or $-K_{\omega_0}$.
\end{lemma}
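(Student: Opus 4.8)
The plan is to reduce Lemma \ref{4-2} to the structural dichotomy of Theorem \ref{t:X-structure} applied to both $\omega_0$ and an arbitrary $\omega\in\Omega(X,G)$. The key point is that $K_\omega$ is constrained by the induced $G$-action on $H^2(X)$ together with the positivity $\omega(K_\omega)<0$-type inequalities coming from Gromov--Witten/Seiberg--Witten theory, and this constraint forces $K_\omega\in\{K_{\omega_0},-K_{\omega_0}\}$. First I would record that $(X,\omega_0)$ minimal puts us in case (1) or case (2) of Theorem \ref{t:X-structure}: either $H^2(X)^G$ has rank $1$ and is spanned by $K_{\omega_0}$ with $(X,\omega_0)$ monotone, or $H^2(X)^G$ has rank $2$, spanned by $K_{\omega_0}$ and a fiber class $F$ of a minimal symplectic $G$-conic bundle. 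Since $K_\omega\in H^2(X)^G$ always (the canonical class of a $G$-invariant symplectic form is $G$-invariant), in the rank-$1$ case $K_\omega=\lambda K_{\omega_0}$ for some $\lambda\in\Q$; primitivity of $K_\omega$ as a characteristic vector (it is $-3H+E_1+\cdots+E_N$ up to sign in a suitable basis, hence primitive) forces $\lambda=\pm1$, and we are done in that case.

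The substantive case is $\text{rank } H^2(X)^G=2$. Here I would argue as follows. Pick a $G$-invariant $\omega$-compatible $J$. The class $K_\omega$ lies in the rank-$2$ lattice $H^2(X)^G=\Z K_{\omega_0}\oplus\Z F$, so write $K_\omega=pK_{\omega_0}+qF$. Now $K_\omega$ is characteristic for the intersection form on $H^2(X)$, i.e. $K_\omega\cdot x\equiv x\cdot x\pmod 2$ for all $x$; since $K_{\omega_0}$ is characteristic and $F=H-E_1$ has $F\cdot F=0$, the class $pK_{\omega_0}+qF$ is characteristic iff $p$ is odd. Moreover $K_\omega^2 = 9-N = K_{\omega_0}^2$ (Noether's formula / $c_1^2$ is a diffeomorphism invariant), which combined with $K_{\omega_0}^2=9-N$, $F^2=0$, $K_{\omega_0}\cdot F = (-3H+\sum E_i)\cdot(H-E_1) = -3+1 = -2$ gives $p^2(9-N) - 4pq = 9-N$, so $(p^2-1)(9-N) = 4pq$. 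For $N\neq 9$ this forces, together with $p$ odd, either $p=\pm1$ (whence $q=0$ and $K_\omega=\pm K_{\omega_0}$) or a further analysis; for $N=9$ one gets $pq=0$, so $q=0$ (using $p$ odd), again $K_\omega=\pm K_{\omega_0}$. The remaining issue when $N\neq9$ and $|p|>1$ is ruled out using positivity: applying Taubes--Seiberg--Witten / the light cone lemma, exactly one of $K_\omega,-K_\omega$ pairs negatively with $[\omega]$, and a symplectic canonical class of a rational surface must be represented by (a sum of) $J$-holomorphic curves with controlled area, which bounds $|q|/|p|$ and excludes the extra solutions; alternatively, one invokes that $(X,\omega)$ is again a symplectic rational surface so its canonical class is $-K$ for the standard $K$ after a change of basis, and the $G$-invariance plus rank-$2$ hypothesis pins down the basis.

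The cleanest route, which I would actually pursue, bypasses the Diophantine bookkeeping: apply Theorem \ref{t:X-structure} (or rather its proof, valid for any symplectic rational $G$-surface that is minimal) directly to $(X,\omega)$. But $\omega$ need not a priori give a minimal $G$-surface, so I would instead use that $\E_\omega$ and $\E_{-\omega}$ partition the exceptional classes, together with Lemma \ref{l:Pinsonnault}: for a $G$-invariant $J$, the minimal-area class of $\E_\omega$ is represented by an embedded $J$-sphere, and running the reduction-of-exceptional-classes argument of Section \ref{sub:reduction_of_exceptional_classes} equivariantly shows the $G$-orbit structure of exceptional spheres is the same for $\omega$ as for $\omega_0$ (both controlled by the rank-$2$ invariant lattice and the conic bundle); hence the canonical class, being $-3H+\sum E_i$ in the resulting reduced basis, equals $\pm K_{\omega_0}$. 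The main obstacle is precisely this last claim --- showing that the reduced-basis/conic-bundle structure attached to $\omega$ recovers the \emph{same} geometric configuration (up to the $\omega\leftrightarrow-\omega$ symmetry) as $\omega_0$, rather than merely \emph{a} rank-$2$ configuration; this is where one needs that $H^2(X)^G$ is a fixed lattice independent of the form and that $F$ (or its reflection) is the unique primitive isotropic $G$-invariant class with the right pairing with $K$, so I would isolate that uniqueness statement as the key lemma and prove it by intersection-form combinatorics inside the rank-$2$ invariant lattice.
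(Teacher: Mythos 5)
Your rank-$1$ case is fine and matches the paper's (the paper just notes $[\omega_0]$, $[\omega]$, $K_\omega$ are proportional). The rank-$2$ case, however, has a genuine gap. Writing $K_\omega=pK_{\omega_0}+qF$ and imposing that $K_\omega$ is characteristic with $K_\omega^2=9-N$ gives $(p^2-1)(9-N)=4pq$ with $p$ odd (and, if you also pair with $E_1$, $q$ even), and this does \emph{not} pin down $K_\omega$ once $N\geq 10$: for example $N=12$, $p=3$, $q=-2$ satisfies every one of these constraints. You acknowledge that ``a further analysis'' is needed and offer two escape routes --- a Taubes--Seiberg--Witten positivity argument bounding $|q|/|p|$, and a re-run of the reduction/conic-bundle argument for $\omega$ --- but neither is carried out, and you yourself flag the second as resting on an unproved uniqueness lemma. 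As written, the proof does not close exactly in the regime ($N\geq 9$, or more precisely $N\geq 10$) where the statement is least obvious.

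The paper's actual argument uses a different and much shorter device: it expands the \emph{symplectic class} $[\omega]$, not the canonical class, in the invariant lattice. Since $[\omega]\in H^2(X;\R)^G=\R K_{\omega_0}\oplus\R F$, after replacing $\omega$ by $-\omega$ and rescaling one may write $[\omega]=-K_{\omega_0}+bF=(3+b)H-(1+b)E_1-E_2-\cdots-E_N$ in the reduced basis attached to $\omega_0$. The single inequality $[\omega]^2>0$ gives $4b>N-9$, hence $b\ge -1$ since $N\ge 5$, which is exactly what is needed for this expression to be a \emph{reduced class} in the sense of Li--Li; their Lemma 3.4(5) then says any symplectic class in reduced form has canonical class $K_{\omega_0}$, and the lemma follows. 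So the missing ingredient in your write-up is precisely this appeal to the classification of symplectic canonical classes via reduced forms (\cite{LL}); without it, or a worked-out substitute, the $N\geq 10$ cases remain open in your argument.
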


\begin{proof}

The lemma is obvious if $H^2(X)^G$ has rank $1$ because $[\w_0],[\w]$ and $K_\w$ are proportional. 

Assume that  $H^2(X)^G$ has rank $2$. Recall from the proof of Theorem \ref{t:X-structure}, there is a reduced basis 
$H,E_1,\cdots, E_N$ of $(X,\omega_0)$ such that $K_{\w_0}= -3H + E_1 +E_2 + \cdots + 
E_N$. Now after changing $\omega$ by sign and with a further scaling if necessary, we can write 
$[\w]=-K_{\w_0}+bF= (3+b)H - (1+b) E_1 -\cdots -E_N$ for some $b\in\R$. 
We claim that $[\w]$ is always a \textit{reduced class} in the sense of \cite{LL}.  
To see this, note that the condition $[\w]^2>0$ implies $4b>N-9$, which implies $b\ge-1$
since $N\geq 5$. This gives $3+b>1+b>0$ and $3+b=1+b+1+1$.  Now the conclusion 
follows from \cite[Lemma 3.4 (part 5)]{LL}, which asserts that any symplectic class with reduced form has canonical class 
$K_{\w_0}$.

\end{proof}

Recall that we have shown that a minimal symplectic rational $G$-surface where $X=\C\P^2\# N\overline{\C\P^2}$
admits a minimal symplectic $G$-conic bundle only when  $N\geq 5$ and $N\neq 6$.
The following lemma deals with precisely these cases and gives the converse of this fact. 

\begin{lemma}\label{l:4-1}
Let $(X,\omega)$ be a symplectic rational $G$-surface where $X=\C\P^2\# N\overline{\C\P^2}$
with $N\geq 5$ and $N\neq 6$. Suppose $(X,\omega)$ admits a minimal symplectic $G$-conic bundle 
structure. Then for any $\omega^\prime \in \Omega(X,G)$ such that $K_{\omega^\prime}=K_\omega$,
the symplectic rational $G$-surface $(X,\omega^\prime)$
is minimal. 
\end{lemma}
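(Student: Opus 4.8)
The plan is to argue by contradiction and reduce the assertion to a Diophantine non-existence statement, in which the excluded value $N=6$ will show up precisely as the case where the relevant equation has a solution.

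So suppose $(X,\omega^\prime)$ is not minimal: there is a nonempty $G$-invariant disjoint union of $\omega^\prime$-symplectic $(-1)$-spheres $C_1,\dots,C_k$, and I set $e_i:=[C_i]\in\mathcal{E}_{\omega^\prime}$, so the $e_i$ are pairwise orthogonal and permuted by $G$. I would fix the basis $H,E_1,\dots,E_N$ of $H^2(X)$ attached to the minimal symplectic $G$-conic bundle of $(X,\omega)$ as in \S2.1, so that $F:=H-E_1$ is the fiber class and $K_\omega=-3H+E_1+\cdots+E_N$; by Lemma \ref{l:ConicRank}, $H^2(X)^G=\Z K_\omega\oplus\Z F$. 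Since $K_{\omega^\prime}=K_\omega$ and $G$ permutes the $e_i$, the class $e:=e_1+\cdots+e_k$ lies in $H^2(X)^G$, so $e=pK_\omega+qF$ with $p,q\in\Z$.

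Next I would collect the numerics. Adjunction together with $K_{\omega^\prime}=K_\omega$ gives $K_\omega\cdot e_i=-1$, hence $K_\omega\cdot e=-k$; disjointness gives $e^2=\sum_i e_i^2=-k$ and $e_i\cdot e=-1$ for each $i$. Using that the fiber class $F$ is nef (its genus-zero Gromov--Witten invariant is nonzero, so for generic $\omega^\prime$-tame $J^\prime$ both $F$ and $e_i$ are $J^\prime$-holomorphically represented, with distinct images since $F^2=0\ne-1=e_i^2$, whence $F\cdot e_i\ge0$ by positivity of intersections), one gets $F\cdot e\ge0$; but $F\cdot e=p(K_\omega\cdot F)+qF^2=-2p$, so $p\le0$, and $p=0$ would give $e=qF$ with $e^2=0$, contradicting $k\ge1$, so $p\le-1$. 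From $e_i\cdot e=-p+q(F\cdot e_i)=-1$ and $p\le-1$ one gets $q(F\cdot e_i)=p-1\le-2$, forcing $F\cdot e_i>0$ and $q\le-1$. Finally $e^2=K_\omega\cdot e$ means $e\cdot(e-K_\omega)=0$, which after expanding $e=pK_\omega+qF$ (with $K_\omega^2=9-N$, $K_\omega\cdot F=-2$, $F^2=0$) becomes
\[
p(p-1)(9-N)=2q(2p-1),
\]
and with $m:=-p\ge1$, $l:=-q\ge1$ this reads $m(m+1)(9-N)=2l(2m+1)$.

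The last step is elementary: $2m+1$ is odd and $\ge3$, hence coprime to $2$, to $m$, and to $m+1$, so it is coprime to $2m(m+1)$; as $2m+1$ divides the right-hand side of $m(m+1)(9-N)=2l(2m+1)$, it must divide $9-N$. If $N\ge10$ the left-hand side is negative while the right-hand side is positive; if $N=9$ the left-hand side is $0$, forcing $l=0$; and if $N\in\{5,7,8\}$ then $9-N\in\{4,2,1\}$ admits no odd divisor $\ge3$. Each possibility is a contradiction, so no such configuration exists and $(X,\omega^\prime)$ is minimal. This also explains why $N=6$ is excluded: there $9-N=3$, and $m=l=1$ (i.e.\ $e=-K_\omega-F$) recovers the $G$-invariant exceptional class $2H-E_2-\cdots-E_6$ of Lemma \ref{l:nosixblowup}. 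I expect the only genuinely non-bookkeeping ingredient to be the nef-ness of the fiber class $F$ with respect to the new form $\omega^\prime$ (note $\omega^\prime$ need not be connected to $\omega$ through $G$-invariant forms); once that is granted, the rest is the intersection-form computation and the divisibility argument above.
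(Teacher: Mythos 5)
Your proof is correct and follows essentially the same route as the paper's: decompose the total class of the invariant exceptional configuration as an integral combination of $K_\omega$ and the fiber class $F$ in the rank-two invariant lattice, use $J$-holomorphic representatives and positivity of intersections to pin down the sign of the $K_\omega$-coefficient, and then derive a contradiction from the fact that an odd integer $\geq 3$ (your $2m+1$, the paper's $|2a-1|$) would have to divide $K_\omega^2=9-N$. The paper packages the arithmetic as $m=-a^2K_\omega^2/(2a-1)$ rather than your $m(m+1)(9-N)=2l(2m+1)$, and it establishes $F\cdot C\geq 0$ by keeping the embedded square-zero fiber for the old $J$ and decomposing each exceptional class into $J$-curves (slightly cleaner than representing $F$ for a generic $\omega'$-tame $J'$), but these are cosmetic differences.
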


\begin{proof}
By Lemma \ref{l:ConicRank}, $H^2(X)^G$ is of rank $2$ spanned by $K_\w$ and the fiber class $F$ of
the symplectic $G$-conic bundle. Now suppose to the contrary that $(X,\omega^\prime)$
is not minimal. Then there is a $G$-invariant, disjoint union of $\omega^\prime$-symplectic
$(-1)$-spheres $C_1,C_2,\cdots, C_m$. Let $C=C_1+\cdots +C_m$. Then since 
$K_{\omega^\prime}=K_\omega$, we have 
$$
-m=C^2=K_\omega\cdot C.
$$
On the other hand, $C\in H^2(X)^G$, so that 
\begin{equation}\label{e:4-1}
     C=aK_\omega+bF,\hskip 2mm a,b\in \Z.
\end{equation}


The key ingredient for deriving a contradiction is the fact that $F\cdot C\geq 0$,  which 
as a corollary implies that $a<0$, and so that $F\cdot C>0$. 
To see this, Note that $F$ is represented by an embedded
$J$-holomorphic sphere $S$ where $J$ is $\omega$-compatible. 
On the other hand, since $K_{\omega^\prime}=K_\omega$, $\omega$ and $\omega^\prime$ have the same set of
symplectic $(-1)$-classes. Hence since the class of each $C_i$ is represented by a 
$\omega^\prime$-symplectic $(-1)$-sphere, it is also represented by a $\omega$-symplectic 
$(-1)$-sphere. Consequently, the class of $C_i$ can be represented by $\cup_j m_j D_j$ 
where $D_j$
is a $J$-holomorphic curve and $m_j>0$. Now since $S$ is irreducible and $S^2=0$, we
have $S\cdot D_j\geq 0$ for each $j$, which implies that $F\cdot C_i\geq 0$ for each $i$.
Hence our claim that $F\cdot C\geq 0$. 

With this understood, we consider the pairing of $C=aK_\omega+bF$ with $K_\omega$ 
and then square both sides, we have 

$$aK_\omega^2-2b=-m$$
$$-m=a^2K_\omega^2-4ab$$
which gives 

\begin{equation}\label{e:coprime}
m=-\frac{a^2K_\omega^2}{2a-1}.    
\end{equation}
Notice that $m>0$, hence $K_\omega^2>0$, which excludes all cases for $N\ge9$.  
Moreover, $a^2$ and $2a-1$ are co-prime for all $a\le -1$, therefore, $2a-1|K_\omega^2$.  But this is not
possible because of the assumption $N\geq 5$ and $N\neq 6$.

\end{proof}


{\bf Proof of Theorem \ref{t:minimality}}

\vspace{3mm}

The claim regarding the canonical class is proved in Lemma \ref{4-2}. The minimality claims
are trivial if $H^2(X)^G$ is of rank $1$. For the case $\text{rank} H^2(X)^G>1$, they
follow from Lemma \ref{4-2}, Lemma \ref{l:4-1} for $N\ge5$ and $N\neq 6$; and Lemma \ref{l:nosixblowup} for $N=6$. 
Theorem \ref{t:X-structure} implies these are the only cases to consider.  Combining these results with
Theorem 3.8 of \cite{DI}, we cover the case (2) of minimal complex rational $G$-surfaces.\qed

\vspace{3mm}

In Theorem \ref{t:sixPoints}, the case of complex rational $G$-surface admits an alternative proof, which we sketch
it here. Let $X$ be a minimal complex rational $G$-surface which is $\C\P^2$ blown up at $6$ points,
and assume $X$ is a conic bundle. 
Then by Proposition 5.2 of \cite{DI}, $X$ must be Del Pezzo, and hence has a $G$-invariant
monotone K\"{a}hler form $\omega$.  Part (1) of Lemma \ref{4-5} below implies there are two distinct fiber classes in $H^2(X)^G$, but $N=6$ contradicts part (2) of the lemma.

\vskip 5mm
We now turn to the proof of Theorem \ref{t:sympCone}.  Fixing the canonical class $K_{\omega_0}$, we recall that $F\in H^2(X)^G$ is called a \textbf{fiber class} if it is the class of the fibers of a symplectic
$G$-conic bundle on $X$ for some $G$-invariant symplectic form $\w$ with $K_\w=K_{\omega_0}$.

 \begin{lemma}\label{4-5}
 Suppose $H^2(X)^G$ has rank $2$.
 
\begin{enumerate}[(1)]
   \item If $X$ admits a $G$-invariant monotone symplectic form, then there are at least two distinct
 fiber classes in $H^2(X)^G$.
   \item Suppose $F,F^\prime\in H^2(X)^G$ are distinct fiber classes. Then $F+F'=-aK_{\omega_0}$ 
 for some integer $a>0$, and $N=5,7$ or $8$ where $a=1, 2, 4$ respectively. In particular, there
 are at most two distinct fiber classes in $H^2(X)^G$.

 \end{enumerate}
\end{lemma}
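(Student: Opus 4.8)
The plan is to treat the two parts in turn; throughout write $K:=K_{\omega_0}$. I will use repeatedly that, by Lemma \ref{l:ConicRank}, any fiber class $F$ realized by a $G$-conic bundle on some $(X,\omega)$ with $K_\omega=K$ gives an \emph{integral} basis $\{K,F\}$ of $H^2(X)^G$, and that adjunction applied to a regular fiber forces $F^2=0$ and $K\cdot F=-2$; also $K$ is primitive in $H^2(X;\Z)$.

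\textbf{Part (1).} Here $H^2(X)^G$ has rank $2$ and $X$ carries a $G$-invariant monotone form $\omega_m$; replacing $\omega_0$ by $\omega_m$ if necessary (legitimate by Theorem \ref{t:minimality}(1)) we may assume $K_{\omega_m}=K$, so $[\omega_m]=-a_mK$ with $a_m>0$. I would perturb $\omega_m$ in two opposite directions: choose a $G$-invariant closed $2$-form $\eta$ whose class is not proportional to $K$, and set $\omega_\pm:=\omega_m\pm\epsilon\eta$ for small $\epsilon>0$. As in case (iii) of the proof of Theorem \ref{t:X-structure}, each $\omega_\pm$ is non-monotone, $G$-invariant and minimal, so case (i) of that proof produces $G$-conic bundle structures on $(X,\omega_\pm)$ with fiber classes $F_\pm$. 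I would then show $F_+\neq F_-$: if $F_+=F_-=:F$, write $[\omega_\pm]=-a_\pm K+b_\pm F$ in the basis $\{K,F\}$; pairing with a section $E_1^\pm$ of the respective bundle (so $E_1^\pm\cdot K=-1$, $E_1^\pm\cdot F=1$) and using Remark \ref{rem:ImplicationOnAG}(c) gives $b_\pm=\omega_\pm(E_1^\pm)-\tfrac{1}{2}\omega_\pm(F)>0$ by Lemma \ref{l:structureLemma}(ii); but $[\omega_+]+[\omega_-]=2[\omega_m]=-2a_mK$ has vanishing $F$-coefficient, forcing $b_++b_-=0$, a contradiction. Hence there are at least two distinct fiber classes.

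\textbf{Part (2).} Let $F,F'$ be distinct fiber classes. Since both $\{K,F\}$ and $\{K,F'\}$ are integral bases of $H^2(X)^G$, write $F'=xK+yF$ with $x,y\in\Z$ and $y=\pm1$. Imposing $(F')^2=0$ and $F'\cdot K=-2$, and using $F^2=0$, $F\cdot K=-2$, $K^2=9-N$: the case $y=1$ forces $x=0$ and $F'=F$, so $y=-1$ and $xK^2=-4$. Thus $K^2\neq0$, $x=-4/K^2$, and, with $a:=-x=4/K^2$,
$$ F+F'=xK=-aK,\qquad F\cdot F'=2a. $$
Primitivity of $K$ makes $a$ an integer. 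The routine part ends here; the crux is to show $a>0$, equivalently $K^2>0$, i.e.\ to exclude $N\ge9$. For this I would use positivity of intersections: let $\omega$ realize the $G$-conic bundle with fiber $F$, fix a $G$-invariant $\omega$-compatible $J$ making the fibers $J$-holomorphic, and let $\omega'$ realize the bundle with fiber $F'$. In the adapted basis of Remark \ref{rem:ImplicationOnAG}(c) for $\omega'$ one has $F'=E_2'+(H'-E_1'-E_2')$, a sum of two classes of square $-1$ pairing $-1$ with $K$. These are symplectic $(-1)$-classes for $\omega'$, hence (same canonical class, as in the proof of Lemma \ref{l:4-1}) also symplectic $(-1)$-classes for $\omega$; in particular $\omega(F')>0$ and $F'$ is represented by a $J$-holomorphic curve. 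Since $F$ is the fiber class of an honest $J$-holomorphic fibration, every $J$-holomorphic curve meets $F$ non-negatively, so $F\cdot F'\ge0$, i.e.\ $2a\ge0$; as $a\neq0$ this gives $a>0$. Then $a>0$, $K^2=9-N>0$ and $aK^2=4$ force $(a,K^2)\in\{(1,4),(2,2),(4,1)\}$, i.e.\ $N=5,7,8$ with $a=1,2,4$ respectively. Finally, a third fiber class $F''\neq F$ would satisfy $F+F''=-aK=F+F'$, hence $F''=F'$; so there are at most two.

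The step I expect to be the main obstacle is establishing $a>0$ (equivalently ruling out $K^2\le0$, i.e.\ $N\in\{10,11,13\}$): the lattice computation by itself does not see this, and closing the gap requires combining the decomposition of $F'$ into symplectic $(-1)$-classes, the fact that $\omega$ and $\omega'$ share their set of symplectic $(-1)$-classes because $K_\omega=K_{\omega'}$, and $J$-holomorphic positivity against the fibration defined by $F$.
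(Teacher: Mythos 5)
Your argument is correct and follows the paper's proof in all essentials: the two-sided perturbation of the monotone form for part (1), and the lattice computation $aK_{\omega_0}^2=4$ combined with positivity of intersections of $J$-holomorphic representatives to force $a>0$ (hence $N=5,7,8$) in part (2). The only cosmetic differences are that you verify $F_+\neq F_-$ via the sign of the $F$-coefficient of $[\omega_\pm]$ rather than by the paper's direct area comparison of the small fiber classes, and that you produce a $J$-holomorphic representative of $F'$ by decomposing it into two exceptional classes rather than by invoking the nontrivial Gromov--Witten invariant of the square-zero class $F'$ itself.
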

 
\begin{proof}
To prove (1), let $\omega$ be a $G$-invariant monotone form on $X$, and let 
$F$ be the fiber class of the $G$-conic bundle structure obtained from Theorem 1.3. Pick
a $G$-invariant closed $2$-form $\eta$ representing $F$. Then for sufficiently small 
$\epsilon\neq 0$, the $G$-invariant symplectic form $\omega^\prime:=\omega+\epsilon\eta$ 
is non-monotone, and the symplectic $G$-manifold $(X,\omega^\prime)$ is minimal
(as shown in the proof of Theorem \ref{t:X-structure}). 
By Theorem \ref{t:X-structure}, $(X,\omega^\prime)$ admits a symplectic $G$-conic bundle with small
fiber. An easy check with the symplectic areas shows that for $\epsilon>0$, the small fiber class (whose area is twice the minimal exceptional spheres)
of the symplectic $G$-conic bundle equals $F$, but for $\epsilon<0$, it is not $F$.


To prove (2), let $\omega,\omega^\prime$ be the $G$-invariant symplectic forms associated with the
symplectic $G$-conic bundles whose fiber classes are $F,F^\prime$ respectively. 
For simplicity, we set $K=K_{\omega_0}$. 

Write 
\begin{equation}\label{e:KF}
     F^\prime=-aK +b F,
\end{equation}
for some $a,b\in\Z$. Note that $K\cdot F=K\cdot F^\prime=-2$, 
$F^2=(F^\prime)^2=0$, and the assumption that
$F\neq F^\prime$ implies $a\neq0$.  Then by pairing \eqref{e:KF} with $K$, $F$ and $F'$, respectively, one has $-aK^2=-4$ and $b=-1$. Therefore, $a$ and $K^2$
are both divisors of $4$, and $F+F'=-aK$.

We claim $2a=F\cdot F'\geq 0$, which implies $a>0$ and $N=5,7,8$. The point is that $F$
can be represented by an embedded $J$-holomorphic sphere $V$ with $V^2=0$, where 
$J$ is $\omega$-compatible. Since $K_\w=K_{\w^\prime}$, this fact implies the Gromov-Witten invariant of $F'$ is nontrivial, 
hence $F^\prime$
can be represented by a stable $J$-holomorphic curves, from which our claim $F\cdot F'\geq 0$
follows easily.

\end{proof}

\begin{lemma}
Suppose $G_0$ is nontrivial. Then there is a unique fiber class.
\end{lemma}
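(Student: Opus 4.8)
The plan is to argue by contradiction: assume $G_0$ is nontrivial and that there exist two \emph{distinct} fiber classes $F\neq F'$ in $H^2(X)^G$. Fix a generator $g_0$ of $G_0$; by Theorem \ref{t:Q-structure}(1) (which I will check applies) $G_0\cong\Z_m$ with $m>1$. The first step is a bookkeeping observation: $g_0$ lies inside the subgroup ``$G_0$'' attached to \emph{each} of the two symplectic $G$-conic bundles. Indeed, the bundle $\pi\colon X\to\s^2$ with fiber class $F$ is a $G$-invariant Lefschetz fibration whose $N-1\geq 3$ singular fibers are labelled by distinct exceptional classes; since $g_0$ acts trivially on $H^2(X)$ it fixes each of these classes, hence carries each singular fiber to itself, hence acts trivially on the base $\s^2$. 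Thus $g_0$ lies in the $Q$ of this bundle, and being homologically trivial, in its $G_0$; the same applies to the bundle $\pi'$ with fiber class $F'$. Moreover each associated symplectic form lies in $\Omega(X,G)$, so by Theorem \ref{t:minimality}(1) the underlying symplectic $G$-manifold is minimal, whence both conic bundles are minimal and Theorem \ref{t:Q-structure}(1) does apply to each.

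Next I would invoke the description of $\mathrm{Fix}(g_0)$ from the proof of Theorem \ref{t:Q-structure}(1): relative to $\pi$, the fixed-point set of $g_0$ is $S_1\sqcup S_2\sqcup\{p_1,\dots,p_{N-1}\}$, where the $p_i$ are the singular points of the singular fibers and $S_1,S_2$ are two disjoint embedded $J$-holomorphic sections with $S_i^2=-(N-1)/2$ (in particular $N$ is odd). I claim $[S_1]+[S_2]\in H^2(X)^G$: since $G$ normalizes $G_0\cong\Z_m$ and acts on it by automorphisms, for any $g\in G$ the element $g g_0 g^{-1}$ is again a generator, so $\mathrm{Fix}(gg_0g^{-1})=\mathrm{Fix}(g_0)$ (two powers generating the same cyclic subgroup have equal fixed sets), hence $g$ maps $\mathrm{Fix}(g_0)$ to itself and permutes its two-dimensional components $\{S_1,S_2\}$. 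Using $H^2(X)^G=\langle K_{\omega_0},F\rangle$ (Lemma \ref{l:ConicRank}), write $[S_1]+[S_2]=\alpha K_{\omega_0}+\beta F$ and pair with $F$ and with $K_{\omega_0}$: with $F^2=0$, $K_{\omega_0}\cdot F=-2$, $K_{\omega_0}^2=9-N$, with $S_i\cdot F=1$ (each $S_i$ is a section), and with adjunction $K_{\omega_0}\cdot S_i=-2-S_i^2=\tfrac{N-5}{2}$, the equations $-2\alpha=2$ and $\alpha(9-N)-2\beta=N-5$ give $\alpha=-1$, $\beta=-2$; that is, $[S_1]+[S_2]=-K_{\omega_0}-2F$.

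The identical computation carried out for $\pi'$ — whose ``$G_0$'' also contains $g_0$ — shows that the two-dimensional part of $\mathrm{Fix}(g_0)$ is a pair of disjoint sections $S_1',S_2'$ with $[S_1']+[S_2']=-K_{\omega_0}-2F'$. But the union of the two-dimensional components of the fixed-point set of the finite-order diffeomorphism $g_0$ is an intrinsic subset of $X$, independent of any conic bundle structure, so $S_1\cup S_2=S_1'\cup S_2'$ and therefore $-K_{\omega_0}-2F=[S_1]+[S_2]=[S_1']+[S_2']=-K_{\omega_0}-2F'$, forcing $F=F'$, a contradiction. This shows there is at most one fiber class; since $H^2(X)^G$ has rank $2$, Theorem \ref{t:X-structure}(2) applied with $\omega_0$ produces one, so the fiber class is unique. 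I expect the only subtle steps to be the two bookkeeping claims above — that $g_0$ belongs to the ``$G_0$'' of each bundle, and that $[S_1]+[S_2]$ is $G$-invariant — with the homology computation being routine.
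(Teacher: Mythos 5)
Your proof is correct and rests on the same key geometric input as the paper's: the two $2$-dimensional components $S_1,S_2$ of $\mathrm{Fix}(G_0)$, which are sections of self-intersection $-(N-1)/2$ for \emph{both} conic bundles. Where you diverge is in how the contradiction is extracted. The paper pairs: from $S_1\cdot F=S_1\cdot F'=1$ and Lemma \ref{4-5} ($F+F'=-aK_{\omega_0}$, $a>0$) it gets $K_{\omega_0}\cdot S_1=-2/a<0$, contradicting adjunction $K_{\omega_0}\cdot S_1=(N-5)/2\geq 0$. You instead solve for $[S_1]+[S_2]=-K_{\omega_0}-2F$ in the invariant lattice and observe that the left side is intrinsic to $g_0$, so it pins down $F$; this bypasses Lemma \ref{4-5} entirely and is self-contained, and your two ``bookkeeping'' claims ($g_0$ lies in the $G_0$ of each bundle; $G$ permutes $\{S_1,S_2\}$) are correct and are points the paper leaves implicit. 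One shared subtlety worth noting: both arguments tacitly assume that the orientations induced on $S_i$ by $\omega$ and by $\omega'$ (equivalently, by the two invariant almost complex structures) agree — the paper when it writes $S_1\cdot F=S_1\cdot F'=1$ with a single orientation, you when you pass from $S_1\cup S_2=S_1'\cup S_2'$ to equality of homology classes. For $N=7$ this is harmless (either sign choice yields a contradiction), but for $N=5$ one has $K_{\omega_0}\cdot S_i=0$, so adjunction does not fix the orientation and the opposite-orientation scenario is homologically consistent with $F+F'=-K_{\omega_0}$; a sentence justifying the orientation match would tighten both your proof and the paper's, but since the published argument makes the identical implicit move, I do not count this against you.
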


\begin{proof}
We note first that the $2$-dimensional fixed point set of $G_0$ consists of two embedded
$2$-spheres $S_1,S_2$ , each with self-intersection $-(N-1)/2$ (in particular, $N$ must be odd).
Suppose to the contrary that there are two distinct fiber classes $F,F^\prime$. Then $S_1$
(and $S_2$) is a $J$-holomorphic section of the corresponding symplectic $G$-conic bundles
with fiber classes $F,F^\prime$, for an appropriate $\omega$ or $\omega^\prime$-compatible
$G$-invariant $J$. This implies that $S_1\cdot F=S_1\cdot F^\prime=1$. On the other hand,
by Lemma \ref{4-5}, $F+F^\prime=-aK_{\omega_0}$ for some $a>0$, implying that
$K_{\omega_0}\cdot S_1<0$. This violates the adjunction formula because $S_1^2=
-(N-1)/2\leq -2$, and $K_\w=K_{\w^\prime}=K_{\omega_0}$.

\end{proof}

{\bf Proof of Theorem \ref{t:sympCone}}

\vspace{3mm}

Part (1) and part (2) follow immediately from Lemma \ref{4-5} and Lemma 4.4, and the proof of Theorem \ref{t:X-structure}.  In the case (2) when there is a unique fiber class $F$ for the $G$-conic bundle, any $G$-invariant symplectic form has the form $(3H-E_1-\cdots-E_m)+bF$.  From Theorem \ref{t:X-structure}, $\w(E_1)\ge\w(E_k)$, hence $b\ge0$.  This also shows $C(X,G,F)\cup C(X,G,F')=C(X,G)$.

To see that $C(X,G,F)\cap C(X,G,F^\prime)$ is either empty or consists of
classes of $G$-invariant monotone symplectic forms, let $[\omega]\in
C(X,G,F)$ and $[\omega^\prime]\in C(X,G,F^\prime)$ such that $[\omega]=[\omega^\prime]$.
Then $[\omega]=-a K_{\omega_0}+bF$, 
$[\omega^\prime]=-a^\prime K_{\omega_0}+b^\prime F^\prime$, where $a,a^\prime>0$ and
$b,b^\prime\geq 0$. If both $b,b^\prime$ are non-zero, then $[\omega]=[\omega^\prime]$,
together with the fact that $F+F^\prime$ is a multiple of $K_{\omega_0}$ (cf. Lemma \ref{4-5}),
would imply that $F,F^\prime$ are linearly dependent, a contradiction.

To see (3), first we note that if $[\omega]\in \hat{C}(X,G,F)$, then for any 
$\delta>\delta_{\omega,F}$,
there is an $\omega^\prime\in\Omega(X,G)$ such that $[\omega^\prime]\in \hat{C}(X,G,F)$
with $\delta_{\omega^\prime,F}=\delta$. We can simply take $\omega^\prime:=
\omega+ (\delta-\delta_{\omega,F})\pi^\ast \eta$, where $\pi$ is a symplectic $G$-conic 
bundle on $(X,\omega)$ with fiber class $F$, and $\eta$ is an area form on the base of $\pi$
with total area $1$. Secondly, if $\delta_{X,G,F}>0$, then it can not be attained. Suppose to
the contrary that there is an $\omega$ such that $\delta_{\omega,F}=\delta_{X,G,F}$.
Then take $0<\epsilon<\delta_{X,G,F}$ sufficiently small, the $G$-invariant form
$\omega^\prime:=\omega-\epsilon \eta$, where $\eta$ is a $G$-invariant closed $2$-form representing $F$, is a symplectic form. The condition $\epsilon<\delta_{X,G,F}$ implies
that $[\omega^\prime]\in \hat{C}(X,G,F)$, which contradicts the definition of $\delta_{X,G,F}$. 

\vspace{3mm}

We end this section with a uniqueness result on the subgroup $Q$ from Definition 1.7.
 
 \begin{proposition}
 Suppose $(X,G)$ has a unique fiber class $F$. Then the normal subgroup $Q$ of $G$ is uniquely
 determined, i.e., it is independent of $\omega\in \Omega(X,G)$, nor the symplectic $G$-conic
 bundle structure on $(X,\omega)$ involved in the definition of $Q$. 
 \end{proposition}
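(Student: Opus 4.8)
The plan is to give an intrinsic description of $Q$ as a subset of $G$ that depends only on the smooth $G$-action on $X$ and on the cohomology class $F$, and then to invoke the hypothesis that $F$ is the unique fiber class.

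First I would carry out a normalization. By Lemma~\ref{4-2} every $\omega\in\Omega(X,G)$ satisfies $K_\omega=\pm K_{\omega_0}$; replacing $\omega$ by $-\omega$ changes neither the underlying smooth Lefschetz fibration $\pi\colon X\to\s^2$ nor the subgroup $Q$, while it flips $K_\omega$. Moreover, if the fibers of $\pi$ are $J$-holomorphic for some $G$-invariant, $\omega$-compatible $J$, then they are $(-J)$-holomorphic for the $G$-invariant, $(-\omega)$-compatible structure $-J$. Hence any symplectic $G$-conic bundle entering Definition~\ref{d:QandG0} is, after possibly replacing $\omega$ by $-\omega$, a symplectic $G$-conic bundle with $K_\omega=K_{\omega_0}$, so that its fiber class is a fiber class in the sense of Theorem~\ref{t:sympCone} and therefore equals $F$. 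Thus we may assume throughout that $\pi$ has fiber class $F$.

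The core of the argument is the following claim: for $1\ne g\in G$ one has $g\in Q$ if and only if the union $\Phi_g$ of the $2$-dimensional components of the fixed-point set $\mathrm{Fix}(g)\subset X$ satisfies $[\Phi_g]\cdot F\ne 0$ (while $1\in Q$ trivially). To prove the ``only if'' direction I would fix a $G$-invariant $J$ compatible with the relevant $\omega$; then $g$ preserves every fiber of $\pi$, and on a generic smooth fiber, which is $J$-holomorphically a copy of $\P^1$, it acts as a finite-order biholomorphism which must be nontrivial, since otherwise $g$ would be the identity on a dense open subset of $X$ and hence on $X$. A nontrivial finite-order automorphism of $\P^1$ has exactly two fixed points, so $\mathrm{Fix}(g)$ meets each generic fiber in two points; therefore $\Phi_g$ is a $J$-holomorphic curve with $\pi(\Phi_g)=\s^2$, and $[\Phi_g]\cdot F=\#\bigl(\Phi_g\cap\pi^{-1}(\mathrm{pt})\bigr)=2\ne 0$. (Alternatively one may quote the description of $\mathrm{Fix}(g)$ for $g\in Q$ obtained in Section~\ref{subsec:proof_of_theorem_ref_}, which exhibits a bisection inside $\mathrm{Fix}(g)$.) For the ``if'' direction I would argue contrapositively: if $g\notin Q$, then its image $\bar g\in P=G/Q$ is nontrivial, so $\mathrm{Fix}(\bar g)\subsetneq\s^2$; choosing $p\in\s^2\setminus\mathrm{Fix}(\bar g)$, equivariance of $\pi$ forces $\mathrm{Fix}(g)\cap\pi^{-1}(p)=\emptyset$, whence $\Phi_g$ is disjoint from $\pi^{-1}(p)$ and $[\Phi_g]\cdot F=[\Phi_g]\cdot[\pi^{-1}(p)]=0$.

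Granting the claim, membership in $Q$ is expressed purely through the fixed-point sets of the diffeomorphisms $g\in G$ and the homology classes of their components — data intrinsic to the smooth $G$-action — together with the class $F$, which by the normalization step is the same for every admissible $\omega\in\Omega(X,G)$ and every choice of conic bundle structure. Consequently $Q$ does not depend on these choices, which is the assertion. I expect the delicate points to be the normalization step — forcing the fiber class attached to $Q$ to be the hypothesized $F$, including the case $K_\omega=-K_{\omega_0}$ — and the verification that a nontrivial element of $Q$ necessarily fixes a curve that dominates the base; both are short once the observations above are in place, but that is where the content lies.
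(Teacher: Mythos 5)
Your argument is correct, but it proceeds along a genuinely different route from the paper's. The paper never looks at fixed-point sets here: it characterizes $Q$ homologically as $Q_{(H,E_i)}=\{g\in G: g\cdot E_j=E_j \text{ or } H-E_1-E_j \text{ for all } j\}$, using the fact that an element preserving all $N-1\geq 3$ singular fibers must act trivially on the base, and then compares the two reduced bases attached to the two conic bundles: after normalizing $\omega(F)=\omega'(F)=2$, Lemma \ref{l:structureLemma} forces $\{E_j',H'-E_1'-E_j'\}=\{E_k,H-E_1-E_k\}$ for each $j$, so the two homological characterizations coincide. Your proof instead extracts an intrinsic criterion from the smooth action alone — $g\in Q\setminus\{1\}$ iff the two-dimensional part $\Phi_g$ of $\mathrm{Fix}(g)$ pairs nontrivially with $F$ — which bypasses the reduced-basis comparison entirely, makes the role of the uniqueness hypothesis on $F$ transparent, and handles the $K_\omega=-K_{\omega_0}$ normalization explicitly (something the paper leaves implicit). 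What the paper's route buys in exchange is the slightly finer conclusion that the two conic bundles have the same collection of invariant pairs of exceptional classes, which stays inside the machinery already built in Section 2. Two small points in your write-up deserve tightening. First, "otherwise $g$ would be the identity on a dense open subset" is not quite the right justification that $g\in Q\setminus\{1\}$ acts nontrivially on each fiber: the correct argument is that if $g$ fixed a fiber pointwise, then at a point of that fiber $dg$ would be the identity on the fiber direction and induce the identity on the normal direction (since $\bar g=\mathrm{id}$ on the base), hence be unipotent and, by finiteness of order, equal to the identity, forcing $g=1$; alternatively just quote the bisection description of $\mathrm{Fix}(g)$ from Section \ref{subsec:proof_of_theorem_ref_}, as you suggest. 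Second, $[\Phi_g]$ requires orienting the fixed surfaces; it is cleaner to state the criterion component-by-component ("some $2$-dimensional component of $\mathrm{Fix}(g)$ pairs nontrivially with $F$"), which is sign-independent, or to note that positivity of intersections of $J$-holomorphic curves gives $[\Phi_g]\cdot F\geq 2$ for $g\in Q\setminus\{1\}$ with the $J$-induced orientations. Neither point is a gap.
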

 
 \begin{proof}
 Let $\omega,\omega^\prime\in \Omega(X,G)$, and let $\pi, \pi^\prime$ be symplectic 
 $G$-conic bundles with fiber class $F$, and let $Q,Q^\prime$ be the subgroups of $G$
 defined using $\pi,\pi^\prime$ respectively. Let $H,E_1,\cdots,E_N$ and 
 $H^\prime,E_1^\prime,\cdots,E_N^\prime$ be a reduced basis associated to $\pi,\pi^\prime$
 respectively.
 
  Let $Q_{(H,E_i)}=\{g\in G| g\cdot E_j=E_j \mbox{ or } H-E_1-E_j\}$. We claim $Q=Q_{(H,E_i)}$.
 First, it is clear that $Q\subset Q_{(H,E_i)}$. Secondly, if $g\in Q_{(H,E_i)}$, then $g$ leaves
 each singular fiber invariant. Since the number of singular fibers is $N-1$ which is greater
 than $3$, the induced action of $g$ on the base $\s^2$ has at least $3$ fixed points. This
 implies that the action of $g$ on the base must be trivial, and $g\in Q$. Hence $Q=Q_{(H,E_i)}$.
 Similarly, $Q^\prime=Q_{(H^\prime,E_i^\prime)}$.
 
 If we normalize so that $\omega(F)=\omega^\prime(F)=2$, then for each $j=2,\cdots,N$,
 $\omega(E_j^\prime)=\omega^\prime(E_j)=1$ also. In particular,  
 $H^\prime,E_1^\prime,\cdots,E_N^\prime$ is a reduced basis for $(X,\omega)$. By
 Lemma \ref{l:structureLemma}, and the fact that $\omega$ is not monotone, we have,
 for each $j>1$, $E_j^\prime=E_k$
 or $H-E_1-E_k$ for some $k>1$. It follows that $H^\prime-E_1^\prime-E_j^\prime=
 H-E_1-E_k$ or $E_k$ for the same $k$. From these relations we see immediately that
 $Q_{(H,E_i)}=Q_{(H^\prime,E_i^\prime)}$. Hence $Q=Q^\prime$.
  
 \end{proof}


\begin{thebibliography}{}

\bibitem{BP} O. Buse and M. Pinsonnault. \textit{Packing numbers of rational ruled four-manifolds}, Journal of Symplectic Geometry \textbf{11.2} (2013): 269-316.

\bibitem{C1} W. Chen, {\em Orbifold adjunction formula and symplectic cobordisms 
                  between lens spaces}, Geometry and Topology  {\bf 8} 
                  (2004), 701-734.

\bibitem{C2} W. Chen, {\em Smooth $s$-cobordisms of elliptic $3$-manifolds}, 
Journal of Differential Geometry {\bf 73} no.3 (2006), 413-490.

\bibitem{C3} W. Chen, {\em Group actions on $4$-manifolds: 
               some recent results and open questions}, Proceedings of 
              the Gokova Geometry-Topology Conference 2009, Akbulut, S. 
                et al ed., pp. 1-21, International Press, 2010. 
                
\bibitem{C4} W. Chen, {\em $G$-minimality and invariant negative spheres in 
$G$-Hirzebruch surfaces},  Journal of Topology {\bf 8} (2015), 621-650.

                
\bibitem{DI} I.V. Dolgachev and V.A. Iskovskikh, {\em Finite subgroups of the plane
Cremona group}, in Algebra, arithmetic, and geometry: in honor of Yu. I. Manin. Vol. I, 443-548, 
Progr. Math., {\bf 269}, Birkhäuser Boston, Inc., Boston, MA, 2009. 


\bibitem {Ed} A.L. Edmonds, {\em Aspects of group actions on four-manifolds},
                          Topology and its Applications {\bf 31} (1989),
                          109-124. 

\bibitem{FH} K. Frantzen and A. Huckleberry, {\em Finite symmetry groups in complex geometry}, 
Revue de l'Institut Elie Cartan Nancy {\bf 19} (2009), 73-113.

\bibitem{KK} Y. Karshon and L. Kessler, {\em Distinguishing symplectic blowups of the complex projective plane}, 
arXiv:1407.5312v1 [math.SG] 20 Jul 2014, to appear in the Journal of Symplectic Geometry.

\bibitem{LL} B.-H. Li and T.-J. Li, {\em Symplectic genus, minimal genus and diffeomorphisms}, 
Asian J. Math. {\bf 6} (2002), no. 1, 123-144.

\bibitem{LW12} T.-J.~Li, W.~Wu, {\em Lagrangian spheres, symplectic surfaces and the
symplectic mapping class group}, Geom. Topol., 16(2):1121-1169, 2012.

\bibitem{MS09} D.~McDuff, F.~Schlenk, {\em The embedding capacity of 4-dimensional symplectic ellipsoids}, 
Ann. of Math. (2) 175 (2012), no. 3, 1191-1282. 

\end{thebibliography}
\end{document}